\documentclass[reqno]{amsart}

\usepackage{amsmath,amssymb,amsthm,mathtools}
\usepackage{longtable,booktabs,array}
\usepackage[hidelinks]{hyperref}
\hypersetup{
  pdftitle={Powers of the Thue--Morse Series: 2-Adic Valuations and Automatic Odd Parts},
  pdfauthor={Zhao Shen},
  pdfsubject={2-adic valuations and automatic sequences},
  pdfkeywords={Thue--Morse sequence, automatic sequence, 2-adic valuation,
    Mahler functional equation}
}

\newtheorem{theorem}{Theorem}[section]
\newtheorem{proposition}[theorem]{Proposition}
\newtheorem{lemma}[theorem]{Lemma}

\newtheorem{conjecture}[theorem]{Conjecture}

\newtheorem{remark}[theorem]{Remark}

\newcommand{\repthmname}{}
\newtheorem*{repthm}{\repthmname}
\newenvironment{retheorem}[1]{%
  \renewcommand{\repthmname}{Theorem~\ref{#1}}%
  \begin{samepage}\begin{repthm}%
}{\end{repthm}\end{samepage}}

\begin{document}

\title[Powers of the Thue--Morse Series]{Powers of the Thue--Morse Series: \texorpdfstring{$2$}{2}-Adic Valuations and Automatic Odd Parts}

\author{Zhao Shen}

\address{Department of Mathematics, Central South University}
\email{sz1021@csu.edu.cn}

\subjclass[2020]{Primary 11B85; Secondary 11A07, 11B50}
\keywords{Thue--Morse sequence, automatic sequence, $2$-adic valuation,
Mahler functional equation, binomial coefficient}

\begin{abstract}
Let
\[
T(x)=\prod_{j\ge 0}(1-x^{2^j})=\sum_{n\ge 0} t(n)x^n
\]
be the Thue--Morse generating function, and write
$T(x)^m=\sum_{n\geq0}t_m(n)x^n$ for a positive integer $m$.  For a
nonzero integer $a$, let
$\nu_2(a)$ be the exponent of $2$ in $a$ and put
$\operatorname{odd}(a)=a/2^{\nu_2(a)}$.  For every $s\geq1$, we prove
that the sequence
\[
\bigl(\operatorname{odd}(t_m(n))\bmod 2^s\bigr)_{n\geq0}
\]
is $2$-automatic when $m=2^r$, $r\geq1$, or $m=3\cdot2^r$, $r\geq2$.
In both families,
\[
\nu_2(t_m(n))=\nu_2\binom{n+m-1}{m-1}.
\]
The valuation identity for $m=2^r$ is known; our proof recovers it and also
establishes the automaticity assertion.  For $m=6$ we prove
\[
\nu_2(t_6(n))=\nu_2\binom{n+5}{5}+\mathbf{1}_{n\equiv 3\!\!\pmod 4},
\]
where $\mathbf 1_{\mathcal E}$ is $1$ if the statement $\mathcal E$ holds
and is $0$ otherwise, and show that its odd parts modulo every $2^s$ are
$2$-automatic.
\end{abstract}

\maketitle

\section{Introduction}

The Thue--Morse sequence is defined by
\[
t(0)=1, \qquad t(2n)=t(n), \qquad t(2n+1)=-t(n)
\qquad (n\ge 0),
\]
and its generating function is
\[
T(x)=\sum_{n\ge0}t(n)x^n=\prod_{j\ge0}(1-x^{2^j}).
\]
We refer to \cite{AllSh} for background on this sequence.
For each integer $m\ge1$, we write
\begin{equation}\label{eq:001}
T(x)^m=\sum_{n\ge0}t_m(n)x^n.
\end{equation}

For a nonzero integer $a$, the $2$-adic valuation $\nu_2(a)$ is the
largest integer $v$ such that $2^v$ divides $a$; we put
$\nu_2(0)=+\infty$.  The signed odd part of $a$ is
\[
\operatorname{odd}(a)=\frac{a}{2^{\nu_2(a)}}\quad(a\ne0),
\qquad \operatorname{odd}(0)=0.
\]
If $\mathcal E$ is a statement, then $\mathbf1_{\mathcal E}$ is $1$ when
$\mathcal E$ is true and $0$ otherwise.  We write
$\mathbb F_2=\mathbb Z/2\mathbb Z$.

A finite automaton has finitely many states and updates its state as it reads
input digits.  A sequence $(a_n)_{n\ge0}$ with values in a finite set is
$2$-automatic if such an automaton, reading the binary expansion of $n$
from the most significant digit, outputs $a_n$.  Equivalently, its
$2$-kernel
\[
\left\{(a_{2^e n+c})_{n\ge0}:e\ge0,\ 0\le c<2^e\right\}
\]
is finite \cite[Theorem~6.6.2]{AllSh1}.  An integer sequence is
$2$-regular if the $\mathbb Z$-module generated by its $2$-kernel is
finitely generated \cite{AlloucheShallitRegular}.

The functional equation
\begin{equation}\label{eq:002}
T(x)=(1-x)T(x^2)
\end{equation}
gives recurrences for the even- and odd-indexed coefficients of $T(x)^m$.
Gawron, Miska, and Ulas \cite{GawronMiskaUlas} used these recurrences to
study the arithmetic of $t_m(n)$.  In particular, for $m=2^r$ with
$r\ge1$, they proved that
\begin{equation}\label{eq:003}
\nu_2(t_m(n))=\nu_2\binom{n+m-1}{m-1}
\qquad(n\ge0).
\end{equation}
They also showed that the valuation sequence is $2$-regular
\cite[Corollary~3.4]{GawronMiskaUlas}.  Thus \eqref{eq:003} determines the
exact power of $2$ dividing $t_m(n)$ in this family.  Our focus is the odd
factor that remains after this power is removed.

For powers of more general integer-coefficient series, Ulas
\cite{UlasValuations} gave sufficient conditions for bounded $2$-adic
valuations, including applications to the Rudin--Shapiro sequence.
Our question concerns odd parts in families where the valuations are
unbounded.  For each fixed $m$ and $L\ge1$, the sequence
$(t_m(n)\bmod2^L)_{n\ge0}$ is $2$-automatic by the standard closure
properties of regular sequences; see Lemma~\ref{lem:04}.  This does not
settle the corresponding question for odd parts.  When $t_m(n)\ne0$,
determining $\operatorname{odd}(t_m(n))\bmod2^s$ requires the residue of
$t_m(n)$ modulo $2^{\nu_2(t_m(n))+s}$, once the valuation is known.
The required precision can therefore grow with $n$.

Appendix~\ref{app:02} shows that this gap is genuine in a broader setting.
It constructs a $2$-automatic sequence $(a_n)_{n\ge0}$ with values in
$\{-1,1\}$ such that, on writing
\[
\left(\sum_{n\ge0}a_nx^n\right)^4=\sum_{n\ge0}c(n)x^n,
\]
every $c(n)$ is nonzero, while
$(\operatorname{odd}(c(n))\bmod4)_{n\ge0}$ is not $2$-automatic.  Thus
$2$-automaticity of the original coefficient sequence does not, in general,
pass to the odd parts of the coefficients of a power.  The sequence
$(c(n))_{n\ge0}$ is $2$-regular, so $(c(n)\bmod2^L)_{n\ge0}$ is
$2$-automatic for every fixed $L$.

For the Thue--Morse series, we prove that the odd parts are $2$-automatic
modulo every fixed power of $2$ for two infinite families of even exponents
and for $m=6$.  The first theorem includes a new proof of the known
valuation formula \eqref{eq:003}.

\begin{theorem}\label{thm:pow2}
Let $r\ge1$ and put $m=2^r$.  For every $n\ge0$,
\[
\nu_2(t_m(n))=\nu_2\binom{n+m-1}{m-1}.
\]
Moreover, for every $s\ge1$, the sequence
\[
\bigl(\operatorname{odd}(t_m(n))\bmod2^s\bigr)_{n\ge0}
\]
is $2$-automatic.
\end{theorem}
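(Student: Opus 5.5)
The plan is to work directly with the Mahler equation \eqref{eq:002} raised to the power $m=2^r$,
\[
T(x)^m=(1-x)^mT(x^2)^m,
\qquad\text{so}\qquad
t_m(n)=\sum_{k}(-1)^{n-2k}\binom{m}{n-2k}\,t_m(k).
\]
This expresses $t_m(2n+c)$, for $c\in\{0,1\}$, as a fixed finite linear combination of $t_m(n),t_m(n-1),\dots,t_m(n-2^{r-1})$, with binomial coefficients $\binom{2^r}{j}$ as weights. I will use the standard fact $\nu_2\binom{2^r}{j}=r-\nu_2(j)$ for $0<j<2^r$ to control the $2$-adic sizes of these weights. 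Write $v(n)=\nu_2\binom{n+m-1}{m-1}$. By Kummer's theorem, $v(n)$ is the number of carries when adding $n$ and $2^r-1$ in base $2$. It therefore depends only on the last $r$ binary digits of $n$ together with the length of the block of $1$'s immediately above them, and one can write down $v(2n+c)-v(n-i)$ explicitly for each $0\le i\le 2^{r-1}$.

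The first step is the valuation identity, proved by strong induction on $n$. Assuming $\nu_2(t_m(k))=v(k)$ for all $k<N$, I will show that in the recurrence for $t_m(N)$ exactly one term has minimal valuation $v(N)$ and all other terms are strictly larger. The natural candidate for the dominant term is the one where the carry structure matches, i.e.\ the term $\binom{m}{j}t_m(k)$ with $N=2k+j$ for which $r-\nu_2(j)+v(k)$ is smallest. The inequality to check is of the form $r-\nu_2(j)+v(k)\ge v(2k+j)$, with equality for a unique $j$. This is a purely combinatorial statement about binary carries, and I expect it to reduce to a short case analysis on the residue of $N$ modulo $2^{r+1}$ and the run of trailing ones. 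For $r=1$ it reduces to $t_2(2n)=t_2(n)-2t_2(n-1)+\dots$-type identities, which I will check by hand first as a template.

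The second step is automaticity of the odd parts, and here I strengthen the induction. Dividing the recurrence by $2^{v(N)}$ gives
\[
\operatorname{odd}(t_m(N))=\sum_{j}\pm\frac{\binom{m}{j}\,2^{v(k_j)}}{2^{v(N)}}\operatorname{odd}(t_m(k_j)),
\qquad N=2k_j+j.
\]
Each coefficient is a $2$-adic number of valuation $\ge 0$ depending only on $j$ and on the carry data of $N$. Modulo $2^s$, only terms whose excess valuation is $<s$ matter. The excess valuation grows with the length of the run of $1$'s, so once that run exceeds roughly $s+r$ the extra terms vanish modulo $2^s$. Consequently the tuple
\[
\Bigl(\operatorname{odd}(t_m(n-i))\bmod 2^s,\ \min\bigl(v(n-i)-v(n),\,s\bigr)\Bigr)_{0\le i\le 2^{r-1}}
\]
takes finitely many values. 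Its value at $2n+c$ is a fixed function of its value at $n$ and of $c$. Reading $n$ from the most significant digit gives the automaton, or equivalently a finite $2$-kernel.

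I expect the main obstacle to be the carry bookkeeping. Specifically, one must show that the relative valuations $v(n-i)-v(n)$, truncated at $s$, also propagate through $n\mapsto 2n+c$ using only finitely much information, since the differences $v(n-i)-v(n)$ themselves are unbounded. I will handle this by tracking the truncated differences directly and verifying, via Kummer's theorem, that the truncated difference at $2n+c$ depends only on the truncated differences at $n$ together with the new digit $c$.
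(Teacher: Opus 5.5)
\paragraph{The valuation step.} Your induction rests on the claim that exactly one term of the recurrence attains the minimal valuation. That claim is false. Already for $m=4$,
\[
t_4(6)=t_4(3)+\binom{4}{2}t_4(2)+t_4(1)=12+12-4=20,
\]
and all three terms have valuation $2=\nu_2\binom{9}{3}$. This is systematic. Suppose $n+h=2^\alpha u$ with $u$ odd, $\alpha\ge r+1$ and $h$ even. Then the terms $\pm\binom{m}{2j-h}t_m(2^{\alpha-1}u-j)$ of minimal valuation are exactly those with $j\in\{h/2,\,m/2,\,(m+h)/2\}$. These are the three odd entries $e_{h/2}+e_{m/2}+e_{(m+h)/2}$ in the even rows of $N\bmod 2$ in Lemma~\ref{lem:08}. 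The induction survives only if you prove that the number of minimal terms is always odd (one or three). An odd number of odd multiples of $2^e$ is an odd multiple of $2^e$, so that suffices. This parity statement is the real content of the valuation formula, and your case analysis targets a false statement instead.

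\paragraph{The automaticity step.} First, the proposed state is not finite. The differences $v(n-i)-v(n)$ are unbounded below: for $n=mQ+i$ with $1\le i\le 2^{r-1}$ one has $v(n-i)=0$ but $v(n)=r+\nu_2(Q+1)-\nu_2(i)$. Truncating above at $s$ therefore does not help. Also, for $r\ge2$ the window $0\le i\le 2^{r-1}$ is not closed under $n\mapsto 2n+c$, because $t_m(2n-2^{r-1})$ involves $t_m(n-3\cdot 2^{r-2})$.

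Second, and more seriously, your reason for finiteness fails exactly in the critical regime. You argue that terms of large excess drop out modulo $2^s$ once the run of $1$'s is long. But if $n=2^\alpha u-h$ with $\alpha$ large, each term above has valuation $\alpha-w_h+\nu_2(N_{h,j})$. So the excesses are the bounded numbers $\nu_2(N_{h,j})$, independent of $\alpha$, and nothing drops out. Then $\operatorname{odd}(t_m(n))\bmod 2^s$ depends on all $m-1$ odd parts one level down, and so on through $\alpha$ levels. Meanwhile your truncated differences $v(2^{\alpha-1}u-i)-v(2^{\alpha-1}u)=\alpha-1-\nu_2(i)$ all saturate at $s$, so the needed excesses cannot be read off from your state.

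\paragraph{What is missing.} In this regime the odd parts evolve under a fixed integer matrix. With $D=\operatorname{diag}(2^{\nu_2(i)})$, the matrix $N=\frac12 DAD^{-1}$ is integral, and exactly
\[
t_m(n)=2^{\alpha-w_h}(N^\alpha D\mathcal V_u)_h .
\]
Oddness of the last factor follows from $N^\alpha\equiv\mathbf J\pmod 2$ for $\alpha\ge r$, together with the oddness of $\sum_j 2^{w_j}t_m(u-j)$. Automaticity then follows from eventual periodicity of $N^\alpha\bmod 2^s$ and Lemma~\ref{lem:05} applied to $\nu_2(q+1)$ and $\operatorname{odd}(q+1)$.

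A sliding-window automaton could be made to work. It would first need the odd-tie structure, and it would have to record valuations as a common unbounded parameter plus a bounded offset. That amounts to redoing this matrix analysis.
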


For $m=3\cdot2^r$ with $r\ge2$, we obtain the same valuation formula.

\begin{theorem}\label{thm:val3}
Let $r\ge2$ and put $m=3\cdot2^r$.  For every $n\ge0$,
\[
\nu_2(t_m(n))=\nu_2\binom{n+m-1}{m-1}.
\]
\end{theorem}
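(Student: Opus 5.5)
We use the factorization $T(x)^{3\cdot2^r}=T(x)^{2^r}\cdot T(x)^{2^{r+1}}$, or more usefully the functional equation \eqref{eq:002}: for $m=3\cdot 2^r$,
\[
T(x)^m=(1-x)^m\,T(x^2)^m .
\]
Extracting even and odd coefficients gives
\[
t_m(2n)=\sum_k \binom{m}{2k}t_m(n-k),\qquad
t_m(2n+1)=-\sum_k\binom{m}{2k+1}t_m(n-k).
\]
The plan is induction on $n$, comparing $\nu_2$ of each summand with the target $V(N):=\nu_2\binom{N+m-1}{m-1}$. By Kummer's theorem, $V(N)$ counts carries when adding $N$ and $m-1$ in base $2$. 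The cleanest organization is to write
\[
(1-x)^m=(1-x)^{2^r}(1-x)^{2^{r+1}}\equiv (1-x^{2^r})(1-x^{2^{r+1}})\pmod 2
\]
and to control higher $2$-adic corrections. Concretely, we write $(1-x)^{2^r}=1-x^{2^r}+2P_r(x)$, where the coefficients of $2P_r$ have valuation $r-\nu_2(j)$ at $x^j$ for $0<j<2^r$.

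The key step is to show that, in each recurrence, exactly one term attains the minimal valuation $V(N)$ and all others are strictly larger. Assume the formula for smaller indices. The summand $\binom{m}{j}t_m(n-k)$ has valuation $\nu_2\binom{m}{j}+V(n-k)$. Using Kummer's theorem on both factors, we compare
\[
\nu_2\binom{m}{j}+V(n-k)\quad\text{with}\quad V(2n+\epsilon),
\]
where $m-1=2^{r+1}+2^r-1$ has binary expansion $1011\cdots1$ (the block of $r$ ones at the bottom). Carries of $N+(m-1)$ occur in the low $r$ bits unless those bits of $N$ are zero, and then propagate through positions $r$ and $r+1$. We will split into cases according to $N \bmod 2^{r+2}$. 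In each case we identify the dominant index $j$, which should be $j=0$ or $j=2^r$, $2^{r+1}$, or $3\cdot 2^r$, where $\binom{m}{j}$ is odd or has small valuation. We then verify a strict inequality for all other $j$. That inequality rests on the fact that moving $k$ back reduces the carry count by at most $\nu_2$-type amounts, which are dominated by $\nu_2\binom{m}{j}\ge r+1-\nu_2(j)$ for $j$ not a multiple of $2^r$.

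The main obstacle is the case $j\in\{2^r,2^{r+1}\}$, where $\binom{m}{j}$ has valuation $1$ or $0$ ($\binom{3\cdot2^r}{2^r}$ is odd and $\binom{3\cdot 2^r}{2^{r+1}}$ is odd too). Several terms can then tie at the minimal valuation, and parity cancellation between two of them could raise the valuation. This is exactly what happens for $m=6$ ($r=1$), where the extra $\mathbf 1_{n\equiv3\pmod4}$ appears. For $r\ge2$ we will try to show that among the odd-binomial indices $j\in\{0,2^r,2^{r+1},3\cdot2^r\}$, the shifts $k$ differ by at least $2^{r-1}\ge2$. Since $V(n-k)$ depends on the low bits of $n-k$, only one of them achieves the minimum. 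If a genuine tie remains in some residue class, we would pass to the odd parts modulo $4$ and use the inductive knowledge of $\operatorname{odd}(t_m(\cdot))\bmod 4$ on those classes, or alternatively the factorization $T^{m}=T^{2^r}T^{2^{r+1}}$ together with Theorem~\ref{thm:pow2}, to rule out cancellation. Base cases $n<2^{r+2}$ would be checked directly from the recurrence.
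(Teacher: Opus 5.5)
Your key step, that in each one-step recurrence exactly one summand $\binom{m}{j}t_m(n-k)$ attains valuation $V(N)$ and all others are strictly larger, is false already for $r=2$. For $m=12$ one has $t_{12}(0)=1$, $t_{12}(1)=-12$ and $t_{12}(2)=54$, and the odd-index recurrence gives
\[
t_{12}(5)=-\bigl(12\,t_{12}(2)+220\,t_{12}(1)+792\,t_{12}(0)\bigr)=-(648-2640+792)=1200 .
\]
Here $V(5)=\nu_2\binom{16}{11}=4=\nu_2(1200)$. But the summands $648$ and $792$, coming from $j=1$ and $j=5$, both have valuation $3<V(5)$. They cancel to $1440=2^5\cdot45$, and the correct valuation comes from the $j=3$ term. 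So summands outside your ``dominant'' set $\{0,2^r,2^{r+1},3\cdot2^r\}$ can fall strictly below the target, and the tie is not of the type you anticipated. The source of the error is your bound $\nu_2\binom{m}{j}\ge r+1-\nu_2(j)$, which is off by one. For $1\le j\le2^r$ and for $2^{r+1}<j\le3\cdot2^r$, Lucas' theorem applied to $\binom{m}{j}=\frac mj\binom{m-1}{j-1}$ gives exactly $\nu_2\binom{m}{j}=r-\nu_2(j)$; for example $\binom{12}{1}=12$ has valuation $2$, not at least $3$.

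Once cancellation below the target valuation is unavoidable, comparing valuations term by term cannot close the induction. Your fallback is not an argument either. Your induction hypothesis records only valuations, and deciding the outcome of such a cancellation needs the odd parts of the inputs to extra precision. In the example, $\operatorname{odd}(t_{12}(5))\bmod4$ already depends on $\operatorname{odd}(t_{12}(2))$ and $\operatorname{odd}(t_{12}(0))$ modulo $8$. So a hypothesis about odd parts modulo $4$ does not reproduce itself, and you have not identified a statement that does.

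The missing idea is an invariant that avoids division altogether. The paper rescales the whole window $\mathcal V_n$ by $D=\operatorname{diag}(2^{w_i})$ with $w_i=\nu_2\binom{m-2}{i-1}$. It proves that the two-step matrix $M=\tfrac14DA^2D^{-1}$ is integral with $M^k\equiv\mathbf J\pmod2$ for $k\ge\lceil r/2\rceil+1$. This reduces the parity of the relevant coordinate to the boundary sums $\mathcal S(u)\equiv1\pmod2$ and $\mathcal S(2u)\equiv2\pmod4$ (Lemmas~\ref{lem:12} and~\ref{lem:13}). The indices with $b(n)\le r+2$, where the matrix has not yet stabilized, are handled separately. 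There the paper writes $T(x)^m=(1+x)^{-m}\bigl((1-x^2)T(x^2)\bigr)^m$ and expands the second factor modulo $2^{r+3}$ (Lemmas~\ref{lem:14}--\ref{lem:17}).

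Finally, ``checking the base cases $n<2^{r+2}$ directly'' is not a finite computation, since $r$ ranges over all integers $\ge2$.
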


The odd parts in this family satisfy the following additional property.

\begin{theorem}\label{thm:odd3}
Let $r\ge2$, put $m=3\cdot2^r$, and let $s\ge1$.  Then the sequence
\[
\bigl(\operatorname{odd}(t_m(n))\bmod2^s\bigr)_{n\ge0}
\]
is $2$-automatic.
\end{theorem}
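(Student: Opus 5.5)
The plan is to reduce automaticity of the odd parts to a finiteness statement about rescaled kernel sequences, using Theorem~\ref{thm:val3} to control the valuations. By Kummer's theorem, $\nu_2\binom{n+m-1}{m-1}$ is the number of carries when $n$ and $m-1$ are added in base $2$. Since $m-1$ has a fixed binary expansion, this quantity is unbounded only through a single long carry chain. Concretely, fix $K$ with $2^K>m$. For each residue $c\bmod 2^K$, either $\nu_2\binom{n+m-1}{m-1}$ is constant on the class $n\equiv c\pmod{2^K}$, or there is a unique $j\in\{1,\dots,m-1\}$ with $c+j\equiv0\pmod{2^K}$. In the second case the valuation equals $\kappa_j+\nu_2(n+j)$ for an explicit constant $\kappa_j$.

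On the bounded classes, $\operatorname{odd}(t_m(n))\bmod 2^s$ is a function of $t_m(n)\bmod 2^{B+s}$ for a fixed $B$. That sequence is $2$-automatic by Lemma~\ref{lem:04}, and restricting to a residue class and interleaving preserve automaticity. So the whole difficulty lies in the classes $n=2^e q-j$ with $q$ odd and $e\ge K$. For these I consider the rescaled subsequences
\[
u_{e,j}(q)=\frac{t_m(2^e q-j)}{2^{\kappa_j+e}}\bmod 2^s,
\]
viewed as sequences in $q$ (or in $k$ with $q=2k+1$). Every element of the $2$-kernel of $\bigl(\operatorname{odd}(t_m(n))\bmod 2^s\bigr)$ is obtained by interleaving finitely many pieces of two kinds. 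The first kind are automatic pieces from the bounded classes. The second kind are the sequences $u_{e,j}$, taken on odd $q$ and further decimated. Hence it suffices to show that, for each $j$, the family $\{u_{e,j}:e\ge K\}$ is finite, and that each member is $2$-automatic.

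To get relations between consecutive levels, I iterate \eqref{eq:002} to obtain
\[
T(x)^m=(1-x)^m\,T(x^2)^m .
\]
This expresses $t_m(2^{e+1}q-j)$ as a fixed $\mathbb Z$-linear combination of the values $t_m(2^{e}q-j')$, where the $j'$ range over a bounded window and the coefficients come from $(1-x)^m$. After dividing by the appropriate power of $2$, this yields a recursion $u_{e+1,\cdot}=A\,u_{e,\cdot}+(\text{terms of higher valuation})$, with $A$ a fixed matrix acting on the vector $(u_{e,j})_j$. Because $m=3\cdot2^r$ with $r\ge2$, I use $(1-x)^m\equiv(1-x^4)^{3\cdot 2^{r-2}}$ modulo a power of $2$. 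Together with Theorem~\ref{thm:pow2} applied to $T^{2^r}$ via $T^m=(T^{2^r})^3$, this should show that the error terms are divisible by $2^s$ once $e$ exceeds a threshold depending only on $s$. Granting that, the vector $(u_{e,j})_j$ modulo $2^s$ evolves under a fixed linear map on a finite set, so it is eventually periodic in $e$ and the family is finite. Each $u_{e,j}$ is automatic, since for fixed $e$ it is determined by $t_m\bmod 2^{\kappa_j+e+s}$, which is automatic by Lemma~\ref{lem:04}. Finiteness of the kernel then follows.

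I expect the main obstacle to be the stabilization step: proving that the error terms vanish modulo $2^s$ uniformly in $e$. This requires knowing that each neighbouring value $t_m(2^eq-j')$ with $j'\ne j$ has valuation exceeding $\kappa_j+e+s$ by a margin that grows with $e$, or else that it cancels. My first attempt is to strengthen Theorem~\ref{thm:val3} inductively to a congruence of the form
\[
t_m(n)\equiv \binom{n+m-1}{m-1}\,w(n)\pmod{2^{\nu_2\binom{n+m-1}{m-1}+s}},
\]
with $w$ a $2$-automatic odd-valued sequence depending on $s$. I would prove this by induction on $n$ through the even/odd recurrences, reducing it to the corresponding statement for $m=2^r$ established alongside Theorem~\ref{thm:pow2}.
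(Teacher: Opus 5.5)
\textbf{Verdict: genuine gap.} The stabilization step, which carries the whole argument, rests on a false premise, and the missing ingredient is an integral two-step rescaled recursion.

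\textbf{What works.} Your reduction to the classes $n=2^eq-j$ ($q$ odd, $e\ge K$) is correct, and your constants can be identified. One has $\kappa_j=-w_j$ with $w_j=\nu_2\binom{m-2}{j-1}$. Hence $u_{e,\cdot}(q)=2^{-e}D\mathcal V_{2^eq}$, where $D=\operatorname{diag}(2^{w_1},\ldots,2^{w_{m-1}})$ and $\mathcal V_N=(t_m(N-1),\ldots,t_m(N-m+1))^T$.

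\textbf{Where it breaks.} Since $\mathcal V_{2N}=A\mathcal V_N$ exactly (Lemma~\ref{lem:02}), dividing by the predicted powers gives the \emph{exact} identity $u_{e+1,\cdot}=\tfrac12DAD^{-1}u_{e,\cdot}$. Its coefficients do not depend on $e$.
\begin{itemize}
\item There are no error terms that shrink as $e$ grows. A neighbour $t_m(2^eq-j')$ has valuation $e-w_{j'}$, a fixed offset from the main term, so no margin grows with $e$.
\item The real issue is integrality, and it fails here. For $m=3\cdot2^r$ the $(1,2^r)$ entry of $\tfrac12DAD^{-1}$ has $2$-adic valuation $-1$; for $m=12$ it equals $-99/2$.
\item Consequently $u_{e,\cdot}\bmod 2^s$ determines $u_{e+1,\cdot}$ only modulo $2^{s-1}$. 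Nothing in your argument stops the denominators of the powers of this matrix from growing with $e$, which would destroy finiteness of $\{u_{e,\cdot}\bmod 2^s\}$.
\end{itemize}

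\textbf{Why the fallback does not help.}
\begin{itemize}
\item A congruence $t_m(n)\equiv\binom{n+m-1}{m-1}w(n)\pmod{2^{\nu_2\binom{n+m-1}{m-1}+s}}$ with $w$ automatic is essentially a reformulation of the theorem itself. Given Theorem~\ref{thm:val3}, it is equivalent because $\operatorname{odd}\binom{n+m-1}{m-1}\bmod 2^s$ is automatic. So it cannot serve as the lemma that proves the theorem.
\item Writing $T^m=(T^{2^r})^3$ does not carry odd-part information through the convolution (compare Appendix~\ref{app:02}).
\item The congruence $(1-x)^m\equiv(1-x^4)^{3\cdot2^{r-2}}$ holds only modulo a fixed small power of $2$. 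The precision you need, $2^{\nu_2\binom{n+m-1}{m-1}+s}$, is unbounded.
\end{itemize}

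\textbf{The missing idea.} Group two binary steps and prove that the rescaled two-step matrix $M=\tfrac14DA^2D^{-1}$ is integral. This is the substantive part of the paper: Lemma~\ref{lem:19}, resting on the exact coefficient valuations of $\bigl((1-x)^6(1+x)^3\bigr)^{2^r}$ in Lemma~\ref{lem:18}. (Note $M=(\tfrac12DAD^{-1})^2$, so this is exactly the statement that the one-step denominators do not accumulate.)

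Once integrality is available, $u_{e+2,\cdot}=Mu_{e,\cdot}$ holds over $\mathbb Z$. Thus $u_{K+2k,\cdot}=M^ku_{K,\cdot}$ and $u_{K+1+2k,\cdot}=M^ku_{K+1,\cdot}$. Eventual periodicity of $M^k\bmod 2^s$ then gives exactly the finiteness you want, and the rest of your kernel argument goes through.

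The paper expresses the same mechanism as $t_m(n)/2^{b(n)}=(M^kD\mathcal V_u)_h$ or $(M^kD\mathcal V_{2u})_h/2$ with $u$ odd, combined with Lemma~\ref{lem:05}. For $m=2^r$ the one-step matrix is already integral (Lemma~\ref{lem:08}), which is why this obstruction does not arise in that family.
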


The remaining case $m=6$ has a correction term in its valuation formula.
Its odd parts are also $2$-automatic modulo every fixed power of $2$.

\begin{theorem}\label{thm:six}
For every $n\ge0$,
\[
\nu_2(t_6(n))=\nu_2\binom{n+5}{5}
+\mathbf1_{n\equiv3\!\!\pmod4}.
\]
Moreover, for every $s\ge1$, the sequence
$\bigl(\operatorname{odd}(t_6(n))\bmod2^s\bigr)_{n\ge0}$ is $2$-automatic.
\end{theorem}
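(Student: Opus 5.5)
The proof will work directly with the functional equation \eqref{eq:002}. Write $F(x)=T(x)^6$, so that $F(x)=(1-x)^6F(x^2)$. The idea is to split $F$ as $T(x)^4\cdot T(x)^2$, or equivalently to use $(1-x)^6=(1-x)^4(1-x)^2$, and to organise everything around the binomial weights $\binom{n+5}{5}$.

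The first step is to factor out a rational part. Since $T(x)^6=(1-x)^6(1-x^2)^6(1-x^4)^6\cdots$, it is natural to compare $t_6(n)$ with the coefficients of $(1-x)^{-6}$, which are exactly $\binom{n+5}{5}$. This comparison is valid modulo high powers of $2$. Concretely, in $\mathbb Z_2[[x]]$ one has $(1-x^{2^j})^6\equiv(1-x^{2^{j+1}})^3\cdot(\text{unit})$ modulo $2$. The plan is to iterate the even/odd recurrences
\[
t_6(2n)=\sum_k c_{2k}\,t_6(n-k),\qquad t_6(2n+1)=\sum_k c_{2k+1}\,t_6(n-k),
\]
where $c_i$ are the coefficients of $(1-x)^6=1-6x+15x^2-20x^3+15x^4-6x^5+x^6$.

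Second, prove the valuation formula by strong induction on $n$, arguing separately for each residue class of $n$ modulo $4$, and more finely modulo $8$ where needed. In each class, express $t_6(n)$ through the recurrence as a $\mathbb Z$-combination of $t_6$ at about $n/2$ or $n/4$. Then show that exactly one term has minimal valuation. The induction hypothesis gives that minimal valuation as $\nu_2\binom{n'+5}{5}$ plus its correction, and Kummer-type identities for $\nu_2\binom{n+5}{5}$ (counting carries of $n$ with $5$) should reproduce the claimed exponent. The extra $1$ when $n\equiv3\pmod4$ should come from a cancellation: the two leading terms have equal valuation, and their odd parts sum to $2\pmod4$. I expect this to be the main obstacle. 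It requires carrying a stronger induction hypothesis that records $\operatorname{odd}(t_6(n))\bmod 4$, or even mod $8$, in each residue class. This is needed to certify that the cancellation gains exactly one factor of $2$ and no more. I would first compute the initial values numerically to guess these residue patterns, then strengthen the statement accordingly.

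Third, for automaticity, the goal is to show that $\operatorname{odd}(t_6(n))\bmod 2^s$ is determined by finitely much data propagated along the binary digits. From the recurrence, the dominant term (or the cancelling pair) determines the odd part. More precisely, $\operatorname{odd}(t_6(2n+\epsilon))\bmod2^s$ equals a fixed unit times $\operatorname{odd}(t_6(n'))\bmod 2^s$, plus contributions from the other terms. Those other terms have valuation larger by at least $s$, except for boundedly many relative shifts. The key quantitative claim is that the valuation differences between the terms are governed by local carry information. Consequently, the vector
\[
\bigl(\operatorname{odd}(t_6(n-k))\bmod 2^s,\ \min(\nu_2 t_6(n-k)-\nu_2 t_6(n),\,s)\bigr)_{0\le k\le K}
\]
for a fixed window $K$ takes finitely many values. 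Moreover, this vector transforms under $n\mapsto 2n+\epsilon$ by a fixed finite map. This yields a finite $2$-kernel, which is exactly the criterion \cite[Theorem~6.6.2]{AllSh1}. Here the valuation formula from the previous step is used to show that the truncated valuation differences are $2$-automatic, since $\nu_2\binom{n+5}{5}$ modulo capping depends only on low-order carry data.
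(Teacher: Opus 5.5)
\paragraph{Where the proposal breaks.}
Your valuation step assumes that each recurrence has a unique term of minimal valuation, with a cancellation occurring only at $n\equiv3\pmod4$. That premise is false.

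Take $n=4q+1$ with $q$ odd, and write $q+1=2^au$ with $u$ odd. The formula you are proving gives $\nu_2(3t_6(2q))=\nu_2(3t_6(2q-2))=a-1$ and $\nu_2(10t_6(2q-1))=a+1$. For $q=1$ this is $t_6(5)=-2\bigl(3t_6(2)+10t_6(1)+3t_6(0)\bigr)=-2(27-60+3)$. So the statement forces the neighbour congruence $\operatorname{odd}(t_6(2q))\equiv\operatorname{odd}(t_6(2q-2))\pmod4$, a relation between two coefficients rather than a property of either one.

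Certifying it through individual odd parts makes the required precision grow. For $a\ge2$, the recurrence $t_6(2q)=t_6(q)+15t_6(q-1)+15t_6(q-2)+t_6(q-3)$ again has a tied minimal pair, $15t_6(q-1)$ and $t_6(q-3)$, both of valuation $a-2$. For $q=3$ this is $t_6(6)=16+135-90+1$, where $135+1=2^3\cdot17$. Hence $\operatorname{odd}(t_6(2q))\bmod4$ already needs $\operatorname{odd}(t_6(q-1))$ and $\operatorname{odd}(t_6(q-3))$ modulo $8$, and so on down the binary expansion.

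There is also no residue-class pattern to guess. The values $t_6(13)=2^3\cdot(-147)$ and $t_6(45)=2^3\cdot(-997)$ have indices congruent modulo $32$, yet their odd parts are $1$ and $3$ modulo $4$. The same happens for $2^{a+2}-3$ and $3\cdot2^{a+2}-3$ for every even $a\ge2$. Your first step cannot help here, because $t_6(n)\equiv(-1)^n\binom{n+5}{5}$ holds only modulo $4$.

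The same defect breaks your automaticity step. When a tied pair cancels with gain $g$, the new odd parts modulo $2^s$ depend on the old ones modulo $2^{s+g}$. So your window of odd parts modulo $2^s$ is not carried to itself by a fixed finite map. Moreover, the valuation differences in the window are unbounded below: $\nu_2(t_6(2^k-6))-\nu_2(t_6(2^k-5))=-(k+1)$ for $k\ge3$. Capping them above by $s$ therefore still leaves infinitely many states.

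\paragraph{The missing idea.}
You need a normalization in which the recurrence is exact and no division is ever performed.

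\begin{itemize}
\item Take $\mathcal V_n=(t_6(n-1),\dots,t_6(n-5))^T$, $D=\operatorname{diag}(1,4,2,4,1)$ and $Y_v=D\mathcal V_v$. The two-step matrix satisfies $DA^2=4KD$ with $K$ integral, so $Y_{4^jv}=4^jK^jY_v$.
\item Choose $h\in\{1,\dots,5\}$ from $n$ modulo $4$ (modulo $8$ when $n\equiv3\pmod4$) so that $n+h=2^{2j+\varepsilon}u$ with $u$ odd and $\varepsilon\in\{0,1\}$. The $h$-th coordinate then gives exactly $t_6(n)=2^{2j-w_h}(K^jY_{2^\varepsilon u})_h$.
\item All the neighbour congruences you would have to track are encoded in two facts. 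The first is the stabilization $K^j\equiv H\pmod4$ for $j\ge2$, with a companion congruence for $K^jDAD^{-1}$ modulo $8$ when $\varepsilon=1$. The second is that the coordinate sum of $Y_u$ is odd for odd $u$.
\item These facts yield the valuation. The extra factor of $2$ appears exactly for $h\in\{1,5\}$, that is, for $n\equiv3\pmod4$.
\item For automaticity you never iterate on odd parts. The odd part is this exact coordinate divided by at most $4$. It is therefore a function of $K^j\bmod2^{s+2}$, which is eventually periodic in $j$ (and $j$ depends on $q$ only through $\nu_2(q+1)$). It is also a function of $Y_u$ or $Y_{2u}$ modulo $2^{s+2}$ with $u=\operatorname{odd}(q+1)$. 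Both inputs are automatic by Lemmas~\ref{lem:04} and~\ref{lem:05}.
\end{itemize}
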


Apart from the valuation identity for $m=2^r$, the valuation results and
all the odd-part automaticity statements in
Theorems~\ref{thm:pow2}--\ref{thm:six} are, to the best of the author's
knowledge, new.  The valuation formulas also show that none of the
coefficients in these families vanishes.

In Section~\ref{sec:07}, we formulate two conjectures.  One predicts
automatic odd parts for every positive exponent $m$.  The other gives a
criterion for the even exponents satisfying \eqref{eq:003} at every
index.  The criterion is expressed in terms of blocks of consecutive
$1$'s in the binary expansion of an integer determined by $m$.
The conjectures are motivated by the proved cases and exploratory
numerical computations; no theorem depends on these experiments.

Section~\ref{sec:01} collects the recurrences and automaticity lemmas.
Section~\ref{sec:03} proves Theorem~\ref{thm:pow2}.
Theorems~\ref{thm:val3} and~\ref{thm:odd3} are proved in
Section~\ref{sec:05}, and Section~\ref{sec:04} proves
Theorem~\ref{thm:six}.

\section{Recurrences and automaticity lemmas}
\label{sec:01}

The functional equation and its $m$-th power are
\[
T(x)=(1-x)T(x^2),
\qquad
T(x)^m=(1-x)^mT(x^2)^m.
\]
Throughout this section, and whenever a recurrence reaches a negative
index, we use the convention $t_m(a)=0$ for $a<0$.  Comparing even and odd
coefficients gives the recurrences of
\cite[Lemma~3.1]{GawronMiskaUlas}:
\begin{equation}\label{eq:004}
t_m(2n)=\sum_{j=0}^{\lfloor m/2\rfloor}
\binom{m}{2j}t_m(n-j)
\end{equation}
and
\begin{equation}\label{eq:005}
t_m(2n+1)=-\sum_{j=0}^{\lfloor(m-1)/2\rfloor}
\binom{m}{2j+1}t_m(n-j).
\end{equation}

We also record the binary digit-sum form of the binomial valuation.  Let
$s_2(a)$ denote the number of $1$'s in the binary expansion of $a$.  The
following standard identity is the binary case of Legendre's formula; see
also \cite[proof of Theorem~3.3]{GawronMiskaUlas}.

For $m\geq1$ and $n\geq0$,
\begin{equation}\label{eq:006}
\nu_2\binom{n+m-1}{m-1}
=s_2(n)+s_2(m-1)-s_2(n+m-1).
\end{equation}

For the two infinite families considered below, it is useful to apply all
shifted versions of these recurrences simultaneously.  The following vector
formulation will be used with two different diagonal rescalings.

\begin{lemma}\label{lem:02}
Fix $m\geq2$ and put
\[
\mathcal V_n=
\bigl(t_m(n-1),t_m(n-2),\ldots,t_m(n-m+1)\bigr)^T,
\]
where the superscript $T$ denotes transpose.  Let $A$ be the $(m-1)\times(m-1)$ integer matrix defined by
\begin{equation}\label{eq:007}
A_{ij}=(-1)^i\binom{m}{2j-i}
\qquad(1\leq i,j\leq m-1),
\end{equation}
where the binomial coefficient is zero when its lower index lies outside
$[0,m]$.  Then
\begin{equation}\label{eq:008}
\mathcal V_{2n}=A\mathcal V_n
\qquad(n\geq0).
\end{equation}
Consequently,
\[
\mathcal V_{2^a n}=A^a\mathcal V_n
\qquad(a,n\geq0).
\]
\end{lemma}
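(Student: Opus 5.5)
The plan is to verify \eqref{eq:008} one coordinate at a time from the recurrences \eqref{eq:004} and \eqref{eq:005}, and then obtain the iterated form by induction on $a$. The $i$-th coordinate of $\mathcal V_{2n}$ is $t_m(2n-i)$ for $1\le i\le m-1$, and the $i$-th coordinate of $A\mathcal V_n$ is $\sum_{j=1}^{m-1}(-1)^i\binom{m}{2j-i}t_m(n-j)$. I would first note that \eqref{eq:004} and \eqref{eq:005} remain valid when $n$ is negative, under the convention $t_m(a)=0$ for $a<0$. Indeed, if $n<0$, every index on both sides is negative, so both sides vanish. Hence the recurrences can be used for every integer $n$, and no boundary cases arise when $n$ is small.

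I then split according to the parity of $i$. If $i=2k$ is even, \eqref{eq:004} applied at $n-k$ gives
\[
t_m(2n-i)=\sum_{j'=0}^{\lfloor m/2\rfloor}\binom{m}{2j'}t_m(n-k-j').
\]
Reindexing by $j=k+j'$ gives coefficient $\binom{m}{2j-i}$ and sign $+1=(-1)^i$. If $i=2k+1$ is odd, then $2n-i=2(n-k-1)+1$. Applying \eqref{eq:005} at $n-k-1$ and reindexing by $j=k+1+j'$ gives $t_m(2n-i)=-\sum\binom{m}{2j-i}t_m(n-j)$, with sign $-1=(-1)^i$.

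The only point that needs care is checking that the reindexed range of $j$ lies inside $\{1,\dots,m-1\}$, so that nothing outside $\mathcal V_n$ is required. The lower end is immediate, since $j\ge k\ge1$ in the even case and $j\ge k+1\ge1$ in the odd case. For the upper end, a nonzero coefficient $\binom{m}{2j-i}$ forces $2j-i\le m$, so $j\le (m+i)/2\le(2m-1)/2<m$. Conversely, every $j\in\{1,\dots,m-1\}$ with $0\le 2j-i\le m$ and $2j-i\equiv i\pmod 2$ arises from exactly one admissible $j'$. All other entries of $A$ in row $i$ vanish, either because the lower index lies outside $[0,m]$ or because $2j-i$ has the wrong parity for that recurrence. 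Since $2j-i\equiv i\pmod 2$ always holds, the parity mismatch never actually occurs, so the row of $A$ matches the recurrence exactly.

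Finally, $\mathcal V_{2^an}=A^a\mathcal V_n$ follows by induction on $a$. The case $a=0$ is trivial, and $\mathcal V_{2^{a+1}n}=\mathcal V_{2\cdot 2^an}=A\mathcal V_{2^an}=A\cdot A^a\mathcal V_n$. I expect no step to be a genuine obstacle; the index bookkeeping above is the main thing to get right.
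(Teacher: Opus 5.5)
Your proof is correct and is essentially the paper's argument: the paper reads off the coefficient $(-1)^{2j-i}\binom{m}{2j-i}$ of $t_m(n-j)$ directly from $T(x)^m=(1-x)^mT(x^2)^m$ and checks that $0\le 2j-i\le m$ forces $1\le j\le m-1$. You obtain the same coefficients by splitting \eqref{eq:004} and \eqref{eq:005} according to the parity of $i$ and reindexing, and your remark that these recurrences also hold at negative arguments under the zero convention makes explicit a point the paper leaves implicit.
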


\begin{proof}
The $i$-th coordinate on the left is $t_m(2n-i)$.  In the coefficient
identity coming from
$T(x)^m=(1-x)^mT(x^2)^m$, a term $t_m(n-j)$ can occur only with the
power $2j-i$ from $(1-x)^m$.  Its coefficient is
\[
(-1)^{2j-i}\binom{m}{2j-i}
=(-1)^i\binom{m}{2j-i}.
\]
The nonzero terms satisfy $0\leq2j-i\leq m$.  Since
$1\leq i\leq m-1$, these inequalities imply $1\leq j\leq m-1$,
so no coefficient outside the chosen window is needed.  This proves
\eqref{eq:008}; iteration proves the last formula.
\end{proof}

The next observation is the algebraic identity used in both infinite-family
arguments.  The family-dependent work will be to prove that the rescaled
matrix is integral and to determine it modulo $2$.

\begin{lemma}\label{lem:03}
Retain $A$ and $\mathcal V_n$ from Lemma~\ref{lem:02}.
Let $D$ be an invertible diagonal matrix over $\mathbb Q$.  Suppose that
there are an integer matrix $M$ and positive integers $e,\ell$ such that
\[
2^eMD=DA^\ell.
\]
Then, for all $a,n\geq0$,
\[
2^{ea}M^aD\mathcal V_n=D\mathcal V_{2^{\ell a}n}.
\]
\end{lemma}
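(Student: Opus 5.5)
The argument is a direct induction on $a$ that combines the hypothesis $2^eMD=DA^\ell$ with Lemma~\ref{lem:02}. First I will put the hypothesis in an iterated form. Since $D$ is invertible, it reads $M=2^{-e}DA^\ell D^{-1}$, so $M$ is the conjugate of $2^{-e}A^\ell$ by $D$. Raising to the $a$-th power gives
\[
M^a=2^{-ea}DA^{\ell a}D^{-1},\qquad\text{that is,}\qquad 2^{ea}M^aD=DA^{\ell a}.
\]
Equivalently, we can induct on $a$. The case $a=0$ is trivial, since both sides equal $D$. For the step,
\[
2^{e(a+1)}M^{a+1}D=2^eM\,(2^{ea}M^aD)=2^eM\,DA^{\ell a}=DA^\ell A^{\ell a}=DA^{\ell(a+1)}.
\]

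Second, I will apply both sides to $\mathcal V_n$ and use the last assertion of Lemma~\ref{lem:02} with exponent $\ell a$:
\[
2^{ea}M^aD\mathcal V_n=DA^{\ell a}\mathcal V_n=D\mathcal V_{2^{\ell a}n}.
\]
This is exactly the claim.

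No step is a genuine obstacle. The only points to check are these:
\begin{itemize}
\item The identities hold over $\mathbb Q$, so the integrality of $M$ plays no role here. It matters only in later applications, where the left side will give divisibility information.
\item Lemma~\ref{lem:02} is valid for all $n\ge0$, with the convention $t_m(a)=0$ for $a<0$.
\end{itemize}
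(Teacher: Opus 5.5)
Your proposal is correct and follows essentially the same route as the paper: iterate the hypothesis to obtain $2^{ea}M^aD=DA^{\ell a}$, then apply this to $\mathcal V_n$ and invoke the iterated form $\mathcal V_{2^{\ell a}n}=A^{\ell a}\mathcal V_n$ of Lemma~\ref{lem:02}. Your explicit induction step and the remark that integrality of $M$ plays no role here are accurate.
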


\begin{proof}
Iteration gives $2^{ea}M^aD=DA^{\ell a}$, and hence
\[
2^{ea}M^aD\mathcal V_n
=DA^{\ell a}\mathcal V_n
=D\mathcal V_{2^{\ell a}n}
\]
by Lemma~\ref{lem:02}.
\end{proof}

The same diagonal weights will be used in all three families treated below.
For $1\leq i<m$, define
\[
w_i=\nu_2\binom{m-2}{i-1},
\qquad
D=\operatorname{diag}(2^{w_1},\ldots,2^{w_{m-1}}).
\]
Their explicit form depends on $m$ and will be determined in the relevant
sections.

For $e\geq0$ and $0\leq c<2^e$, the subsequence
$(z_{2^e n+c})_{n\geq0}$ will be called a binary decimation of $(z_n)$.
The collection of all these decimations is its $2$-kernel.  A subset of the
nonnegative integers is called $2$-automatic when its characteristic
sequence is $2$-automatic.  A sequence is eventually periodic if it is
periodic from some index onward.

The following facts will be used for both infinite families and for
$m=6$.

\begin{lemma}\label{lem:04}
For every fixed $m\geq1$, the sequence $(t_m(n))_{n\geq0}$ is
$2$-regular.  In particular, for every $L\geq1$,
\[
\bigl(t_m(n)\bmod2^L\bigr)_{n\geq0}
\]
is $2$-automatic.
\end{lemma}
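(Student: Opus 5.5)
We need to show $(t_m(n))$ is 2-regular, and then deduce automaticity mod $2^L$.

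\emph{Regularity.} The cleanest route is to exhibit a finite set of sequences whose $\mathbb Z$-span is closed under the two decimations $n\mapsto 2n$ and $n\mapsto 2n+1$. Take the $m$ shifted sequences
\[
u_k(n)=t_m(n-k)\qquad(0\le k\le m-1),
\]
with the convention $t_m(a)=0$ for $a<0$. The coefficient identity behind \eqref{eq:004}--\eqref{eq:005} (equivalently, $T(x)^m=(1-x)^mT(x^2)^m$) gives, for every integer $N$,
\[
t_m(N)=\sum_{i}(-1)^{i}\binom{m}{i}\,t_m\Bigl(\frac{N-i}{2}\Bigr),
\]
where the sum runs over $0\le i\le m$ with $i\equiv N\pmod 2$. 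Apply this with $N=2n-k$ and with $N=2n+1-k$. In each case the arguments have the form $n-j$, and $j$ is forced to satisfy $0\le 2j+k-\epsilon\le m$, where $\epsilon\in\{0,1\}$ is the parity offset. This gives $0\le j\le \lfloor (m+k)/2\rfloor$. Since $k\le m-1$, we have $j\le m-1$ in every case except possibly $k=m-1$ with $m$ odd. The exceptional case needs checking: there $j\le (2m-1)/2$, so $j\le m-1$ still holds because $j$ is an integer.

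Hence both $u_k(2n)$ and $u_k(2n+1)$ are integer combinations of $u_0,\dots,u_{m-1}$. This is the same window-closure argument as in Lemma~\ref{lem:02}, extended to include the index $k=0$ and the odd decimation. The negative-index convention is consistent with this, because the identity holds for negative $N$ trivially: both sides vanish, as all arguments $(N-i)/2$ are then negative.

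By induction on $e$, every kernel element $(t_m(2^en+c))_n$ lies in the finitely generated $\mathbb Z$-module spanned by $u_0,\dots,u_{m-1}$. A submodule of a finitely generated $\mathbb Z$-module is finitely generated, since $\mathbb Z$ is Noetherian. Therefore $(t_m(n))$ is $2$-regular.

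\emph{Reduction mod $2^L$.} Reduce the linear representation modulo $2^L$. Every kernel sequence reduced mod $2^L$ is determined by a coefficient vector in $(\mathbb Z/2^L\mathbb Z)^m$ with respect to the reduced $u_k$. This is a finite set, so the $2$-kernel of $(t_m(n)\bmod 2^L)$ is finite. By the cited criterion \cite[Theorem~6.6.2]{AllSh1}, the sequence is therefore $2$-automatic. Alternatively, one can cite the general fact that a $2$-regular integer sequence reduced modulo any integer is $2$-automatic \cite{AlloucheShallitRegular}.

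The only delicate point is the index bookkeeping: confirming that the window $\{0,\dots,m-1\}$ of shifts is closed under both decimations, including at small $n$, where the negative-index convention must be compatible with the identity. Everything else is formal.
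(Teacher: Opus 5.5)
Your proof is correct, but it takes a different route from the paper's. The paper argues entirely by citation. Since $t$ is $2$-automatic, it is $2$-regular. Then $t_m$ is the $m$-fold Cauchy convolution of $t$, and regular sequences are closed under convolution. Finally, an integer-valued regular sequence reduced modulo a fixed integer is automatic.

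You instead write down an explicit linear representation coming from $T(x)^m=(1-x)^mT(x^2)^m$. The $\mathbb Z$-span of the $m$ shifts $t_m(n-k)$, $0\le k\le m-1$, is stable under both decimations $n\mapsto 2n$ and $n\mapsto 2n+1$. This is Lemma~\ref{lem:02} enlarged by the shift $k=0$ and by the odd decimation.

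Your index bookkeeping is sound, and in fact there is no exceptional case to treat. For the even decimation, $k/2\le j\le\lfloor(m+k)/2\rfloor\le\lfloor(2m-1)/2\rfloor=m-1$ for every $k\le m-1$, whatever the parities. For the odd decimation, $\lceil(k-1)/2\rceil\le j\le(m+k-1)/2\le m-1$.

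Your version buys self-containment and explicit bounds. The module generated by the $2$-kernel is free of rank at most $m$, and the $2$-kernel of $(t_m(n)\bmod 2^L)$ has at most $2^{Lm}$ elements. Your finiteness argument is just the standard proof of the reduction fact the paper cites.

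The paper's version buys brevity and generality. It uses nothing about $T$ beyond regularity, so it applies unchanged to powers of any $2$-regular series, such as the one in Appendix~\ref{app:02}. Yours depends on the specific Mahler equation.
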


\begin{proof}
The Thue--Morse sequence is $2$-automatic, and therefore $2$-regular.
The coefficient identity
\[
t_m(n)=
\sum_{\substack{n_1,\ldots,n_m\geq0\\n_1+\cdots+n_m=n}}
t(n_1)\cdots t(n_m)
\]
expresses $t_m$ as its $m$-fold convolution.  Regular sequences are closed
under convolution \cite[Theorem~3.1]{AlloucheShallitRegular}, so $t_m$ is
$2$-regular.  Reducing an integer-valued regular sequence modulo a fixed
integer gives an automatic sequence
\cite[Corollary~2.4]{AlloucheShallitRegular}.
\end{proof}

\begin{lemma}\label{lem:05}
Let $(z_n)_{n\geq0}$ be a $2$-automatic sequence with values in a finite
set.  Then
\[
\bigl(z_{\operatorname{odd}(n+1)}\bigr)_{n\geq0},
\qquad
\bigl(z_{2\operatorname{odd}(n+1)}\bigr)_{n\geq0}
\]
are $2$-automatic.  Moreover, if $(c_j)_{j\geq0}$ is eventually periodic
with values in a finite set, then
\[
\bigl(c_{\nu_2(n+1)}\bigr)_{n\geq0}
\]
is $2$-automatic.
\end{lemma}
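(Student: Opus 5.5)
The plan is to build an explicit finite automaton in each case: the binary expansion of $n+1$ determines $\operatorname{odd}(n+1)$ and $\nu_2(n+1)$. Throughout, write $n+1=2^{k}q$ with $q$ odd, so $q=\operatorname{odd}(n+1)$ and $k=\nu_2(n+1)$.

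For the first two sequences, fix a DFAO $\mathcal A$ (reading most significant digit first) computing $z$. When reading the binary digits of $n$, we need to know the digits of $n+1$. There are two ways to get them. One is to use that $n\mapsto n+1$ is realized by a finite transducer, so $(y_n)=(w_{n+1})$ is automatic whenever $(w_n)$ is; this follows from the kernel characterization, since the kernel sequences $(w_{2^en+c+1})$ with $c+1\le 2^e$ are either kernel sequences of $w$ or shifted ones $(w_{2^e(n+1)})$, finitely many up to one shift. So it suffices to show that $(z_{\operatorname{odd}(n)})_{n\ge1}$ and $(z_{2\operatorname{odd}(n)})_{n\ge1}$ are automatic.

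For $(z_{\operatorname{odd}(n)})$, the binary word of $\operatorname{odd}(n)$ is the word of $n$ with its trailing zeros deleted. The automaton reads $n$ MSD first and keeps a pair $(p,p')$. Here $p$ is the state of $\mathcal A$ after the digits read so far up to and including the last $1$, and $p'$ is that state further advanced by the pending zeros. The output is determined by $p$. Concretely, on reading a $1$ we set $p\leftarrow\delta(p',1)$ and $p'\leftarrow p$; on reading a $0$ we keep $p$ and set $p'\leftarrow\delta(p',0)$. This uses finitely many states. For $2\operatorname{odd}(n)$ the output is instead $\tau(\delta(p,0))$.

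For the last sequence, let $c_j$ be periodic for $j\ge J$ with period $P$. The automaton reads $n+1$ (again via the shift argument) or, more directly, reads $n$ itself, since $\nu_2(n+1)$ equals the number of trailing $1$'s of $n$. It tracks the current run length of trailing $1$'s, capped as follows: exact up to $J$, and beyond $J$ reduced modulo $P$ within the periodic range. A $0$ resets the count to $0$, and a $1$ increments it in this finite counter. The output is $c$ evaluated at the recorded value. This is a finite DFAO, so the sequence is $2$-automatic.

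The only delicate point is the shift $n\mapsto n+1$. I would handle it by the kernel argument above rather than constructing a carry transducer. That argument requires the standard fact that the $2$-kernel of a shifted automatic sequence is finite, which holds because $(w_{2^e n+c'})$ with $0\le c'\le 2^e$ lies in the kernel of $w$ together with the single extra family $c'=2^e$, namely $(w_{2^e(n+1)})_{n}$. Each member of this family is itself a shift by one of a kernel element, and finitely many kernel elements give finitely many such shifts.
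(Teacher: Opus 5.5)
Your proof is correct, but it takes a different route from the paper. The paper builds no automaton and needs no separate shift lemma. It works directly with the kernel via the identities $z_{\operatorname{odd}(2n+1)}=z_{2n+1}$ and $z_{\operatorname{odd}(2n+2)}=z_{\operatorname{odd}(n+1)}$. These show that every decimation of $(z_{\operatorname{odd}(n+1)})_{n\ge0}$ is either the sequence itself or an element of the $2$-kernel of $z$. The $+1$ is absorbed automatically: the even-index decimation lands on odd arguments, and the odd-index decimation reproduces the sequence. The second sequence then follows by replacing $z_n$ with $z_{2n}$. For the third, the paper uses the family $H_j(n)=c_{j+\nu_2(n+1)}$, which is closed under decimation because $H_j(2n)=c_j$ and $H_j(2n+1)=H_{j+1}(n)$. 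Eventual periodicity gives only finitely many distinct $H_j$.

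You instead construct explicit most-significant-digit-first automata and handle $n\mapsto n+1$ with a separate kernel argument, which is correct. The automata are the pair $(p,p')$ tracking the state at the last $1$ and at the current position, and a capped trailing-ones counter. For the third sequence you sidestep the shift by noting that $\nu_2(n+1)$ is the number of trailing $1$'s of $n$. That is essentially the paper's $H_j$ recursion read from the other end of the binary word.

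The paper's argument is shorter and describes the kernel explicitly. Yours is more constructive and gives a concrete state bound (at most $|Q|^2$ states before the shift), at the cost of the extra shift lemma.

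One notational point: when reading a $1$, the assignment $p'\leftarrow p$ must use the newly computed value $p=\delta(p',1)$. Read as a simultaneous assignment with the old $p$, the invariant fails.
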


\begin{proof}
The identities
\[
z_{\operatorname{odd}(2n+1)}=z_{2n+1},
\qquad
z_{\operatorname{odd}(2n+2)}
=z_{\operatorname{odd}(n+1)}
\]
show that every binary decimation of the first sequence is either the
sequence itself or an element of the $2$-kernel of $z$.  Its $2$-kernel is
therefore finite.  Replacing $z_n$ by the automatic subsequence $z_{2n}$
proves the second assertion.

For the final assertion, put
\[
H_j(n)=c_{j+\nu_2(n+1)}.
\]
Then
\[
H_j(2n)=c_j,
\qquad
H_j(2n+1)=H_{j+1}(n).
\]
Only finitely many distinct $H_j$ occur because $(c_j)$ is eventually
periodic.  Together with the finitely many constant sequences of value
$c_j$, these sequences are closed under binary decimation.  Thus $H_0$
has finite $2$-kernel, and the kernel characterization
of automatic sequences \cite[Theorem~6.6.2]{AllSh1} completes the proof.
\end{proof}

Fixed shifts, finite direct products, and maps between finite sets
preserve automaticity \cite{AllSh1}; a backward shift can be completed
with any fixed finite initial segment.  In particular, a vector formed from
finitely many fixed shifts of $t_m(n)$ is automatic modulo $2^L$ by
Lemma~\ref{lem:04}.

We shall also use two elementary ways of joining finitely many automatic
sequences.  Both are immediate from the kernel characterization of
automatic sequences \cite[Theorem~6.6.2]{AllSh1} and closure under finite
direct products.

\begin{lemma}\label{lem:06}
Let all sequences below take values in finite sets.
\begin{enumerate}
\item Fix $d\geq0$.  If each $(f_c(n))_{n\geq0}$ is $2$-automatic for
$0\leq c<2^d$, then the sequence defined by
\[
f(2^dn+c)=f_c(n)
\qquad(0\leq c<2^d)
\]
is $2$-automatic.
\item Let $E_1,\ldots,E_v$ be $2$-automatic sets forming a partition of
$\mathbb Z_{\geq0}$, and let $g_1,\ldots,g_v$ be $2$-automatic sequences.
Then the sequence which equals $g_i(n)$ for $n\in E_i$ is $2$-automatic.
\end{enumerate}
\end{lemma}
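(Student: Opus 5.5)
Both parts follow directly from the kernel characterization \cite[Theorem~6.6.2]{AllSh1}: a sequence with values in a finite set is $2$-automatic exactly when its $2$-kernel is finite. I would first verify finite kernels, and then use closure under finite direct products to handle the pairs of values that arise in part (2).

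For part (1), let $f$ be defined by $f(2^dn+c)=f_c(n)$. Consider a kernel element $(f(2^en+b))_{n\ge0}$ with $0\le b<2^e$.

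\emph{Case $e\ge d$.} Write $b=2^dq+c$ with $0\le c<2^d$. Then
\[
f(2^en+b)=f\bigl(2^d(2^{e-d}n+q)+c\bigr)=f_c(2^{e-d}n+q),
\]
which is an element of the $2$-kernel of $f_c$. There are finitely many such $c$, and each $f_c$ has a finite kernel, so only finitely many sequences arise here.

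\emph{Case $e<d$.} There are only finitely many pairs $(e,b)$ with $e<d$ and $0\le b<2^e$, so this case contributes finitely many sequences.

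The kernel of $f$ is therefore contained in a finite union of finite sets, and $f$ is automatic.

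For part (2), form the product sequence
\[
n\mapsto\bigl(\mathbf1_{n\in E_1},\ldots,\mathbf1_{n\in E_v},g_1(n),\ldots,g_v(n)\bigr).
\]
It is $2$-automatic because finite direct products preserve automaticity. The glued sequence is the image of this product under the map that selects $g_i(n)$ for the unique index $i$ whose indicator equals $1$. This is a map between finite sets, and such maps preserve automaticity.

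If one wants to stay purely inside the kernel framework, the same conclusion follows by observing that $2$-kernel operations act coordinatewise. The kernel of the product is then contained in the product of the individual kernels, which is finite.

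I expect no real obstacle here. The only step needing care is the index bookkeeping in part (1): one must check that the decomposition $b=2^dq+c$ always produces a valid kernel element of $f_c$, that is, that $0\le q<2^{e-d}$. This holds because $0\le b<2^e$.
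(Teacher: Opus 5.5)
Your proof is correct and follows the route the paper indicates. For part (1) you use the kernel characterization: the case $e\ge d$ reduces to kernel elements $f_c(2^{e-d}n+q)$ with $0\le q<2^{e-d}$, and the case $e<d$ contributes finitely many sequences. For part (2) you use closure under finite direct products followed by a map between finite sets, which the paper simply calls immediate; the selection map there can be defined arbitrarily on tuples that do not contain exactly one indicator equal to $1$, since such tuples never occur.
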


\section{The family \texorpdfstring{$m=2^r$}{m equal to a power of 2}}
\label{sec:03}

We begin with the simplest case.  For $m=2$, the scalar recurrences give
\[
t_2(2n)=t_2(n)+t_2(n-1),\qquad
t_2(2n+1)=-2t_2(n).
\]
Also $T(x)^2\equiv(1-x^2)^{-1}\pmod2$, so $t_2(2n)$ is odd.
Writing $n+1=2^\alpha u$ with $u$ odd and iterating the odd-index
recurrence yields
\[
t_2(n)=(-2)^\alpha t_2(u-1),\qquad
\nu_2(t_2(n))=\alpha=\nu_2(n+1).
\]
The matrix argument below extends this extraction of powers of $2$ to
every $m=2^r$.  It recovers the known valuation formula
\cite{GawronMiskaUlas} and proves automaticity of the odd parts.

\begin{retheorem}{thm:pow2}
Let $r\ge1$ and put $m=2^r$.  For every $n\ge0$,
\begin{equation}\label{eq:020}
\nu_2(t_m(n))=\nu_2\binom{n+m-1}{m-1}.
\end{equation}
Moreover, for every $s\ge1$, the sequence
\[
\bigl(\operatorname{odd}(t_m(n))\bmod2^s\bigr)_{n\ge0}
\]
is $2$-automatic.
\end{retheorem}

Throughout the proof, put
\[
m=2^r,\qquad d=\frac m2.
\]
We retain the coefficient vector $\mathcal V_n$ and the one-step matrix $A$
from Lemma~\ref{lem:02}.

\subsection{The rescaled one-step matrix}

The common weights defined in Section~\ref{sec:01} simplify in this family.  We use the
standard row vectors
\[
e_i=(\underbrace{0,\ldots,0}_{i-1},1,
     \underbrace{0,\ldots,0}_{m-1-i})
\qquad(1\leq i<m).
\]
Thus $e_iB$ is the $i$-th row of a matrix $B$.

\begin{lemma}\label{lem:08}
For $1\leq i<m$,
\begin{equation}\label{eq:021}
w_i=\nu_2(i).
\end{equation}
Define
\begin{equation*}
N=\frac12DAD^{-1}.
\end{equation*}
Then $N$ is an integer matrix.  If $B=N\bmod2$, then
\begin{equation}\label{eq:022}
e_iB=
\begin{cases}
e_d,&i\ \text{odd},\\
e_{i/2}+e_d+e_{d+i/2},&i\ \text{even}.
\end{cases}
\end{equation}
\end{lemma}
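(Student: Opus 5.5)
First, $w_i=\nu_2\binom{2^r-2}{i-1}$ is evaluated by Kummer's theorem. We have $\nu_2\binom{2^r-2}{i-1}=s_2(i-1)+s_2(2^r-1-i)-s_2(2^r-2)$. Since $2^r-1-i$ is the bitwise complement of $i$ in $r$ bits, $s_2(2^r-1-i)=r-s_2(i)$. Together with $s_2(2^r-2)=r-1$, this gives $w_i=s_2(i-1)-s_2(i)+1=\nu_2(i)$, because $s_2(i)=s_2(i-1)+1-\nu_2(i)$. This proves \eqref{eq:021}.

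Next comes integrality. The $(i,j)$ entry of $N$ is
\[
N_{ij}=(-1)^i\,2^{\nu_2(i)-\nu_2(j)-1}\binom{m}{2j-i}.
\]
Kummer also gives $\nu_2\binom{2^r}{k}=r-\nu_2(k)$ for $0<k<2^r$; the boundary values $k=0$ and $k=m$ give $\binom{m}{k}=1$. Put $k=2j-i$. The exponent of $2$ in $N_{ij}$ is then
\[
E_{ij}=r-\nu_2(k)+\nu_2(i)-\nu_2(j)-1\qquad(0<k<m),
\]
and $E_{ij}=\nu_2(i)-\nu_2(j)-1$ when $k\in\{0,m\}$. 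I would split into cases according to the parity of $i$.

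If $i$ is odd, then $k$ is odd, so $E_{ij}=r-1-\nu_2(j)$. This is $\geq 0$ because $j<m$ gives $\nu_2(j)\le r-1$. Equality holds iff $\nu_2(j)=r-1$, i.e.\ $j=d$. Then $k=m-i$ is odd and in range, so row $i$ reduces to $e_d$ modulo $2$.

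If $i=2i'$ is even, then $k=2(j-i')$ and $\nu_2(k)=1+\nu_2(j-i')$ when $j\ne i'$. For $0<k<m$ this gives
\[
E_{ij}=r-2-\nu_2(j-i')+\nu_2(i')+1-\nu_2(j).
\]
Set $a=\nu_2(i')$, $b=\nu_2(j)$ and $c=\nu_2(j-i')$. When $a\neq b$ we have $c=\min(a,b)$, so $E_{ij}=r-1-\max(a,b)+\bigl(a-\min(a,b)\bigr)-\bigl(b-\min(a,b)\bigr)+\ldots$; the cleaner route is to treat the subcases $a<b$, $a>b$ and $a=b$ directly. For $a<b$ we get $E_{ij}=r-1-b\ge0$, with equality iff $j=d$. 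For $a>b$ we get $E_{ij}=r-1-2b+a>r-1-b\geq0$, so these entries vanish mod $2$. For $a=b$ we have $c>a$, so $E_{ij}=r-1-c$; here $c\le r-1$ because $0<|j-i'|<d$ once $0<k<m$, hence $E_{ij}\ge0$, with equality iff $j-i'=\pm d$, i.e.\ $j=i'+d$.

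The boundary entries still need checking. The case $k=0$, i.e.\ $j=i'$, gives $E_{ij}=\nu_2(i)-\nu_2(i')-1=0$, which produces $e_{i/2}$. The case $k=m$, i.e.\ $j=i'+d$, lies inside the window only when $i'+d\le m-1$, and then it also contributes $e_{d+i/2}$. Finally, the $a=b$ case with $j=i'-d$ is impossible since $i'<d$. Collecting these gives \eqref{eq:022}.

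The main obstacle is this careful case bookkeeping for even $i$. One has to confirm that exactly the three columns $i/2$, $d$ and $d+i/2$ survive mod $2$, and these three must be distinct. For example, $d=i/2$ is impossible since $i<m$. For the $j=d$ column one must check that $\nu_2(i')<r-1$, which holds because $i'<d$.
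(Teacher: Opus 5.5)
Your proof is correct and is essentially the paper's argument: Kummer's theorem replaces the paper's Lucas-based identities for $w_i=\nu_2(i)$ and $\nu_2\binom{m}{k}=r-\nu_2(k)$, and the even rows use the same three-way valuation comparison, with the endpoints $k=0,m$ supplying the columns $i/2$ and $d+i/2$ (you compare $\nu_2(i/2)$ with $\nu_2(j)$, while the paper compares $\nu_2(i/2)$ with $\nu_2(j-i/2)$). One tidy-up: in your interior case $a=b$ one has $0<j-i'<d$, hence $c\le r-2$ and $E_{ij}\ge1$, so the column $i'+d$ comes only from the boundary entry $k=m$, which always lies in the window because $i'<d$ and has $E_{ij}=\nu_2(i')-\nu_2(i'+d)=0$.
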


The diagonal matrix acts on the coefficients as follows:
\[
D\mathcal V_n=
\begin{pmatrix}
2^{w_1}t_m(n-1)\\
2^{w_2}t_m(n-2)\\
\vdots\\
2^{w_{m-1}}t_m(n-m+1)
\end{pmatrix}.
\]
The factor $2^{-w_h}$ in a recovered coefficient will therefore come
from the $h$-th diagonal entry.

\begin{proof}
For $0<\ell<m$, the identity
\[
\binom m\ell=\frac m\ell\binom{m-1}{\ell-1}
\]
and Lucas' theorem give
\begin{equation}\label{eq:023}
\nu_2\binom m\ell=r-\nu_2(\ell),
\end{equation}
because every binomial coefficient in row $m-1=2^r-1$ is odd.
Also $\nu_2(m-i)=\nu_2(i)$ for $0<i<m$.  Hence
\[
\binom{m-2}{i-1}
=\frac{i(m-i)}{m(m-1)}\binom mi
\]
has valuation $\nu_2(i)$, proving \eqref{eq:021}.

By \eqref{eq:007}, the $(i,j)$ entry of $N$ is
\begin{equation*}
N_{i,j}
=(-1)^i2^{w_i-w_j-1}\binom m{2j-i}.
\end{equation*}
We determine its valuation whenever the binomial coefficient is nonzero.

Suppose first that $i$ is odd.  Then $w_i=0$ and $2j-i$ is odd, so
\eqref{eq:023} gives
\[
\nu_2(N_{i,j})=r-\nu_2(j)-1.
\]
This is nonnegative, and it is zero precisely when $j=d$.  Thus the odd
rows of $B$ equal $e_d$.

Now let $i=2c$, where $1\leq c<d$.  The nonzero entries occur for
$c\leq j\leq c+d$.  At the two endpoints $j=c$ and $j=c+d$, the lower
index of the binomial coefficient is respectively $0$ and $m$; both entries
of $N$ are odd.  For an interior entry write $j=c+z$, where $1\leq z<d$.
Using $w_j=\nu_2(j)$ and \eqref{eq:023} gives
\begin{equation*}
\nu_2(N_{2c,c+z})
=r-1+\nu_2(c)-\nu_2(z)-\nu_2(c+z).
\end{equation*}
If $\nu_2(c)<\nu_2(z)$, the right-hand side is $r-1-\nu_2(z)>0$.
If $\nu_2(c)>\nu_2(z)$, it equals
$r-1+\nu_2(c)-2\nu_2(z)\geq r-\nu_2(z)>0$.  If
$\nu_2(c)=\nu_2(z)$, it equals
\[
r-1-\nu_2(c+z),
\]
which is nonnegative and vanishes precisely when $c+z=d$.  Hence the only
odd entries in row $2c$ occur in columns $c,d,c+d$.  This proves both the
integrality of $N$ and \eqref{eq:022}.
\end{proof}

Iteration of \eqref{eq:022} determines the rows of every power of $B$.
After $r$ steps every entry is $1$.

\begin{lemma}\label{lem:09}
Let
\[
\mathbf J=
\begin{pmatrix}
1&1&\cdots&1\\
1&1&\cdots&1\\
\vdots&\vdots&&\vdots\\
1&1&\cdots&1
\end{pmatrix}_{(m-1)\times(m-1)}.
\]
Then
\begin{equation*}
N^a\equiv\mathbf J\pmod2
\qquad(a\geq r).
\end{equation*}
\end{lemma}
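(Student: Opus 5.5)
Since $N^a \bmod 2 = B^a$ over $\mathbb F_2$, I will work entirely with $B$ and track its rows under \eqref{eq:022}. The claim is that $B^a=\mathbf J$ over $\mathbb F_2$ for every $a\ge r$. It is enough to prove $B^r=\mathbf J$ together with $\mathbf J B=\mathbf J$; then $B^{a}=B^rB^{a-r}=\mathbf J B^{a-r}=\mathbf J$.

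First, $\mathbf J B=\mathbf J$. Row $j$ of $\mathbf J B$ is the sum over $\mathbb F_2$ of column $j$ of $B$, so I need every column of $B$ to contain an odd number of $1$'s. By \eqref{eq:022}, column $d$ receives a $1$ from each of the $m-1$ rows, which is an odd count. A column $k\ne d$ receives a $1$ only from row $2k$ when $k<d$, or from row $2(k-d)$ when $k>d$. In either case there is exactly one such row, so the count is again odd. Hence every column sum is $1$.

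Second, $e_iB^r=\mathbf 1$ (the all-ones row) for every $i$. I will induct on $\nu_2(i)$, proving the sharper statement $e_iB^{\nu_2(i)+1}=\mathbf 1$. For the base case take $i$ odd. Then $e_iB=e_d$. Since $d=2^{r-1}$, I will show by a downward recursion that $e_{2^k}B^{k+1}=\mathbf 1$ for $0\le k\le r-1$, so in particular $e_dB^{r}=\mathbf 1$ and $e_iB^{r+1}=\mathbf 1$. This is off by one from what is wanted, so it is cleaner to set up the induction in terms of $u_k:=e_{2^k}B^{k}$.

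The real structure is the following. For even $i=2c$, \eqref{eq:022} gives $e_{2c}B=e_c+e_d+e_{c+d}$. After one more application, the contribution of $e_d$ and the pair $e_c,e_{c+d}$ should combine into a sum over a coset. Indeed, $c$ and $c+d$ have the same $2$-adic valuation when $\nu_2(c)<r-1$, so by induction they yield the same rows. More precisely, I will prove by induction on $k$ that for every $i$ with $\nu_2(i)=k$,
\[
e_iB^{k+1}=\sum_{\nu_2(j)\ge r-1-k} e_j
\]
(the sum over $1\le j<m$ with $\nu_2(j)\ge r-1-k$), with the right-hand side depending only on $k$. The base case $k=0$ is $e_iB=e_d$, which matches since the only $j$ with $\nu_2(j)\ge r-1$ is $d$. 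For the inductive step, write $i=2c$ with $\nu_2(c)=k-1$. Then
\[
e_iB^{k+1}=(e_c+e_{c+d})B^{k}+e_dB^{k}.
\]
When $k\le r-1$, both $c$ and $c+d$ have valuation $k-1$, so by induction the first two terms are equal and cancel mod $2$. We are then left with $e_dB^k$, which is where the induction fails as stated. I therefore expect the correct invariant to need adjusting, probably to "$e_iB^{a}$ for $a>k$ equals the sum over $j$ with $\nu_2(j)\ge r-1-(a-1-k)$", or possibly to the valuation of $d$'s orbit. Before trusting any formula I will compute small cases, $r=2$ ($m=4$) and $r=3$, to pin down the exact invariant.

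I expect this bookkeeping to be the main obstacle: finding the exact closed form of $e_iB^a$ as an indicator of $\{j:\nu_2(j)\ge \text{threshold}\}$, with a threshold that decreases by one per step and reaches $0$ (all $j$) by step $r$. Once that form is fixed, each inductive step is a direct use of \eqref{eq:022} plus the cancellation of the paired rows $c$ and $c+d$. The conclusion $B^r=\mathbf J$ then follows, and combined with $\mathbf JB=\mathbf J$ it gives the lemma.
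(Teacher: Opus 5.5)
Your reduction to $B^r=\mathbf J$ and your column-count proof of $\mathbf J B=\mathbf J$ are correct; they are exactly the paper's final step. The gap is in the main step, proving $e_iB^r=\mathbf 1$ for every $i$. Your induction stalls at the leftover term $e_dB^k$, and you defer the invariant to small-case experiments. The fallback guesses you list would not work:
\begin{itemize}
\item The claim $e_{2^k}B^{k+1}=\mathbf 1$ is false for $k<r-1$; already $e_1B=e_d$ when $r\ge2$.
\item The adjusted threshold $r-1-(a-1-k)$ is wrong whenever $k\ge1$. At $a=k+1$ it predicts $e_d$, whereas the correct threshold for $a>\nu_2(i)$ is $r-a$, independent of $\nu_2(i)$.
\item For $a\le\nu_2(i)$ the row $e_iB^a$ is not a valuation-threshold indicator at all. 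For $m=8$ one has $e_2B=e_1+e_4+e_5$. This is why the paper's row formula carries the extra residue-class term $V_{q_a,i/2^a}$.
\end{itemize}

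Your first invariant, $e_iB^{\nu_2(i)+1}=\sum_{\nu_2(j)\ge r-1-\nu_2(i)}e_j$, is in fact true. What is missing is a computation of $e_dB^k$.
\begin{itemize}
\item Put $U_{2^t}=\sum_{1\le j<m,\ 2^t\mid j}e_j$. For $1\le t\le r-1$ these $j$ are all even, and there are $m/2^t-1$ of them, an odd number.
\item By \eqref{eq:022}, their rows contribute $e_d$ an odd number of times. The terms $e_{j/2}$ and $e_{d+j/2}$ run over the multiples of $2^{t-1}$ below and above $d$.
\item Hence $U_{2^t}B=U_{2^{t-1}}$; this is the paper's relation $U_qB=U_{q/2}$.
\item Since $e_d=U_{2^{r-1}}$, iterating gives $e_dB^k=U_{2^{r-1-k}}$. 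This is precisely the leftover term your inductive step needs after the rows $c$ and $c+d$ cancel, so the induction closes.
\item Then $e_iB^r=U_{2^{r-1-\nu_2(i)}}B^{r-1-\nu_2(i)}=U_1=\mathbf 1$.
\end{itemize}
With this one identity added, your route is slightly leaner than the paper's. The paper describes every row of $B^a$ for $a\le r$, and therefore must also track how the residue-class sums $V_{q,c}$ evolve.
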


\begin{proof}
It is enough first to compute the row supports of $B=N\bmod2$, by which we
mean the columns containing a $1$ in each row.  If $q$ is
a divisor of $m$, put
\[
U_q=\sum_{\substack{1\leq j<m\\q\mid j}}e_j,
\qquad
V_{q,c}=\sum_{\substack{1\leq j<m\\j\equiv c\;(\mathrm{mod}\ q)}}e_j.
\]
We claim that, for $1\leq a\leq r$ and $q_a=m/2^a$,
\begin{equation}\label{eq:024}
e_iB^a
=U_{q_a}
+
\begin{cases}
V_{q_a,i/2^a},&2^a\mid i,\\
0,&2^a\nmid i.
\end{cases}
\end{equation}
All sums in this proof are taken over $\mathbb F_2$.

For $a=1$, formula \eqref{eq:024} is exactly
\eqref{eq:022}: here $U_d=e_d$, while for $i=2c$ the second
sum is $e_c+e_{c+d}$.  For the induction step, take $q=q_a$ with
$1\leq a<r$; in particular, $q$ divides $d$ and $m/q$ is even.  For
$1\leq c<q$, formula \eqref{eq:022} gives
\begin{equation*}
U_qB=U_{q/2},\qquad
V_{q,c}B=
\begin{cases}
0,&c\ \text{odd},\\
V_{q/2,c/2},&c\ \text{even}.
\end{cases}
\end{equation*}
Indeed, in $U_qB$ the first and third terms from the even rows give all
nonzero multiples of $q/2$ except $d$, and the middle terms give $e_d$.
For $V_{q,c}B$, the middle terms occur an even number of times and cancel;
the other two terms give the stated residue class when $c$ is even, while
all rows are identical and occur an even number of times when $c$ is odd.
These relations prove \eqref{eq:024} by induction.

At $a=r$ one has $q_r=1$.  Since $m\nmid i$ for $1\leq i<m$, the second
term in \eqref{eq:024} is absent, and every row of $B^r$
is $U_1$.  Hence $B^r=\mathbf J$.

It remains to see that the equality persists.  Every column of $B$ has odd
sum.  For $j<d$ the only contribution is from row $2j$; for $j>d$ it is
from row $2(j-d)$.  Column $d$ receives one contribution from each of the
$d$ odd rows and one from each of the $d-1$ even rows, for a total of
$m-1$ contributions.  Thus $\mathbf JB=\mathbf J$, and consequently
$B^a=\mathbf J$ for all $a\geq r$.
\end{proof}

\subsection{The valuation formula and automaticity}

We next connect the rescaled recurrence to the binomial valuation.  The following
exact formula is the key point: after the predictable power of $2$ is
removed, no division remains.

\begin{proposition}\label{prop:03}
Let $r\geq1$, $m=2^r$, and write
\[
n=mq+\rho,\qquad 0\leq\rho<m.
\]
Put
\[
b(n)=\nu_2\binom{n+m-1}{m-1}.
\]
If $\rho=0$, then $b(n)=0$ and $t_m(n)$ is odd.  If $1\leq\rho<m$, put
\[
h=m-\rho,\qquad
\alpha=r+\nu_2(q+1),\qquad
u=\operatorname{odd}(q+1).
\]
Then
\begin{equation}\label{eq:025}
b(n)=\alpha-w_h
\end{equation}
and
\begin{equation}\label{eq:026}
\operatorname{odd}(t_m(n))
=\bigl(N^\alpha D\mathcal V_u\bigr)_h.
\end{equation}
In particular, \eqref{eq:020} holds.
\end{proposition}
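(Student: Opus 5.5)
The plan is to read $t_m(n)$ off as a single coordinate of $\mathcal V_{2^\alpha u}$ and then use the rescaled matrix $N$ to divide out exactly $2^{\alpha-w_h}$. Two parity facts are needed along the way. First, modulo $2$,
\[
T(x)\equiv\prod_{j\ge0}(1+x^{2^j})=\frac1{1-x},
\qquad\text{so}\qquad
T(x)^{m}\equiv\frac1{1-x^{m}}\pmod 2
\]
(Frobenius, $m=2^r$). Hence for $k\ge0$, $t_m(k)$ is odd exactly when $m\mid k$. This settles the parity of $t_m(n)$ when $\rho=0$. For the valuation $b(n)$ when $\rho=0$, I use
\[
b(n)=\sum_{k=1}^{m-1}\bigl(\nu_2(n+k)-\nu_2(k)\bigr).
\]
Since $n+k\equiv k\pmod m$ and $0<k<m$, we have $\nu_2(n+k)=\nu_2(k)$, so $b(n)=0$.

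Now let $1\le\rho<m$, so $1\le h<m$. The key observation is
\[
n+h=m(q+1)=2^{\alpha}u .
\]
Thus $t_m(n)=t_m(2^\alpha u-h)$ is the $h$-th coordinate of $\mathcal V_{2^\alpha u}$. By Lemma~\ref{lem:08} we have $2ND=DA$ with $N$ integral. Lemma~\ref{lem:03} with $e=\ell=1$ then gives $2^{\alpha}N^{\alpha}D\mathcal V_u=D\mathcal V_{2^\alpha u}$. Comparing $h$-th coordinates yields
\[
t_m(n)=2^{\alpha-w_h}\bigl(N^\alpha D\mathcal V_u\bigr)_h .
\]
Here $\alpha-w_h\ge r-\nu_2(h)>0$ by \eqref{eq:021}. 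It therefore suffices to show that the integer $(N^\alpha D\mathcal V_u)_h$ is odd; then \eqref{eq:026} follows and $\nu_2(t_m(n))=\alpha-w_h$.

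For the oddness, I apply Lemma~\ref{lem:09}: since $\alpha\ge r$, we have $N^\alpha\equiv\mathbf J\pmod 2$. So
\[
(N^\alpha D\mathcal V_u)_h\equiv\sum_{i=1}^{m-1}2^{w_i}t_m(u-i)\equiv\sum_{i\ \mathrm{odd}}t_m(u-i)\pmod2,
\]
because $w_i=\nu_2(i)$ vanishes exactly for odd $i$. As $u$ is odd, the indices $u-i$ with $i$ odd run over the $m/2$ consecutive even integers $u-m+1,\dots,u-1$. These hit each even residue class mod $m$ exactly once, so exactly one of them is divisible by $m$. That one is $>u-m\ge 1-m$, hence it is $\ge0$. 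Negative indices contribute $0$, so by the parity fact exactly one summand is odd, and the sum is odd. This step, which is really the content of Lemma~\ref{lem:09}, is where I expect the only subtlety: the boundary check that the unique multiple of $m$ is a genuine nonnegative index.

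Finally I verify \eqref{eq:025} with the same product formula for $b(n)$. The factor $k=h$ contributes $\nu_2(n+h)=\alpha$. For $k\ne h$, write $n+k=m(q+1)+(k-h)$ with $0<|k-h|<m$, so $\nu_2(n+k)=\nu_2(|k-h|)$. Summing over $k\ne h$ gives
\[
\nu_2\bigl((h-1)!\,(m-1-h)!\bigr).
\]
Subtracting $\nu_2((m-1)!)$ leaves
\[
b(n)=\alpha-\nu_2\!\left((m-1)\binom{m-2}{h-1}\right)=\alpha-w_h ,
\]
since $m-1$ is odd. Combined with the previous paragraph this gives $\nu_2(t_m(n))=b(n)$ in all cases, i.e.\ \eqref{eq:020}.
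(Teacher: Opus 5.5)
Your proof is correct and follows the paper's route. You apply Lemma~\ref{lem:03} with $e=\ell=1$, read off the $h$-th coordinate of $\mathcal V_{2^\alpha u}$, and use Lemma~\ref{lem:09} together with $T(x)^m\equiv(1+x^m)^{-1}\pmod 2$ to see that $(N^\alpha D\mathcal V_u)_h$ is odd. The only differences are minor: you compute $b(n)$ from $\prod_{k=1}^{m-1}(n+k)/k$ and match it directly with the definition $w_h=\nu_2\binom{m-2}{h-1}$ rather than using the digit-sum formula \eqref{eq:006}, and you count the unique nonnegative multiple of $m$ among the indices $u-i$ rather than using the paper's generating-function identity $\mathcal W(x)T(x)^m\equiv x/(1+x^2)$.
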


\begin{proof}
Formula~\eqref{eq:006} gives
\[
b(n)=s_2(n)+s_2(m-1)-s_2(n+m-1).
\]
Since $s_2(m-1)=r$, the case $\rho=0$ immediately gives $b(n)=0$.
If $\rho>0$, the low $r$ binary digits separate from the remaining digits,
and hence
\[
\begin{aligned}
b(n)
&=r+s_2(q)+s_2(\rho)-s_2(q+1)-s_2(\rho-1)\\
&=r+\nu_2(q+1)-\nu_2(\rho).
\end{aligned}
\]
Here we used
\[
s_2(q)-s_2(q+1)=\nu_2(q+1)-1,\qquad
s_2(\rho)-s_2(\rho-1)=1-\nu_2(\rho).
\]
Because $0<\rho<m$, one has
\[
w_h=\nu_2(m-\rho)=\nu_2(\rho).
\]
This proves \eqref{eq:025}.

Lemma~\ref{lem:03}, with $e=\ell=1$, gives
\begin{equation*}
D\mathcal V_{2^\alpha u}
=2^\alpha N^\alpha D\mathcal V_u.
\end{equation*}
Since $n+h=m(q+1)=2^\alpha u$, the $h$-th coordinate of this identity is
\begin{equation}\label{eq:027}
t_m(n)
=2^{\alpha-w_h}\bigl(N^\alpha D\mathcal V_u\bigr)_h.
\end{equation}

It remains to show that the last factor is odd.  Put
\[
\mathcal W(x)=\sum_{j=1}^{m-1}2^{w_j}x^j.
\]
By the definition of $w_j$, only odd indices survive modulo $2$, so
\[
\mathcal W(x)
\equiv x+x^3+\cdots+x^{m-1}
=\frac{x(1+x^m)}{1+x^2}
\pmod2.
\]
Also
\[
T(x)^m\equiv(1+x)^{-m}=(1+x^m)^{-1}\pmod2.
\]
Consequently
\begin{equation*}
\mathcal W(x)T(x)^m\equiv\frac{x}{1+x^2}\pmod2.
\end{equation*}
The coefficient of $x^u$ in the left-hand side is
\[
\sum_{j=1}^{m-1}2^{w_j}t_m(u-j),
\]
which is odd because $u$ is odd.  Since $\alpha\geq r$,
Lemma~\ref{lem:09} shows that every coordinate of
$N^\alpha D\mathcal V_u$ is congruent to this sum modulo $2$.  Thus the
factor in \eqref{eq:027} is odd, proving
\eqref{eq:026} and the valuation formula when $\rho>0$.

Finally,
\[
T(x)^m\equiv(1+x^m)^{-1}\pmod2
\]
has coefficient $1$ at every power $x^{mq}$.  Thus $t_m(mq)$ is odd, which
settles the remaining case $\rho=0$.
\end{proof}

\begin{proof}[Proof of Theorem~\ref{thm:pow2}]
Proposition~\ref{prop:03} proves \eqref{eq:020}.

Fix $s\geq1$.  For each $1\leq\rho<m$, formula
\eqref{eq:026} expresses the odd part at
$n=mq+\rho$ as
\begin{equation}\label{eq:028}
\bigl(
N^{\,r+\nu_2(q+1)}
D\mathcal V_{\operatorname{odd}(q+1)}
\bigr)_{m-\rho}
\pmod{2^s}.
\end{equation}
Put $Y_v=D\mathcal V_v$.  The vector sequence
$(Y_v\bmod2^s)_{v\geq0}$ is $2$-automatic by
Lemma~\ref{lem:04}.  The powers of $N$ modulo $2^s$ form an eventually
periodic sequence in the finite matrix ring over
$\mathbb Z/2^s\mathbb Z$.  Hence $C_j=N^{r+j}\bmod2^s$ is eventually
periodic, and Lemma~\ref{lem:05} shows
that both $(C_{\nu_2(q+1)})_{q\geq0}$ and
$(Y_{\operatorname{odd}(q+1)})_{q\geq0}$ are $2$-automatic.  Taking their
finite direct product and then multiplying the two components shows that
\[
\bigl(N^{r+\nu_2(q+1)}Y_{\operatorname{odd}(q+1)}\bmod2^s\bigr)_{q\geq0}
\]
is $2$-automatic.  Extracting the fixed coordinate in
\eqref{eq:028} preserves automaticity.

For $\rho=0$, Proposition~\ref{prop:03} gives
\[
\operatorname{odd}(t_m(mq))=t_m(mq).
\]
This subsequence modulo $2^s$ is $2$-automatic by
Lemma~\ref{lem:04}, since $m=2^r$ and taking the
subsequence indexed by $2^rq$ is an iteration of binary decimation.
Finally, the $m$ residue classes are interleaved by the last $r$ binary
digits of $n$.  Lemma~\ref{lem:06}(1) shows that this
interleaving is $2$-automatic.  The theorem follows.
\end{proof}

\begin{remark}
For $m=2^r$, the entries of $DAD^{-1}$ are divisible by $2$, so one
binary step is enough for the rescaling.  The family $m=3\cdot2^r$
will use two steps, with the integer matrix $DA^2D^{-1}/4$.
\end{remark}

\section{The family \texorpdfstring{$m=3\cdot2^r$}{m=3 times a power of 2}}
\label{sec:05}

This section proves Theorems~\ref{thm:val3} and~\ref{thm:odd3} for the
family whose first member is $m=12$.  We begin with the valuation formula.

\begin{retheorem}{thm:val3}
Let $r\ge2$ and put $m=3\cdot2^r$.  For every $n\ge0$,
\[
\nu_2(t_m(n))=\nu_2\binom{n+m-1}{m-1}.
\]
\end{retheorem}

The proof below treats every $r\geq2$ within the same chain of lemmas, so the
case $r=2$ proves the valuation formula for $m=12$ directly.  There are two
separate points to check when $r=2$.  First, one coefficient-valuation lemma
requires a finite initial calculation; that calculation is included in the
lemma.  Second, an auxiliary correction in the first part of the proof has
a different vanishing threshold.  These two checks are stated and proved
separately below, after all notation involved has been defined.
Put
\[
P=2^r,\qquad m=3P,
\]
and, for $n\geq0$, write
\[
C(n)=\binom{n+m-1}{m-1},
\qquad
b(n)=\nu_2(C(n)).
\]
By \eqref{eq:006},
\begin{equation}\label{eq:043}
b(n)=s_2(n)+s_2(m-1)-s_2(n+m-1).
\end{equation}

We state four auxiliary lemmas, derive the theorem from them, and then
prove the lemmas.

\subsection{Auxiliary lemmas}

The first lemma treats all indices for which the predicted valuation is at
most $r+2$.

\begin{lemma}\label{lem:10}
For $r\geq2$ and $m=3\cdot2^r$, one has
\[
b(n)\leq r+2
\quad\Longrightarrow\quad
\nu_2(t_m(n))=b(n).
\]
\end{lemma}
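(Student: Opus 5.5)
The plan is to compare $T(x)^m$ directly with $(1-x)^{-m}=\sum_{n\ge0}C(n)x^n$ modulo $2^{r+3}$. Only $n$ with $b(n)\le r+2$ are concerned, and for these it suffices to know $t_m(n)\bmod 2^{b(n)+1}$, hence $t_m(n)\bmod2^{r+3}$. Since $T(x)\equiv(1+x)^{-1}\pmod 2$, write
\[
T(x)=(1-x)^{-1}\bigl(1+2G(x)\bigr),\qquad G\in\mathbb Z[[x]],
\]
so that $T(x)^m=(1-x)^{-m}(1+2G(x))^{3\cdot2^r}$. Putting $\Lambda=\tfrac12\log(1+2G)$, which has integer coefficients, the $2$-adic exponential gives
\[
(1+2G)^{2^r}=\exp\bigl(2^{r+1}\Lambda\bigr)
\equiv 1+2^{r+1}\Lambda+2^{2r+1}\Lambda^2\pmod{2^{r+3}}.
\]
For $r\ge2$ we have $2r+1\ge r+3$, so the quadratic term disappears. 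Cubing, we get $(1+2G)^m\equiv1+3\cdot2^{r+1}\Lambda\pmod{2^{r+3}}$, and hence
\[
t_m(n)\equiv C(n)+3\cdot2^{r+1}\,[x^n]\bigl((1-x)^{-m}\Lambda(x)\bigr)\pmod{2^{r+3}}.
\]

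If $b(n)\le r$, this congruence modulo $2^{r+1}$ already gives $\nu_2(t_m(n))=b(n)$. The remaining cases are $b(n)\in\{r+1,r+2\}$. For these we need the correction coefficient $E(n)=[x^n](1-x)^{-m}\Lambda(x)$ to satisfy $\nu_2(E(n))\ge b(n)-r$. Concretely, $E(n)$ must be even when $b(n)=r+1$, and divisible by $4$ when $b(n)=r+2$. (Alternatively, if $E(n)$ is not divisible enough, it must combine with $C(n)$ to preserve the valuation; I would first aim for the clean divisibility.)

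To compute $E(n)\bmod4$, first determine $\Lambda\bmod4$. From $T(x)=(1-x)T(x^2)$ we get
\[
1+2G(x)=\frac{(1-x)^2}{1-x^2}\,\bigl(1+2G(x^2)\bigr).
\]
Taking logarithms gives a Mahler-type relation
\[
\Lambda(x)=\Lambda(x^2)+\tfrac12\log\frac{(1-x)^2}{1-x^2}.
\]
The last term is $\log\frac{1-x}{1+x}/2\cdot\ldots$, and its expansion is $-\sum_{k\ \text{odd}}2x^k/k\cdot\tfrac12\cdot2$, which is explicit with controlled $2$-adic denominators. Iterating this relation gives $\Lambda\bmod4$ as an explicit lacunary sum. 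Next, $(1-x)^{-m}\bmod4$ is also explicit: with $m=3\cdot2^r$,
\[
(1-x)^{-m}\equiv(1-x^{2^{r-1}})^{-6}\cdot(\text{unit corrections})\pmod 4,
\]
which follows from $(1-x)^{2^k}\equiv(1-x^2)^{2^{k-1}}\pmod{2^k}$-type congruences. One then reads off $E(n)\bmod4$ in terms of the binary digits of $n$. Via \eqref{eq:043}, the conditions $b(n)=r+1$ and $b(n)=r+2$ translate into carry patterns in $n+m-1$, that is, into conditions on the low $r+2$ digits of $n$ and on $\nu_2$ of the upper part. The task is then to check that these patterns force $E(n)\equiv0\pmod{2^{b(n)-r}}$.

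I expect this last step to be the main obstacle. It is a case analysis on the carry structure of $n+m-1$, and in the case $b(n)=r+2$ the mod-$4$ information on $\Lambda$ must be exploited fully. In the case $r=2$, the exponent bounds are tight: the estimate $2r+1\ge r+3$ holds with equality, and the reductions of $(1-x)^{-m}$ modulo $4$ have fewer spare factors of $2$. Some small residues of $n$ may then escape the general argument. I would handle these by a finite initial computation, using the recurrences \eqref{eq:004}--\eqref{eq:005} to reduce to finitely many $n$ checked directly.
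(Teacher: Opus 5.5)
Your reduction is correct, and in one respect it is slicker than the paper's. The $2$-adic $\log/\exp$ expansion is legitimate, with $\Lambda\in\mathbb Z_{(2)}[[x]]$ rather than $\mathbb Z[[x]]$, and the congruence $t_m(n)\equiv C(n)+3\cdot2^{r+1}E(n)\pmod{2^{r+3}}$ holds. Since $\nu_2(C(n))=b(n)\le r+2$, the condition $\nu_2(E(n))\ge b(n)-r$ is actually equivalent to the conclusion, so your ``alternatively'' clause is moot. Your logarithm also gives a closed form, $\Lambda(x)=\tfrac12\log V(x)=-\sum_{a\ge1}x^a/\operatorname{odd}(a)$. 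Hence $3\Lambda\equiv\sum_{a\ge1}\chi(a)x^a\pmod4$, which is the paper's Lemma~\ref{lem:15} in one line. Two small corrections: your expansion of $\tfrac12\log\frac{1-x}{1+x}$ is off by a factor $2$, since it equals $-\sum_{k\ \mathrm{odd}}x^k/k$; and no ``unit corrections'' are needed, because $(1-x)^{-m}\equiv(1+x^{P/2})^{-6}\pmod4$ holds exactly.

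The gap is that the proposal stops where the proof of the lemma begins. You never show $\nu_2(E(n))\ge b(n)-r$, and that divisibility is the entire content of the statement.
\begin{itemize}
\item The parity half is easy in your setup. One has $E(n)\equiv[x^n]\,(x+x^2+\cdots+x^P)/(1+x^{4P})\pmod2$, which is odd only when $n\bmod 4P\in\{1,\dots,P\}$, and there \eqref{eq:047}--\eqref{eq:048} give $b(n)\le r$.
\item The case $b(n)=r+2$ needs $E(n)\equiv-\sum_{kP/2<n}(-1)^k\binom{k+5}{5}\chi(n-kP/2)\equiv0\pmod4$, and this requires an actual computation. You need the period-$16$ behaviour of $(-1)^k\binom{k+5}{5}$ modulo $4$ and of its partial sums. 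When $n$ is a multiple of $P/2$, you also need the twisted sums $J_q=\sum_{k<q}(-1)^k\binom{k+5}{5}\chi(q-k)$, evaluated along $q=8v+c$ and matched against the digit formula for $b(n)$. This is the substance of Lemmas~\ref{lem:16} and~\ref{lem:17}, and none of it is supplied.
\item Your factorization convolves $\Lambda$ with all exponents, so odd $n$ require a separate argument. The paper writes $T(x)^m=(1+x)^{-m}V(x^2)^m$ instead, so the correction lives on even exponents and every odd $n$ is disposed of at once.
\item The proposed fallback for $r=2$ is not a proof. Residue classes of $n$ are infinite, and the recurrences \eqref{eq:004}--\eqref{eq:005} give no mechanism for reducing an exact-valuation statement over infinitely many $n$ to finitely many checks. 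In the paper, $r=2$ goes through the same infinite case analysis, and the threshold $b(n)\ge4$ of Lemma~\ref{lem:17} is exactly what is needed at $b(n)=r+2$.
\end{itemize}
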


For the remaining indices, with $b(n)\geq r+3$, we use a diagonally
rescaled two-step recurrence.

Retain the coefficient vector $\mathcal V_n$, the matrix $A$, and the
common weights $w_i$ and $D$ from Section~\ref{sec:01}, now with $m=3P$.
Define the rescaled two-step matrix by
\begin{equation}\label{eq:044}
M=\frac14DA^2D^{-1}.
\end{equation}
The next three lemmas give the weights, the parity of powers of $M$, and the
scalar sum needed after stabilization modulo $2$.

\begin{lemma}\label{lem:11}
Write $1\leq i\leq3P-1$ in one of the forms appearing below.  Then
\[
\begin{array}{c|c}
i&w_i\\ \hline
i=s,\quad 1\leq s<P&\nu_2(s)\\
i=P&r+1\\
i=P+s,\quad1\leq s<P&1+\nu_2(s)\\
i=2P&r+1\\
i=2P+s,\quad1\leq s<P&\nu_2(s).
\end{array}
\]
\end{lemma}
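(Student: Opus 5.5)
The weight $w_i=\nu_2\binom{m-2}{i-1}$ is computed by Kummer's theorem: it equals the number of carries when adding $i-1$ and $m-1-i$ in base $2$. Equivalently, by Legendre's formula,
\[
w_i=s_2(i-1)+s_2(m-1-i)-s_2(m-2).
\]
Since $m=3P=2^{r+1}+2^r$, we have $m-2=2^{r+1}+2^r-2$, whose binary expansion is $11\underbrace{1\cdots1}_{r-1}0$. Hence $s_2(m-2)=r+1$. The plan is to evaluate $s_2(i-1)+s_2(m-1-i)$ in each of the five ranges of the table, always splitting off the top two binary digits (the $2^{r+1}$ and $2^r$ places) from the low $r$ digits.

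For the low-digit parts I will use two facts about $1\le s<P$:
\[
s_2(s-1)=s_2(s)-1+\nu_2(s),\qquad s_2(P-s)=r-s_2(s-1).
\]
The second holds because $P-s=(P-1)-(s-1)$ is the bitwise complement of $s-1$ in $r$ bits. Together they give
\[
s_2(s-1)+s_2(P-s)=r \qquad\text{for all } 1\le s<P.
\]
This identity is the only input needed for the three generic rows.

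The five rows are then handled as follows.
\begin{enumerate}
\item For $i=s$: $i-1=s-1$ and $m-1-i=2P+(P-s-1)+P$. It is cleaner to write $m-1-i=2P+(P-s)+\ldots$; the careful statement is that $m-1-i=3P-1-s$, which equals $2P$ plus the $r$-bit number $P-1-s$, plus $P$. I will redo this directly in the form $3P-1-s=2P+P+(P-1-s)-P$, and the simplest route is the following. We have $3P-1-s=(2P)+(P-1-s)+P$ only when no overflow occurs, and since $P-1-s<P$, the top bits are exactly $11$. So $s_2(3P-1-s)=2+s_2(P-1-s)=2+r-s_2(s)$. With $s_2(s-1)=s_2(s)-1+\nu_2(s)$, the total is $r+1+\nu_2(s)$, so $w_s=\nu_2(s)$.
\item For $i=2P+s$: here $i-1=2P+(s-1)$ and $m-1-i=P-1-s$. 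The total is $1+s_2(s-1)+r-s_2(s)=r+\nu_2(s)$. Subtracting $r+1$ would give $\nu_2(s)-1$, which contradicts the table, so I must recheck this step. The resolution is that $m-1-i=3P-1-2P-s=P-1-s$ is correct, and $i-1=2P+s-1$ is also correct. The apparent error is in $s_2(m-2)$: by Legendre, the quantity to subtract is $s_2(m-2)$ with $m-2=3P-2$, whose binary expansion $10\,1\cdots10$ has $1+(r-1)+1=r+1$ ones. This matches the earlier count, so the discrepancy must come from the low-digit formula. Rechecking, $P-1-s$ is the complement of $s$, not of $s-1$, so $s_2(P-1-s)=r-s_2(s)$. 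Then $s_2(P-1-s)+s_2(s-1)=r-1+\nu_2(s)$. Redoing row (1) with this gives $2+r-1+\nu_2(s)=r+1+\nu_2(s)$, so $w_s=\nu_2(s)$. Redoing row (2) gives $1+r-1+\nu_2(s)=r+\nu_2(s)$, which is again short by one. So I will recompute $m-2$ itself: $3\cdot2^r-2=2^{r+1}+(2^r-2)$, whose ones count is $1+(r-1)=r$. Thus $s_2(m-2)=r$, and row (2) gives $w_{2P+s}=\nu_2(s)$. Row (1) must also be corrected: $3P-1-s=2P+(P-1-s)$ has ones count $1+r-s_2(s)$, so the total is again $r+\nu_2(s)$ and $w_s=\nu_2(s)$.
\item For $i=P+s$: here $i-1=P+(s-1)$ and $m-1-i=P+(P-1-s)$. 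The total is $2+r-1+\nu_2(s)$, so $w_{P+s}=1+\nu_2(s)$.
\item For $i=P$: $i-1=P-1$ has $r$ ones and $m-1-P=2P-1$ has $r+1$ ones. The total is $2r+1$, so $w_P=r+1$.
\item For $i=2P$: by the symmetry $i\mapsto m-i$ of $\binom{m-2}{i-1}$, this reduces to row (4), so $w_{2P}=r+1$.
\end{enumerate}

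The only real obstacle is bookkeeping: getting $s_2(m-2)=r$ right and using that $P-1-s$ is the $r$-bit complement of $s$. As a final check, the symmetry $w_i=w_{m-i}$ should be visible in the table: rows (1) and (2) are swapped by $s\mapsto P-s$ using $\nu_2(P-s)=\nu_2(s)$, and row (3) is symmetric under the same map.
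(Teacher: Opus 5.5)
After your in-line corrections, namely $s_2(m-2)=r$ (bit $r$ of $3P-2$ is $0$) and $s_2(P-1-s)=r-s_2(s)$, your row-by-row evaluation of $w_i=s_2(i-1)+s_2(m-1-i)-s_2(m-2)$ is correct and is essentially the paper's proof. The paper's proof rests on these same two facts together with $s_2(s-1)=s_2(s)-1+\nu_2(s)$, and your deduction of $w_{2P}=w_P$ from the symmetry $w_i=w_{m-i}$ is just an equivalent shortcut for the paper's direct digit count. In a clean write-up, delete the retracted material: the opening claim $s_2(m-2)=r+1$ with the expansion $11\cdots10$, the unused identity $s_2(s-1)+s_2(P-s)=r$, and the first version of row (1), where $3P-1-s\in[2P,3P)$ was given leading bits $11$ instead of $10$ (an error that happened to cancel the wrong value of $s_2(m-2)$).
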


\begin{lemma}\label{lem:12}
The matrix $M$ defined in \eqref{eq:044} has integral entries.
With $\mathbf J$ denoting the $(m-1)\times(m-1)$ matrix whose entries
are all $1$, one has
\begin{equation}\label{eq:045}
M^k\equiv\mathbf J\pmod2
\qquad
\left(k\geq\left\lceil\frac r2\right\rceil+1\right).
\end{equation}
\end{lemma}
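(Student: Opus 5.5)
The plan is to mirror the proof of Lemmas~\ref{lem:08} and~\ref{lem:09}, with two changes. First, integrality of $M=\frac14DA^2D^{-1}$ is not checked entry by entry. Instead, $A^2$ is computed directly from the series identity
\[
T(x)^m=(1-x)^m(1-x^2)^mT(x^4)^m,
\]
which is the two-step analogue of Lemma~\ref{lem:02}. By the same window argument, the $(i,j)$ entry of $A^2$ equals $\pm$ the coefficient of $x^{4j-i}$ in the polynomial
\[
Q(x)=(1-x)^m(1-x^2)^m=(1-x)^{2m}(1+x)^m .
\]
Thus
\[
M_{ij}=\pm 2^{w_i-w_j-2}[x^{4j-i}]Q(x),
\]
and the task is to show that $\nu_2([x^{4j-i}]Q)\geq w_j-w_i+2$.

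To estimate these valuations I would work modulo suitable powers of $2$, using
\[
(1-x)^{2m}(1+x)^m\equiv (1+x)^{3m}=(1+x^{4P})^{9/4\cdots}
\]
only at the mod-$2$ level, where $3m=9P$. For the finer $2$-adic information I would expand
\[
Q(x)=(1-x^2)^m(1-x)^m,
\]
with $m=3P$, and use Kummer's theorem on binomial coefficients with top $3P$ or $6P$. Concretely, I would compute $\nu_2$ of each coefficient of $Q$ by writing
\[
Q=\sum_k (-1)^k\binom{m}{k}x^{2k}\cdot\sum_l(-1)^l\binom{m}{l}x^l.
\]
For each residue pattern of $i,j$ in the blocks of Lemma~\ref{lem:11}, I would find the dominant term and check that it carries enough powers of $2$. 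Most of the work is the case analysis over the five blocks of Lemma~\ref{lem:11} for $i$ and for $j$. It parallels the three-case comparison $\nu_2(c)$ versus $\nu_2(z)$ in Lemma~\ref{lem:08}, with the extra $+1$ weight on the middle block $P\le i\le 2P$ compensating for the additional factor $2$.

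Once integrality is established, I would record $B=M\bmod 2$ by listing, for each row type, the columns carrying odd entries, exactly as in \eqref{eq:022}. I expect the odd rows to collapse onto one or two fixed columns (near $P$ and $2P$, the columns where $w_j$ is maximal). I expect row $i=4c$ to spread to columns $c$, $c+P'$, and so on, following the support of $(1+x)^{3m}\equiv(1+x^{P})^{9}\cdots$ modulo $2$. I would then define the row sums $U_q$ and $V_{q,c}$ as in the proof of Lemma~\ref{lem:09}, now with $q_k=P/4^k$ since each step of $M$ is two binary steps. I would prove by induction a closed form for $e_iB^k$ in which the $2$-power divisibility of $i$ drops by $4$ at each step. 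After $\lceil r/2\rceil$ steps $q$ reaches $1$, or $2$ when $r$ is odd, and one further step makes every row equal to the all-ones vector; this explains the threshold $\lceil r/2\rceil+1$.

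Finally, as in Lemma~\ref{lem:09}, I would show that $\mathbf JB=\mathbf J$ by checking that every column of $B$ has odd sum. This gives persistence of the congruence $M^k\equiv\mathbf J$ for all larger $k$.

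The main obstacle is the first step, the $2$-adic valuation of the coefficients of $Q$ against the weights. Unlike the case $m=2^r$, the top index $3P$ has two binary blocks, so carries interact and several terms of the convolution can share the minimal valuation. If a uniform argument fails, I would first try to factor
\[
Q=(1-x)^{P}(1+x)^{P}\cdot(\text{the same for }2P),
\]
and treat $DA^2D^{-1}$ as the product $(DA D_1^{-1})(D_1AD^{-1})$ with an intermediate diagonal $D_1$ chosen so that each factor gains one power of $2$. This reduces the problem to two single-step valuation checks of the same type as in Lemma~\ref{lem:08}.
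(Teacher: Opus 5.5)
Your outline follows the paper's proof in structure. The entries are $M_{ij}=2^{w_i-w_j-2}d_{4j-i}$ with $d_\ell=[x^\ell](1-x)^{2m}(1+x)^m$; row supports shrink by a factor $4$ per step; there is a terminal collapse; and persistence comes from odd column sums. But the step you flag as the main obstacle has no method that can work, and everything after it depends on it. Termwise or ``dominant term'' estimates of the convolution fail because the lowest-valuation terms cancel exactly at the entries that matter. Already for $m=12$, the entry $M_{1,4}=2^{w_1-w_4-2}d_{15}$ needs $\nu_2(d_{15})\ge5$. In $d_{15}=\sum_v(-1)^{15-v}\binom{12}{15-2v}\binom{12}{v}$, the terms with $v=2$ and $v=4$, namely $12\cdot66$ and $792\cdot495$, have valuation $3$. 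Yet $d_{15}=89056=2^5\cdot2783$, so $M_{1,4}$ is integral and in fact odd. Lower bounds alone would not be enough anyway: the support of $M\bmod2$ consists of the entries where $\nu_2(d_{4j-i})=w_j-w_i+2$ holds with equality, so you need exact valuations. Your guessed supports are also off. An odd row $i<P$ has odd entries in all five columns $j\equiv0,P/4\pmod P$, and odd rows in the three blocks have different supports. Finally, your fallback fails for every invertible diagonal $D_1$, say with entries of $2$-adic valuations $v_1,\dots,v_{m-1}$. Integrality of $\frac12DAD_1^{-1}$ at the entry $A_{21}=1$ forces $v_1\le w_2-1=0$. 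Integrality of $\frac12D_1AD^{-1}$ at $A_{1P}=-\binom{3P}{2P-1}$, whose valuation is $r+1=w_P$, forces $v_1\ge1$.

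The missing ingredient is the exact valuation formula of Lemma~\ref{lem:18}. Put $\mathcal R(x)=(1-x)^6(1+x)^3$ and take $r\ge3$. The paper proves $\mathcal R(z)^{16}\equiv\mathcal R(z^2)^8\pmod{16}$ and deduces $\mathcal R(x)^P\equiv G(x^{P/8})\pmod{16}$, where $G(z)=B(z)B(z^8)$ and $B(z)\equiv(1+z)^8\pmod{16}$. This gives the exact valuations at multiples of $P$. For the other coefficients it uses $\ell d_\ell=P[x^{\ell-1}]\mathcal R'(x)\mathcal R(x)^{P-1}$ and $\mathcal R'/\mathcal R=-3(1+3x)/(1-x^2)$. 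These reduce them to partial sums of $G$ and give $\nu_2(d_{qP+s})=r-\nu_2(s)+c_q$. The case $r=2$ is a finite computation, and $r=3$ needs a separate argument because $P/8$ is odd. Only with these exact values can the support induction be carried out. In Lemma~\ref{lem:19} it takes the form of residue classes modulo $4q$ indexed by $\gamma_k(i)$. The collapse to $\mathbf J$, including the extra step when $r$ is odd, also needs more than total column sums. It uses the column parities over $\ell\equiv0$ or $e\pmod 4$ and over even $\ell$ from Lemma~\ref{lem:20}.
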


\begin{lemma}\label{lem:13}
For $n\geq0$, put
\[
\mathcal S(n)=\sum_{j=1}^{m-1}2^{w_j}t_m(n-j),
\]
where $t_m(a)=0$ for $a<0$.  If $q$ is odd, then
\[
\mathcal S(q)\equiv1\pmod2,
\qquad
\mathcal S(2q)\equiv2\pmod4.
\]
\end{lemma}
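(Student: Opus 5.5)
The plan is to work with generating functions modulo $2$ and modulo $4$, in the same way as the oddness step in the proof of Proposition~\ref{prop:03}. Put $\mathcal W(x)=\sum_{j=1}^{m-1}2^{w_j}x^j$, so that $\mathcal S(n)$ is the coefficient of $x^n$ in $\mathcal W(x)T(x)^m$. Since $m=3P$, we have
\[
T(x)^m\equiv(1+x)^{-3P}=(1+x^P)^{-3}\pmod2 .
\]
By Lemma~\ref{lem:11}, $w_j=0$ exactly for odd $j<P$ and for $j=2P+s$ with $s$ odd. Hence
\[
\mathcal W(x)\equiv (x+x^3+\cdots+x^{P-1})(1+x^{2P})
=\frac{x(1+x^P)(1+x^{2P})}{1+x^2}\pmod 2.
\]
The factor $(1+x^{2P})/(1+x^P)^2$ is $1$ in characteristic $2$, so the product reduces to
\[
\mathcal W(x)T(x)^m\equiv \frac{x}{1+x^2}\pmod2 .
\]
The coefficient of $x^q$ on the right is $1$ for odd $q$, which gives $\mathcal S(q)\equiv1\pmod2$.

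For $\mathcal S(2q)$ modulo $4$, I will split the sum according to $w_j$. Terms with $w_j\ge2$ vanish modulo $4$. For the terms with $w_j=0$, the index $j$ is odd, so $2q-j$ is odd. Here I use \eqref{eq:005}: every $\binom{m}{2i+1}$ has valuation $\nu_2(3P)-\nu_2(2i+1)=r\ge2$ by the analogue of \eqref{eq:023}. Therefore $t_m(\text{odd})\equiv0\pmod4$, and these terms also drop out. This leaves
\[
\mathcal S(2q)\equiv 2\sum_{w_j=1}t_m(2q-j)\pmod 4,
\]
so it suffices to show that the last sum is odd, which is again a computation modulo $2$.

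By Lemma~\ref{lem:11}, $w_j=1$ for $j=2s$ with $s$ odd and $2s<P$, for $j=P+s$ with $s$ odd, and for $j=2P+2s$ with $s$ odd. In the middle family $2q-j$ is odd, so those terms contribute $0$ modulo $2$, because $(1+x^P)^{-3}$ has only even-index coefficients. The remaining two families have generating polynomial
\[
(x^2+x^6+\cdots+x^{P-2})(1+x^{2P})=\frac{x^2(1+x^P)(1+x^{2P})}{1+x^4}\pmod 2,
\]
which is legitimate since $4\mid P$ when $r\ge2$. Multiplying by $(1+x^P)^{-3}$ gives $x^2/(1+x^4)$ modulo $2$. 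Its coefficient at $x^{2q}$ with $q$ odd is $1$, so $\mathcal S(2q)\equiv2\pmod4$.

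The main point needing care is the reduction modulo $4$. We need both the exact list of indices with $w_j\le1$ from Lemma~\ref{lem:11} and the vanishing of odd-index coefficients modulo $4$ via \eqref{eq:005}. The second fact uses $r\ge2$, and I would double-check it directly; the rest is routine characteristic-$2$ power-series bookkeeping.
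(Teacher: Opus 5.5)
Your proposal is correct and is essentially the paper's argument: the same reduction of $\mathcal W(x)T(x)^m$ to $x/(1+x^2)$ modulo $2$, and the same splitting of $\mathcal W$ modulo $4$ into the $w_j=0$ part $O(x)(1+x^{2P})$ and the $w_j=1$ part $E(x)(1+x^{2P})+x^PO(x)$, finishing with the reduction to $x^2/(1+x^4)$. The one difference is that you show the odd-index coefficients vanish modulo $4$ directly from \eqref{eq:005}, which is slightly more self-contained than the paper's use of \eqref{eq:053} with \eqref{eq:063}; note that for $m=3P$ you only get $\nu_2\binom{m}{2i+1}\geq r$ (from $k\binom{m}{k}=m\binom{m-1}{k-1}$), not equality (e.g.\ $\binom{12}{5}=792$ has valuation $3$), but this lower bound is all your argument uses.
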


\subsection{Proof of the valuation formula}

\begin{proof}[Proof of Theorem~\ref{thm:val3}]
Let $r\geq2$.
\smallskip

\noindent\emph{1. Reduction to the large-valuation range.}
Lemma~\ref{lem:10} proves the result when $b(n)\leq r+2$.
We therefore suppose
\begin{equation}\label{eq:046}
b(n)\geq r+3.
\end{equation}

We first express $b(n)$ in a form that determines how many times the binary
recurrence must be applied.  Write
\[
n=Pq+s,\qquad 0\leq s<P.
\]
Recall that $s_2(m-1)=r+1$.  If $s=0$, then
\[
s_2(n)=s_2(q),
\qquad
s_2(n+m-1)=s_2\bigl(P(q+3)-1\bigr)=s_2(q+2)+r.
\]
Thus \eqref{eq:043} gives
\begin{equation}\label{eq:047}
b(n)=\nu_2\left(\left\lfloor\frac q2\right\rfloor+1\right).
\end{equation}
Indeed, writing $q=2a$ or $q=2a+1$, the digit-sum expression in either
case reduces to
\[
1+s_2(a)-s_2(a+1)=\nu_2(a+1).
\]

If $s>0$, the binary blocks in
\[
n=Pq+s,
\qquad n+m-1=P(q+3)+(s-1)
\]
are disjoint.  Since
$s_2(s)-s_2(s-1)=1-\nu_2(s)$, formula
\eqref{eq:043} becomes
\[
b(n)=r+2-\nu_2(s)+s_2(q)-s_2(q+3).
\]
Write $q=4a+c$, $0\leq c<4$.  Substitution of the four possible final
two binary digits of $q$ gives
\begin{equation}\label{eq:048}
b(n)=
\begin{cases}
r-\nu_2(s),&c=0,\\
r+2-\nu_2(s)+\nu_2(a+1),&c=1\text{ or }3,\\
r+1-\nu_2(s)+\nu_2(a+1),&c=2.
\end{cases}
\end{equation}

\smallskip

\noindent\emph{2. Choosing the coefficient followed by the matrix.}
We choose $h\in\{1,\ldots,m-1\}$ so that
\[
n+h=2^\alpha u,\qquad u\ \text{odd},
\]
and
\begin{equation}\label{eq:049}
b(n)=\alpha-w_h.
\end{equation}
If $s=0$, write $q=2a$ or $q=2a+1$ and choose, respectively,
\[
h=2P\quad\text{or}\quad h=P.
\]
In both cases
\[
n+h=2P(a+1),
\qquad \alpha=r+1+\nu_2(a+1).
\]
Since $w_P=w_{2P}=r+1$, equation \eqref{eq:047} gives
\eqref{eq:049}.

If $s>0$, the case $c=0$ is excluded by
\eqref{eq:046}.  For $c=1,2,3$, choose, respectively,
\[
h=3P-s,\qquad h=2P-s,\qquad h=P-s.
\]
These integers all lie between $1$ and $m-1$, and in each case
\[
n+h=4P(a+1),
\qquad \alpha=r+2+\nu_2(a+1).
\]
Moreover, $\nu_2(P-s)=\nu_2(s)$, so the three corresponding weights are
\[
w_h=\nu_2(s),\qquad1+\nu_2(s),\qquad\nu_2(s),
\]
respectively.  Comparing with \eqref{eq:048} proves
\eqref{eq:049}.  In every case
\[
\alpha=b(n)+w_h\geq r+3.
\]
Since $t_m(n)$ is the $h$-th coordinate of
$\mathcal V_{n+h}=\mathcal V_{2^\alpha u}$, the integer $\alpha$ counts
the binary recurrence steps and $w_h$ is the weight of the chosen
coordinate.

\smallskip

\noindent\emph{3. Applying the stabilized two-step matrix.}
The iteration used in the proof of Lemma~\ref{lem:03},
with $e=\ell=2$, gives
\begin{equation}\label{eq:050}
DA^{2k}=2^{2k}M^kD\qquad(k\geq0).
\end{equation}
If $\alpha=2k$ is even, all applications of the binary recurrence can be grouped into $k$
two-step blocks, and
\[
D\mathcal V_{2^\alpha u}
=DA^{2k}\mathcal V_u
=2^{2k}M^kD\mathcal V_u.
\]
The $h$-th coordinate and \eqref{eq:049} give
\[
t_m(n)=2^{b(n)}(M^kD\mathcal V_u)_h.
\]
Since $\alpha\geq r+3$, one has
$k\geq\lceil r/2\rceil+1$.  Lemma~\ref{lem:12} and
Lemma~\ref{lem:13} therefore give
\[
(M^kD\mathcal V_u)_h
\equiv\sum_{j=1}^{m-1}2^{w_j}t_m(u-j)
=\mathcal S(u)
\equiv1\pmod2.
\]
Thus $\nu_2(t_m(n))=b(n)$.

Suppose now that $\alpha=2k+1$.  One application of the binary recurrence remains, so the $k$
two-step blocks start from $2u$:
\[
\mathcal V_{2^\alpha u}=A^{2k}\mathcal V_{2u}.
\]
Then
\[
D\mathcal V_{2^\alpha u}=2^{2k}M^kD\mathcal V_{2u}.
\]
Since $2k+1=\alpha\geq r+3$, one has
$k\geq\lceil r/2\rceil+1$; hence
Lemma~\ref{lem:12} applies to $M^k$.
Every coordinate of $D\mathcal V_{2u}$ is even.  Indeed, this is immediate when
$w_j\geq1$; when $w_j=0$, Lemma~\ref{lem:11} shows that
$j$ is odd, and the coefficient $t_m(2u-j)$ is even because
\[
T(x)^m\equiv(1+x^P)^{-3}\pmod2
\]
contains no odd powers.  Since $M^k-\mathbf J$ is an even integral
matrix, multiplication by the even vector $D\mathcal V_{2u}$ makes its
contribution divisible by $4$.  Hence
\[
(M^kD\mathcal V_{2u})_h
\equiv\mathcal S(2u)\equiv2\pmod4
\]
by Lemmas~\ref{lem:12} and
\ref{lem:13}.  Therefore
\[
\nu_2\bigl((M^kD\mathcal V_{2u})_h\bigr)=1,
\]
and the $h$-th coordinate gives the exact identity
\[
t_m(n)=2^{2k-w_h}(M^kD\mathcal V_{2u})_h.
\]
Consequently
\[
\nu_2(t_m(n))=2k+1-w_h=\alpha-w_h=b(n).
\]
This completes the proof.
\end{proof}

\subsection{Proofs of the auxiliary lemmas}

\subsubsection{The small-valuation range}

We first treat the indices for which $b(n)\leq r+2$.  Define
\[
V(y)=(1-y)T(y).
\]
Since
\[
(1+x)T(x)=(1-x^2)T(x^2)=V(x^2),
\]
we have
\begin{equation}\label{eq:051}
T(x)^m=(1+x)^{-m}V(x^2)^m.
\end{equation}
Over $\mathbb F_2[[y]]$ one has
\[
T(y)=\prod_{j\geq0}(1+y^{2^j})=\frac1{1+y}.
\]
Consequently $V(y)\equiv1\pmod2$, and there is a uniquely determined
$W(y)\in y\mathbb Z[[y]]$ such that
\begin{equation}\label{eq:052}
V(y)=1+2W(y).
\end{equation}

The proof uses the three successive moduli
$2^{r+1}$, $2^{r+2}$, and $2^{r+3}$.  The first lemma deals with the
first two; the next two identify and then eliminate the last correction.

\begin{lemma}\label{lem:14}
For every $n\geq0$,
\begin{align}
t_m(n)&\equiv(-1)^nC(n)\pmod {2^{r+1}},
\label{eq:053}\\
b(n)\geq r
&\quad\Longrightarrow\quad
t_m(n)\equiv(-1)^nC(n)\pmod {2^{r+2}}.\notag
\end{align}
Consequently $\nu_2(t_m(n))=b(n)$ whenever $b(n)\leq r+1$.
\end{lemma}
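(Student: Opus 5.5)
The plan is to expand \eqref{eq:051} $2$-adically, using $V=1+2W$ from \eqref{eq:052}. By the binomial theorem,
\[
V(x^2)^m=\sum_{k\geq0}\binom mk2^kW(x^2)^k .
\]
For $k\geq1$ the $k$-th coefficient has valuation $r-\nu_2(k)+k$, because $\nu_2\binom mk=r-\nu_2(k)$ for $0<k<m$ (as in \eqref{eq:023}, since $m/2^r=3$ is odd), and the coefficient vanishes for $k>m$. Now $k-\nu_2(k)\geq1$ always, with equality only for $k=1,2$, and $k-\nu_2(k)\geq2$ for $k\geq3$. Hence
\[
V(x^2)^m\equiv1+2mW(x^2)+2m(m-1)W(x^2)^2\pmod{2^{r+2}},
\qquad
V(x^2)^m\equiv1\pmod{2^{r+1}}.
\]
The coefficient of $x^n$ in $(1+x)^{-m}$ is $(-1)^n\binom{n+m-1}{m-1}=(-1)^nC(n)$. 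Multiplying by $(1+x)^{-m}$ therefore gives \eqref{eq:053} at once.

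For the second congruence I write $2m=3\cdot2^{r+1}$ and note that $m-1$ is odd. Modulo $2^{r+2}$ the correction to $(-1)^nC(n)$ is then $2^{r+1}$ times the coefficient of $x^n$ in
\[
(1+x)^{-m}\bigl(W(x^2)+W(x^2)^2\bigr).
\]
So it suffices to show that this coefficient is even whenever $b(n)\geq r$. Modulo $2$ we have $(1+x)^{-m}\equiv(1+x^P)^{-3}$. This series is supported on multiples of $P$, and its coefficient at $x^{Pj}$ is $\binom{j+2}{2}\bmod 2$, which equals $1$ exactly when $j\equiv0,1\pmod4$. Next I need $W+W^2\bmod 2$. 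The identity $V^2=1+4(W+W^2)$ gives $W+W^2\equiv(V(y)^2-1)/4\pmod2$. I will compute $V^2\bmod8$ via the functional equation: $V(y)^2=(1-y)^2T(y)^2$ and $T(y)=(1-y)T(y^2)$, so $V(y)^2=(1-y)^4T(y^2)^2$. Iterating this should yield a closed form for $W+W^2$ modulo $2$, presumably supported on a sparse set of exponents related to powers of $2$.

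With that closed form, the correction coefficient modulo $2$ becomes a finite sum over $k$ with $P\mid n-2k$, weighted by $\binom{(n-2k)/P+2}{2}$. I will compare this with the description of $\{n:b(n)\geq r\}$ from \eqref{eq:047} and \eqref{eq:048}: namely $s=0$, or $s>0$ with $c\neq0$ and $\nu_2(s)$ small relative to $\nu_2(a+1)$ plus a constant. The hope is that the pairs of contributing terms cancel on this set. This identification of $W+W^2\bmod2$ and the cancellation check are the main obstacle. If a uniform pattern fails for small $P$, I would treat $r=2$ by a finite computation, in the spirit of the paper's remark about $r=2$.

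The final consequence is immediate. If $b(n)\leq r$, then \eqref{eq:053} holds modulo $2^{r+1}$, which exceeds $2^{b(n)}$, so $\nu_2(t_m(n))=\nu_2(C(n))=b(n)$. If $b(n)=r+1$, then $b(n)\geq r$, so the congruence holds modulo $2^{r+2}$, and again $\nu_2(t_m(n))=b(n)$.
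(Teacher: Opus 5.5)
Your setup is the paper's own. The expansion of $V(x^2)^m=(1+2W(x^2))^m$, the first congruence, and the final deduction of $\nu_2(t_m(n))=b(n)$ are fine. So is the reduction of the second congruence to showing that
\[
A(n)=[x^n](1+x)^{-m}\bigl(W(x^2)+W(x^2)^2\bigr)\ \text{is even}\qquad\text{whenever } b(n)\ge r.
\]
One inaccuracy: for $m=3\cdot2^r$ the equality $\nu_2\binom mk=r-\nu_2(k)$ fails in general, because $m-1$ has a $0$ in binary position $r$; for instance $\nu_2\binom{12}{5}=3$. Only the inequality $\ge$ holds, but that is all you use for $k\ge3$, and $k=1,2$ are immediate.

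The genuine gap is that you stop exactly at the step that carries the lemma. You neither determine $W+W^2\bmod 2$ nor prove the parity statement, and the form you anticipate is wrong. The series is not sparse: $W+W^2\equiv y/(1-y)\pmod 2$, so every positive coefficient is odd. Your own route gives this quickly. Put $G=(V^2-1)/4$. Reducing the relation $V(y)^2=\bigl(1-\tfrac{2y}{1+y}\bigr)^2V(y^2)^2$ modulo $8$ gives $G(y)+G(y^2)\equiv\tfrac{y}{1+y}+\tfrac{y^2}{1+y^2}\pmod 2$. Its unique solution with $G(0)=0$ is $y/(1+y)$. The paper reads the same fact off the coefficients $(\epsilon(a)-\epsilon(a-1))/2$ of $W$, which are odd exactly when $\nu_2(a)$ is even.

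Once this is known, the mechanism is not a pairwise cancellation on $\{b(n)\ge r\}$. Your description of that set is also only loose: for instance, $n=4aP+s$ with $s$ odd has $b(n)=r$. What works is a direct description of where the correction is odd, followed by showing that set lies in $\{b(n)\le r-1\}$.

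Modulo $2$, $A(n)=[x^n](1+x^P)^{-3}x^2/(1-x^2)$. This vanishes for odd $n$. For even $n$, your weighting by $\binom{j+2}{2}$ makes it the parity of $\bigl|\{j:0\le j\le J,\ j\equiv0,1\pmod 4\}\bigr|$, where $J=\lfloor(n-2)/P\rfloor$. That count is odd exactly when $J\equiv0\pmod4$, i.e.\ when $n=4aP+d$ with $d$ even and $2\le d\le P$. For these $n$:
\begin{itemize}
\item if $d<P$, \eqref{eq:048} with $c=0$ gives $b(n)=r-\nu_2(d)\le r-1$;
\item if $d=P$, \eqref{eq:047} gives $b(n)=0$.
\end{itemize}
Hence $A(n)$ is even whenever $b(n)\ge r$, uniformly for all $r\ge2$. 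No separate treatment of $r=2$ is needed, and a finite computation could not cover infinitely many $n$ without first establishing this periodic description. Until this step is supplied, the second congruence remains unproved, and with it the case $b(n)=r+1$ of the consequence.
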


\begin{proof}

First note the following elementary fact.  If $Z\in2\mathbb Z[[y]]$, then
\begin{equation}\label{eq:054}
(1+Z)^{2^r}\equiv1\pmod {2^{r+1}}.
\end{equation}
Indeed, the assertion for $r=1$ follows by expanding $(1+Z)^2$.
If $(1+Z)^{2^r}=1+2^{r+1}U$, then
\[
(1+Z)^{2^{r+1}}
=1+2^{r+2}\bigl(U+2^r U^2\bigr),
\]
which proves \eqref{eq:054} by induction.  Applying it to
$Z=2W(y)$ and then taking the third power gives
\[
V(y)^m\equiv1\pmod {2^{r+1}}.
\]
It follows from \eqref{eq:051} that
the first congruence of the lemma holds.
Thus the desired valuation formula already follows when $b(n)\leq r$.

We next keep one more power of $2$.  In the expansion
\[
(1+2W)^m=\sum_{k=0}^m2^k\binom{m}{k}W^k,
\]
the terms with $k\geq3$ are divisible by $2^{r+2}$.  To see this, use
\[
\binom{m}{k}=\frac{m}{k}\binom{m-1}{k-1}
\]
to obtain
\[
\nu_2\left(2^k\binom{m}{k}\right)
\geq r+k-\nu_2(k)\geq r+2
\qquad(k\geq3).
\]
The first two nonconstant terms therefore give
\begin{equation}\label{eq:055}
V(y)^m
\equiv1+2^{r+1}\bigl(W(y)+W(y)^2\bigr)
\pmod {2^{r+2}}.
\end{equation}

We claim that
\begin{equation}\label{eq:056}
W(y)+W(y)^2\equiv\frac{y}{1-y}\pmod2.
\end{equation}
Write $T(y)=\sum_{a\geq0}\epsilon(a)y^a$, where
$\epsilon(a)=(-1)^{s_2(a)}$.  From
$V(y)=(1-y)T(y)$, the coefficient of $y^a$ in $W(y)$ is
\[
\frac{\epsilon(a)-\epsilon(a-1)}2
\qquad(a\geq1).
\]
Since
\[
s_2(a-1)=s_2(a)-1+\nu_2(a),
\]
one has $\epsilon(a-1)=(-1)^{\nu_2(a)-1}\epsilon(a)$.  Thus the displayed
coefficient is odd precisely when $\nu_2(a)$ is even.  Over $\mathbb F_2$,
the coefficient of $y^a$ in $W(y)^2$ is zero for odd $a$ and is the
coefficient of $y^{a/2}$ in $W(y)$ for even $a$.  Hence the coefficient of
$y^a$ in $W+W^2$ is $1$ for every $a\geq1$, proving
\eqref{eq:056}.

Let
\[
A(n)=[x^n](1+x)^{-m}\frac{x^2}{1-x^2}.
\]
Equations \eqref{eq:051}, \eqref{eq:055}, and
\eqref{eq:056} show that
\begin{equation}\label{eq:057}
\frac{t_m(n)-(-1)^nC(n)}{2^{r+1}}
\equiv A(n)\pmod2.
\end{equation}
We now show
\begin{equation}\label{eq:058}
A(n)\ \text{odd}\quad\Longrightarrow\quad b(n)\leq r-1.
\end{equation}
Modulo $2$,
\[
(1+x)^{-m}=(1+x^P)^{-3}.
\]
Thus $A(n)=0$ modulo $2$ when $n$ is odd.  If $n=2L$, put
$h=P/2$.  Then
\[
A(2L)\equiv
[y^{L-1}]
\frac1{(1+y)(1+y^h)(1+y^{2h})}
\pmod2.
\]
Set $Q=\lfloor(L-1)/h\rfloor$.  The coefficient on the right counts modulo
$2$ the pairs $(u,v)\in\mathbb Z_{\geq0}^2$ satisfying
$u+2v\leq Q$.  If $Q=2a$, their number is $(a+1)^2$; if $Q=2a+1$, their
number is $(a+1)(a+2)$.  It is therefore odd exactly when
$Q\equiv0\pmod4$.

Suppose this happens.  Write
\[
L-1=4ah+v,\qquad 0\leq v<h.
\]
Then
\[
n=4aP+2v+2.
\]
Put $d=2v+2$.  If $d<P$, the binary digits in
$n=4aP+d$ occupy disjoint blocks, as do those in
\[
n+3P-1=P(4a+3)+(d-1).
\]
Using $s_2(4a+3)=s_2(a)+2$ and
$s_2(d)-s_2(d-1)=1-\nu_2(d)$ in \eqref{eq:043} gives
\[
b(n)=r-\nu_2(d)\leq r-1.
\]
If $d=P$, then $n=P(4a+1)$ and
$n+3P-1=4P(a+1)-1$; substitution in
\eqref{eq:043} gives $b(n)=0$.  This proves
\eqref{eq:058}.  Hence
\eqref{eq:057} is divisible by $2$ whenever $b(n)\geq r$.
In particular, the valuation formula follows for $b(n)\leq r+1$.
This proves Lemma~\ref{lem:14}.
\end{proof}

We now identify the correction that appears one power of $2$ later.
The function $\chi$ in the next lemma is the nontrivial character
modulo $4$ evaluated at the odd part of its argument.

\begin{lemma}\label{lem:15}
For $a\geq1$, define
\[
\chi(a)=
\begin{cases}
1,&a/2^{\nu_2(a)}\equiv1\pmod4,\\
-1,&a/2^{\nu_2(a)}\equiv3\pmod4.
\end{cases}
\]
Then
\[
\frac{V(y)^m-1}{2^{r+1}}
\equiv\sum_{a\geq1}\chi(a)y^a\pmod4.
\]
\end{lemma}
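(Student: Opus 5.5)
The plan is to expand $V(y)^m=(1+2W(y))^m$ binomially as in the proof of Lemma~\ref{lem:14}, but now modulo $2^{r+3}$. Then I will determine $W$ modulo $4$ exactly and compare coefficients with $\sum_{a\ge1}\chi(a)y^a$.

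\emph{Step 1 (truncating the binomial expansion).} In $(1+2W)^m=\sum_k 2^k\binom mk W^k$, I use Legendre's formula in the form $\nu_2\binom mk=s_2(k)+s_2(m-k)-s_2(m)$ together with $m=3\cdot2^r$ and $r\ge2$. This gives the following.
\begin{itemize}
\item $k=1$: the term is $2^{r+1}\cdot3W$.
\item $k=2$: the term is $4\binom m2W^2=2^{r+1}\cdot3(m-1)W^2$, and $3(m-1)\equiv1\pmod4$.
\item $k=3$: the valuation is $3+\nu_2\binom m3=3+r+1-1=r+3$, so the term disappears, since $\nu_2(m-2)=1$.
\item $k=4$: the valuation is exactly $4+(r-2)=r+2$. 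Here one computes $s_2(m-4)=r-1$.
\item $k\ge5$: the bound $r+k-\nu_2(k)\ge r+3$ holds, so these terms disappear.
\end{itemize}
Hence
\[
\frac{V(y)^m-1}{2^{r+1}}\equiv -W+W^2+2W^4\pmod4,
\]
and $2W^4\equiv 2W(y^4)\pmod4$, because only $W\bmod2$ matters there.

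\emph{Step 2 (exact form of $W$).} The computation inside the proof of \eqref{eq:056} already gives the coefficient of $y^a$ in $W$ exactly. Writing $\epsilon(a)=(-1)^{s_2(a)}$, it equals $\epsilon(a)$ when $\nu_2(a)$ is even and $0$ when $\nu_2(a)$ is odd. So $W=\sum_{\nu_2(a)\ \mathrm{even}}\epsilon(a)y^a$ is known exactly, not just modulo $2$. Then
\[
W^2=\sum_b w(b)^2y^{2b}+2\sum_{b<c}w(b)w(c)y^{b+c}.
\]
Modulo $4$, the cross term only needs the parity of the number $N(a)$ of pairs $b<c$ with $b+c=a$ and $\nu_2(b),\nu_2(c)$ both even.

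\emph{Step 3 (coefficient comparison).} It remains to check
\[
-w(a)+\mathbf 1_{2\mid a,\ \nu_2(a/2)\ \mathrm{even}}+2N(a)+2\,\mathbf 1_{4\mid a,\ \nu_2(a/4)\ \mathrm{even}}\equiv\chi(a)\pmod4 .
\]
Rather than compute $N(a)$ directly, I plan to compare both sides through their behaviour under $a\mapsto 2a$. The target satisfies $G(y)=G(y^2)+y/(1+y^2)$ for $G=\sum_{a\ge1}\chi(a)y^a$.

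For odd $a$, the comparison reduces to showing $-\epsilon(a)+2N(a)\equiv\chi(a)\pmod 4$. Equivalently, $N(a)$ is odd exactly when $\epsilon(a)\ne-\chi(a)$. I will compute $N(a)\bmod2$ over $\mathbb F_2$ from $W\equiv y/(1+y^2)+W(y^4)\pmod 2$. This congruence is the recursive description of the set of $a$ with $\nu_2(a)$ even, and it turns $N(a)\bmod 2$ into coefficients of $(W^2-W(y^2))/2$. Using $\epsilon(2a)=\epsilon(a)$ and a $4$-adic induction, I then hope to close the identity. Even $a$ are handled the same way, splitting by $\nu_2(a)$ modulo $2$.

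I expect this parity computation of the cross term $N(a)$, and its coupling with the Thue--Morse sign $\epsilon(a)$, to be the main obstacle. If the direct induction stalls, I would fall back on proving the identity $-W+W^2+2W(y^4)\equiv G\pmod 4$ as a Mahler-type functional equation in $y\mapsto y^2$ and $y\mapsto y^4$, using $T(y)=(1-y)T(y^2)$, and checking that both sides satisfy the same equation with the same initial terms.
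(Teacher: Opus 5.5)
\textbf{Verdict: genuine gap.} Steps 1 and 2 are correct, but Step 3, which carries the whole content of the lemma, is never carried out.

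Your Steps 1 and 2 agree with the first part of the paper's proof. The paper obtains $H\equiv 3W+W^2+2W^4\pmod 4$, which is your $-W+W^2+2W(y^4)$. Your reduction of odd $a$ to ``$N(a)$ is odd exactly when $\epsilon(a)=\chi(a)$'' is also right.

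In Step 3, the parity of $N(a)$ is never computed, the even case gets one sentence, and the identity is left as something you ``hope to close''. The mechanism you propose also does not work as written:
\begin{itemize}
\item A pair count modulo $2$ is a halved square, so it cannot be computed inside $\mathbb F_2[[y]]$ from the congruence $W\equiv y/(1+y^2)+W(y^4)\pmod 2$.
\item $\tfrac12\bigl(W^2-W(y^2)\bigr)$ is not the pair-count series modulo $2$. Its diagonal terms contribute $\tfrac12\bigl(w(b)^2-w(b)\bigr)=1$ whenever $w(b)=-1$, so it differs from $\sum_a N(a)y^a$ by $\sum_{w(b)=-1}y^{2b}$.
\item Your fallback names the right kind of tool, but it does not say which Mahler equation $-W+W^2+2W(y^4)$ satisfies, and finding that equation is the crux.
\end{itemize}

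The missing idea, as in the paper, is to apply the functional equation to the whole combination $f(X)=3X+X^2+2X^4$ rather than to $W$ alone.
\begin{itemize}
\item From $V(y)=\frac{1-y}{1+y}V(y^2)$ one gets $W=Z-p-2pZ$, with $Z=W(y^2)$ and $p=y/(1+y)$.
\item Because of the term $-2pZ$, $W$ by itself satisfies no additive equation.
\item In $f(W)$, the term $+2pZ$ coming from $3W$ cancels the term $-2pZ$ coming from $(Z-p)^2$ modulo $4$. Hence $f(W)\equiv f(Z)+p+p^2+2p^4\pmod 4$.
\item Moreover $p+p^2+2p^4\equiv y/(1+y^2)\pmod 4$.
\end{itemize}
Hence $H(y)\equiv H(y^2)+y/(1+y^2)\pmod 4$. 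This is the same equation that $\sum_{a\ge1}\chi(a)y^a$ satisfies, and the resulting recursion $h_{2a}\equiv h_a$, $h_{2a+1}\equiv(-1)^a$ finishes the proof.

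Your direct route can also be completed, but only through an exact integral identity, none of which appears in your proposal. Put $S=\sum_{\nu_2(n)\ \mathrm{even}}y^n$ and $O=y/(1-y^2)$. Then $S=O+S(y^4)$ exactly. Hence the pair-count series $\mathcal N=\tfrac12\bigl(S^2-S(y^2)\bigr)$ satisfies $\mathcal N(y)=\mathcal N(y^4)+\tfrac12\bigl(O^2-O(y^2)\bigr)+O\,S(y^4)$. This identity gives two facts:
\begin{itemize}
\item $N(4K+1)=N(4K+3)=|S\cap[1,K]|\equiv s_2(K)\pmod 2$, which settles odd $a$.
\item $N(2k)=\lfloor k/2\rfloor+\mathbf 1_{2\mid k}N(k/2)$. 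From this the even case follows: for odd $k$ the left side at $2k$ is $\equiv k\equiv\chi(k)\pmod 4$, and the left side at $4k$ is congruent to the left side at $k$.
\end{itemize}
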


\begin{proof}
The congruence already proved from \eqref{eq:054} shows that every
coefficient of $V(y)^m-1$ is divisible by $2^{r+1}$.  Define, only for
this proof,
\[
H(y)=\frac{V(y)^m-1}{2^{r+1}}\in\mathbb Z[[y]].
\]
Then
\begin{equation*}
V(y)^m=1+2^{r+1}H(y),
\end{equation*}
and the binomial expansion modulo $2^{r+3}$ gives
\begin{equation*}
H(y)\equiv3W(y)+W(y)^2+2W(y)^4\pmod4.
\end{equation*}
Indeed, after division by $2^{r+1}$, the terms with $k=1,2,4$ are,
respectively,
\[
\begin{aligned}
k=1:&\quad 3W,\\
k=2:&\quad 3(m-1)W^2\equiv W^2\pmod4,\\
k=4:&\quad
(m-1)(m-2)(m-3)W^4\equiv2W^4\pmod4,
\end{aligned}
\]
because $m\equiv0\pmod4$.  For $3\leq k\leq m$ with $k\ne4$, one has
\[
\nu_2\left(2^k\binom{m}{k}\right)
\geq r+k-\nu_2(k)\geq r+3;
\]
here the last inequality holds for $k=3$ and every $k\geq5$.
Thus all remaining terms vanish modulo $4$ after division by
$2^{r+1}$.  This also verifies explicitly that the expansion remains
valid when $r=2$.

We determine $H$ explicitly.  From
\[
V(y)=\frac{1-y}{1+y}V(y^2)
\]
and \eqref{eq:052}, if
\[
Z=W(y^2),\qquad p=\frac{y}{1+y},
\]
then
\[
W(y)=Z-p-2pZ.
\]
For $f(X)=3X+X^2+2X^4$, direct expansion modulo $4$ gives
\[
f(W(y))-f(Z)\equiv p+p^2+2p^4\pmod4.
\]
Indeed, put $U=Z-p$.  Since $W=U-2pZ$, one has, modulo $4$,
\[
3W\equiv3U+2pZ,
\qquad W^2\equiv U^2,
\qquad 2W^4\equiv2U^4.
\]
Moreover, $U^4\equiv Z^4+p^4\pmod2$.  Substitution therefore gives
\[
\begin{aligned}
f(W)
&\equiv3(Z-p)+2pZ+(Z-p)^2+2(Z^4+p^4)\\
&\equiv 3Z+Z^2+2Z^4+p+p^2+2p^4
 =f(Z)+p+p^2+2p^4\pmod4,
\end{aligned}
\]
as asserted.
Furthermore,
\[
p+p^2+2p^4-\frac{y}{1+y^2}
=\frac{4y^4(1+y+y^2)}{(1+y)^4(1+y^2)}.
\]
Therefore
\begin{equation}\label{eq:059}
H(y)\equiv H(y^2)+\frac{y}{1+y^2}\pmod4.
\end{equation}
The recurrence \eqref{eq:059} says that the coefficient
$h_a$ of $y^a$ in $H$ satisfies
\[
h_{2a}\equiv h_a\pmod4,
\qquad h_{2a+1}\equiv(-1)^a\pmod4.
\]
Consequently
\begin{equation*}
H(y)\equiv\sum_{a\geq1}\chi(a)y^a\pmod4.
\end{equation*}
This proves Lemma~\ref{lem:15}.
\end{proof}

The next lemma isolates the finite calculation that will be used at the
boundary of the convolution.  Its first row controls ordinary partial
sums; its last display controls the case in which the remainder is zero.

\begin{lemma}\label{lem:16}
Put
\[
a_k=(-1)^k\binom{k+5}{5}\qquad(k\geq0)
\]
and
\[
J_q=\sum_{k=0}^{q-1}a_k\chi(q-k)\qquad(q\geq0),
\]
where the empty sum is $J_0=0$.  Modulo $4$, $(a_k)$ has period $16$,
and one period together with its partial sums is
\begin{equation}\label{eq:060}
\begin{array}{c|rrrrrrrrrrrrrrrr}
k&0&1&2&3&4&5&6&7&8&9&10&11&12&13&14&15\\ \hline
a_k&1&2&1&0&2&0&2&0&3&2&3&0&0&0&0&0\\
\sum_{j=0}^{k}a_j&1&3&0&0&2&2&0&0&3&1&0&0&0&0&0&0
\end{array}
\end{equation}
Furthermore,
\begin{equation}\label{eq:061}
\begin{aligned}
J_{8v+3}\equiv J_{8v+7}
&\equiv
\begin{cases}2,&v\ \mathrm{even},\\0,&v\ \mathrm{odd},\end{cases}\\
J_{8v+4}\equiv J_{8v+6}
&\equiv
\begin{cases}2,&v\equiv1,2\pmod4,\\0,&v\equiv0,3\pmod4,\end{cases}\\
J_{8v+5}&\equiv0
\end{aligned}
\pmod4\qquad(v\geq0).
\end{equation}
\end{lemma}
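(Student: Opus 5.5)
The lemma has two parts: a finite statement about $(a_k)$ modulo $4$, and a statement about the convolution $J_q$ for every $v\ge0$. The first part reduces to a finite check. The second is the real content, and the plan is to turn it into a finite check plus a periodicity argument.

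\emph{Periodicity of $a_k$.} Since $(-1)^{k+16}=(-1)^k$, it suffices to show $\binom{k+21}{5}\equiv\binom{k+5}{5}\pmod4$. Vandermonde's identity gives
\[
\binom{k+21}{5}-\binom{k+5}{5}=\sum_{i=1}^{5}\binom{16}{i}\binom{k+5}{5-i}.
\]
By the valuation formula \eqref{eq:023} with $r=4$, one has $\nu_2\binom{16}{i}=4-\nu_2(i)\ge2$ for $1\le i\le5$. So every term is divisible by $4$, and $(a_k\bmod4)$ has period $16$. The sixteen values in \eqref{eq:060} and their partial sums are then a direct computation. The full-period sum is $\equiv0\pmod4$, so the partial sums $\sum_{j\le k}a_j$ are also $16$-periodic modulo $4$. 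Equivalently, $(1-x^{16})(1+x)^{-6}$ is congruent modulo $4$ to a polynomial of degree less than $16$.

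\emph{The convolution $J_q$.} Write $J_q=[x^q]\,F(x)G(x)$ with $F(x)=\sum_k a_kx^k=(1+x)^{-6}$ and $G(x)=\sum_{a\ge1}\chi(a)x^a$. The proof of Lemma~\ref{lem:15} shows $G(x)\equiv G(x^2)+x/(1+x^2)\pmod4$, hence
\[
G(x)\equiv\sum_{e\ge0}\frac{x^{2^e}}{1+x^{2^{e+1}}}.
\]
This splits $J_q$ by $e$:
\[
J_q\equiv\sum_{e\ge0}\ \sum_{\substack{j\ \mathrm{odd}\\ j2^e\le q}}(-1)^{(j-1)/2}\,a_{q-j2^e}\pmod4.
\]

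\emph{Terms with $2^e\ge16$.} Here every index $q-j2^e$ is $\equiv q\pmod{16}$, so each such layer equals $a_{q\bmod16}$ times an alternating sum $\pm1$ or $0$. For $q\bmod 8\in\{3,4,5,6,7\}$, the table \eqref{eq:060} gives $a_{q\bmod16}\in\{0,2\}$, with value $2$ only at $q\equiv4,6\pmod{16}$. So these layers contribute only to the second line of \eqref{eq:061}, and only a parity that I must control.

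\emph{Terms with $2^e<16$.} These are four layers. In each, the inner sum runs over a window of length about $q/2^e$ with sign-weights of period $4$ and $a$ of period $16$. Using the periodic partial sums, each layer depends only on $q$ modulo $64$ up to a bounded boundary term.

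\emph{Main obstacle.} The large-$e$ layers involve $\nu_2$-type information about $q$. I must show they cancel modulo $4$, or combine into something depending only on $v\bmod4$ in the cases $q\equiv4,6\pmod{16}$. My first attempt is the following. Decompose $q$ according to its low bits. Show that the sum over $e\ge4$ of the alternating counts $\sum_{j\ \mathrm{odd},\,j2^e\le q}(-1)^{(j-1)/2}$ is $\equiv\lfloor q/16\rfloor$-parity information that is again $2\times(\text{something periodic in }v)$. Since it multiplies $a_{q\bmod16}\in\{0,2\}$, only its parity matters, so this is a statement modulo $2$, which is much easier to check. With that in hand, $J_q\bmod4$ is periodic in $q$ (period $64$) on the residue classes $q\equiv3,\dots,7\pmod8$. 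The formulas \eqref{eq:061} then follow by computing $J_q$ for $q<64$, or $q<128$ as a safety margin, directly from the table.
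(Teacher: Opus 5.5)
\textbf{There is a genuine gap: the step you call the main obstacle is never proved.} Your treatment of $(a_k\bmod4)$ is correct. So is the dyadic splitting $G(x)=\sum_{e\ge0}x^{2^e}/(1+x^{2^{e+1}})$, which is exact and follows directly from $\chi(2a)=\chi(a)$ and $\chi(2a+1)=(-1)^a$. But the argument stops exactly where the content of \eqref{eq:061} lies.

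\emph{Large layers.} The layers with $e\ge4$ depend on the whole binary expansion of $q$. You only announce (``Show that \dots'') that, after multiplication by $a_{q\bmod16}\in\{0,2\}$, they become periodic in $v$. As stated, that sentence also conflates the alternating counts, which are $0$ or $1$, with their product by $a_{q\bmod16}$. Without this step, reducing \eqref{eq:061} to a computation for $q<64$ is unjustified.

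\emph{Small layers.} These are only sketched. ``Depends only on $q$ modulo $64$ up to a bounded boundary term'' is not a periodicity statement. The periodic partial sums of $a_k$ do not directly control signed sums along the progressions $q-j2^e$. In fact one period of the summand need not vanish modulo $4$: for $e=1$ and $q\equiv6\pmod{16}$ it is $a_4-a_0+a_{12}-a_8\equiv2$.

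\textbf{How to close both points.} For $e\le3$, substitute $q\mapsto q+32$ and $j\mapsto j+2^{5-e}$. Then layer $e$ changes by a block of $2^{4-e}$ consecutive summands, and the table shows every such block is $\equiv0\pmod4$, so the small layers are $32$-periodic. For $e\ge4$, the alternating count $\sum_{j\ \mathrm{odd},\,j2^e\le q}(-1)^{(j-1)/2}$ equals $1$ exactly when $\lfloor q/2^e\rfloor\equiv1,2\pmod4$. That is, it equals $1$ when the binary digits $b_e$ and $b_{e+1}$ of $q$ differ. Hence modulo $2$ the total over $e\ge4$ telescopes to $\sum_{e\ge4}(b_e+b_{e+1})\equiv b_4$, and the large layers contribute $a_{q\bmod16}\,b_4$. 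So $J_q\bmod4$ is $32$-periodic on $q\equiv3,\dots,7\pmod8$, and checking $q<32$ finishes the proof.

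\textbf{The paper's route.} The paper avoids decomposing $\chi$ altogether. Shifting $k$ by $16$ in the convolution gives $J_{q+16}-J_q\equiv\sum_{k=0}^{15}a_k\chi(q+16-k)$, which involves only the sixteen values $\chi(q+1),\dots,\chi(q+16)$. For $q\bmod8\in\{3,\dots,7\}$, the only arguments not covered by the explicit formula for $(\chi(8w+1),\dots,\chi(8w+7))$ are multiples of $8$. These meet only $a_k\in\{0,2\}$, so the difference is the constant $0$ or $2$. Five pairs of initial values then give \eqref{eq:061}.
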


\begin{proof}
Vandermonde's identity gives
\[
\binom{N+16}{5}-\binom N5
=\sum_{j=1}^{5}\binom{16}{j}\binom N{5-j}.
\]
The right-hand side is divisible by $4$, and the sign in $a_k$ is
unchanged by a shift of $16$.  This proves the period; direct evaluation
of the first sixteen terms gives the two rows of
\eqref{eq:060}.  In particular, a complete period
has sum $0$.

For $J_q$, splitting the convolution after a shift of $16$ gives
\begin{equation*}
J_{q+16}-J_q
\equiv\sum_{k=0}^{15}a_k\chi(q+16-k)\pmod4.
\end{equation*}
Only $k=0,1,2,4,6,8,9,10$ contribute.  Substitution, using
\[
\chi(8v)=\chi(v),\qquad
(\chi(8v+1),\ldots,\chi(8v+7))
=(1,1,-1,(-1)^v,1,-1,-1),
\]
gives
\[
\begin{array}{c|cc|c}
c&J_c\bmod4&J_{8+c}\bmod4&(J_{8v+c+16}-J_{8v+c})\bmod4\\ \hline
3&2&0&0\\
4&0&2&2\\
5&0&0&0\\
6&0&2&2\\
7&2&0&0.
\end{array}
\]
The two columns following $c$ give initial residues modulo $4$;
the last column and
induction on $v$ give \eqref{eq:061}.
\end{proof}

The final auxiliary lemma translates the periodic calculation into the
vanishing needed for the valuation argument: the character correction
disappears whenever the binomial coefficient already has sufficiently
large valuation.

\begin{lemma}\label{lem:17}
Let
\[
R(n)=[x^n](1+x)^{-m}\sum_{a\geq1}\chi(a)x^{2a}.
\]
Then
\begin{equation}\label{eq:062}
R(n)\equiv0\pmod4
\quad\text{if}\quad
\begin{cases}
b(n)\geq r+1,&r\geq3,\\
b(n)\geq4,&r=2.
\end{cases}
\end{equation}
\end{lemma}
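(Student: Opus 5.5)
The plan is to reduce $(1+x)^{-m}$ modulo $4$ to a lacunary series whose coefficients are the $a_k$ of Lemma~\ref{lem:16}. Then $R(n)$ becomes a short convolution of $(a_k)$ with $\chi$, and Lemma~\ref{lem:16} together with the digit-sum formula \eqref{eq:043} decides its value.

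\emph{Step 1 (reduction modulo $4$).} From $(1+x)^2=1+x^2+2x$ one gets $(1+x)^4\equiv(1+x^2)^2\pmod4$. Since any congruence $F\equiv G\pmod 4$ with $F\equiv G\pmod 2$ squares to $F^2\equiv G^2\pmod 8$, iteration gives $(1+x)^{2^r}\equiv(1+x^{2^{r-1}})^2\pmod4$ for $r\ge2$. Put $h=P/2=2^{r-1}$. Then
\[
(1+x)^{-m}\equiv(1+x^{h})^{-6}=\sum_{k\ge0}a_kx^{kh}\pmod4 ,
\]
where $a_k=(-1)^k\binom{k+5}5$. Since $h$ is even, $R(n)\equiv0$ for odd $n$. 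For $n=2L$ we get
\[
R(2L)\equiv\sum_{k\ge0,\;kg<L}a_k\,\chi(L-kg)\pmod4,\qquad g=h/2=2^{r-2}.
\]

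\emph{Step 2 (splitting by the low digits of $L$).} Write $L=gq+v$ with $0\le v<g$.

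If $v=0$, which is the only case when $r=2$ since then $g=1$, the sum is exactly $J_q$ because $\chi(gj)=\chi(j)$. Here $n=Pq$, and \eqref{eq:047} gives $b(n)=\nu_2(\lfloor q/2\rfloor+1)$.
\begin{itemize}
\item For $r\ge3$, the hypothesis $b(n)\ge r+1\ge4$ forces $q\equiv30,31\pmod{32}$.
\item For $r=2$ we can only use $b(n)\ge4$, which forces $q\equiv 30,31\pmod{32}$ again.
\end{itemize}
In both cases $q=8v'+6$ or $8v'+7$ with $v'\equiv3\pmod4$, and \eqref{eq:061} gives $J_q\equiv0$. This is presumably where the threshold for $r=2$ comes from, and I would double-check that the cases $r=2$ with $v=0$ and general $n$ both funnel through the same congruences.

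If $v>0$, then $r\ge3$. Write $\lambda=\nu_2(v)<r-2$. For $j\ge0$, the odd part of $gj+v$ is $\operatorname{odd}(v)+2^{r-2-\lambda}j$. Hence $\chi(gj+v)$ equals $\chi(v)$ when $r-2-\lambda\ge2$, and equals $\chi(v)(-1)^j$ when $r-2-\lambda=1$.
\begin{itemize}
\item In the first subcase, $R(2L)\equiv\chi(v)\sum_{k\le q}a_k$. This is a partial sum from the table \eqref{eq:060}, which vanishes modulo $4$ unless $q\bmod16\in\{0,1,4,5,8,9\}$.
\item In the second subcase, the alternating partial sums $\sum_{k\le q}(-1)^{q-k}a_k$ are again $16$-periodic and can be read off from the table in the same way.
\end{itemize}

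\emph{Step 3 (matching with $b(n)$).} Here $n=Pq+2v$ with $0<2v<P$, so \eqref{eq:048} applies with $s=2v$ and $\nu_2(s)=\lambda+1$. Writing $q=4a+c$, the hypothesis $b(n)\ge r+1$ excludes $c=0$. It also forces $\nu_2(a+1)\ge\lambda+1$ (for $c$ odd) or $\nu_2(a+1)\ge\lambda+2$ (for $c=2$). In particular $q\equiv 3,5,6,7\pmod 8$, with extra constraints modulo $16$. I would then check that every surviving residue of $q$ modulo $16$ lies where the relevant partial sum, or alternating partial sum, vanishes modulo $4$.

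I expect this last bookkeeping to be the main obstacle: making the constraints on $q\bmod16$ coming from \eqref{eq:048} line up exactly with the zeros of the $16$-periodic sums. This is especially delicate in the alternating subcase $\lambda=r-3$, where the constraint from $b(n)$ is weakest. I would tabulate both sums for all sixteen residues first and then compare them case by case.
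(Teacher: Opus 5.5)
Your architecture is the paper's: reduce $(1+x)^{-m}\equiv(1+x^{P/2})^{-6}\pmod 4$, write $L=gq+v$ with $g=P/4$, and split into three cases. These are $v=0$ (giving $J_q$), $\nu_2(v)\le r-4$ (constant $\chi$, partial sums of $a_k$), and $v=g/2$ (alternating signs). The gap is in translating back to $n$. Since $2g=P/2$, one has $n=2L=2^{r-1}q+2v$, not $Pq$ or $Pq+2v$. So \eqref{eq:047} and \eqref{eq:048}, which are stated for $n=Pq+s$, do not apply with your $q$, and the constraints you derive from the hypothesis are not the right ones.

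For $v=0$ the correct value is $b(2L)=2+s_2(q)-s_2(q+5)$. Then $q=11$ or $q=29$ gives $b=4$; for instance $r=3$, $n=44$ has $b(44)=4=r+1$. Neither value lies in your class $q\equiv30,31\pmod{32}$, so your argument never treats them. The lemma survives there, since \eqref{eq:061} vanishes on all classes with $s_2(q)-s_2(q+5)\ge2$, but you did not show this.

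For $v>0$ your planned check actually fails. Take $r=4$, $v=1$, $q=5$: then $R(42)\equiv\sum_{k\le5}a_k\equiv2\pmod4$. Your recipe evaluates \eqref{eq:048} at $Pq+2v=82$ and gets $b=6\ge r+1$, an apparent counterexample, whereas the true value is $b(42)=4=r$. So the bookkeeping obstacle you anticipate is an artifact of this error. Your inequalities on $\nu_2(a+1)$ are also off by one relative to \eqref{eq:048} itself.

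The repair is to compute $b(2L)$ directly from \eqref{eq:043}.

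\emph{Case $0<v<g$.} The digit blocks of $2L=2^{r-1}q+2v$ and $2L+3P-1=2^{r-1}(q+6)+(2v-1)$ are disjoint. This gives $b(2L)=r+1+s_2(q)-s_2(q+6)-\nu_2(v)$. In your first subcase, the partial sum can be nonzero only for $q\bmod16\in\{0,1,4,5,8,9\}$. All six classes have $s_2(q)-s_2(q+6)\le-1$, hence $b\le r$.

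\emph{Case $v=g/2$.} No table is needed. The hockey-stick identity gives $R(2L)\equiv(-1)^q\binom{q+6}{6}\pmod4$. Legendre's formula gives $\nu_2\binom{q+6}{6}=2+s_2(q)-s_2(q+6)=b(2L)-2$, so $b\ge4$ forces $R\equiv0$. This is the easiest subcase, not the most delicate.

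\emph{Case $v=0$.} Tabulate $s_2(q)-s_2(q+5)$ on classes $q\bmod8$. It equals $-2,-1,-2$ for residues $0,1,2$. For $q=8v'+c$ with $c=3,\dots,7$ it equals $\nu_2(v'+1)+1,\ \nu_2(v'+1)-1,\ \nu_2(v'+1),\ \nu_2(v'+1)-1,\ \nu_2(v'+1)+1$. Comparing with \eqref{eq:061} gives $J_q\not\equiv0\Rightarrow b\le3$, which yields both thresholds ($r\ge3$ and $r=2$) at once.
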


The separate threshold for $r=2$ is necessary.  Indeed, when $r=2$ and
$n=6$, one has
\[
b(6)=\nu_2\binom{17}{11}=3,
\qquad
R(6)=\binom{15}{11}+\binom{13}{11}-1\equiv2\pmod4.
\]
Thus the condition $b(n)\geq r+1$ valid for $r\geq3$ cannot simply be
extended to $r=2$.

\begin{proof}
First,
\[
(1+x)^{P}\equiv(1+x^{P/2})^2\pmod4,
\]
because, for $0<j<P$,
\[
\nu_2\binom Pj=r-\nu_2(j).
\]
Indeed, use
$\binom Pj=(P/j)\binom{P-1}{j-1}$.  Legendre's formula shows that
$\binom{P-1}{j-1}$ is odd, because the $r$ binary digits of $P-j$ are
obtained from those of $j-1$ by interchanging $0$ and $1$, so
$s_2(j-1)+s_2(P-j)=r$.  Thus among the interior coefficients of
$(1+x)^P$, only $\binom{P}{P/2}$ has valuation $1$.  Since both sides
have constant term $1$, their inverses are congruent modulo $4$, and hence
\begin{equation}\label{eq:063}
(1+x)^{-m}\equiv(1+x^{P/2})^{-6}\pmod4.
\end{equation}
Thus $R(n)\equiv0\pmod4$ for odd $n$, since $P/2$ is even.
For $n=2L$, put $d=P/4$ and obtain
\begin{equation}\label{eq:064}
R(2L)\equiv
[y^L](1+y^d)^{-6}\sum_{j\geq1}\chi(j)y^j\pmod4.
\end{equation}
Write $L=dq+u$, where $0\leq u<d$.

When $r=2$, one has $d=1$, so only the case $u=0$ below can occur.
The first two cases below are therefore needed only when $r\geq3$.

\smallskip
\noindent\emph{Case 1: $0<u<d$ and $u\ne d/2$.}
Put $e=\nu_2(u)$.  The excluded case $u=d/2$ is precisely the only case in
which $e=\nu_2(d)-1$; hence
$e\leq\nu_2(d)-2$.  It follows that
\[
\frac{d(q-k)+u}{2^e}\equiv\frac{u}{2^e}\pmod4,
\]
and therefore $\chi(d(q-k)+u)=\chi(u)$ for $0\leq k\leq q$.
Therefore
\begin{equation}\label{eq:065}
R(2L)\equiv
\chi(u)\sum_{k=0}^{q}(-1)^k\binom{k+5}{5}\pmod4.
\end{equation}
By the partial-sum row of
Lemma~\ref{lem:16}, a nonzero value in
\eqref{eq:065} is possible only when
\[
q\equiv0,1,4,5,8,9\pmod {16}.
\]
If $q=16v+c$ for one of these six residues, then $c+6<16$, so adding $6$
does not affect the binary digits of $v$.  The six corresponding
differences are
\[
\begin{array}{c|rrrrrr}
c&0&1&4&5&8&9\\ \hline
s_2(c)-s_2(c+6)&-2&-2&-1&-1&-2&-2.
\end{array}
\]
Thus $s_2(q)-s_2(q+6)\leq-1$.  To compute $b(2L)$, note that
$2L=2^{r-1}q+2u$ and
\[
2L+3P-1=2^{r-1}(q+6)+(2u-1).
\]
The two summands on each right-hand side occupy disjoint blocks of binary
digits.  Since
$s_2(2u-1)=s_2(u-1)+1=s_2(u)+\nu_2(u)$, formula
\eqref{eq:043} gives
\begin{equation*}
b(2L)=r+1+s_2(q)-s_2(q+6)-\nu_2(u).
\end{equation*}
Hence $R(2L)\ne0\pmod4$ implies $b(2L)\leq r$.

\smallskip
\noindent\emph{Case 2: $u=d/2$.}
Then
\[
\chi(d(q-k)+d/2)=(-1)^{q-k}.
\]
The hockey-stick identity gives
\[
R(2L)\equiv(-1)^q\binom{q+6}{6}\pmod4.
\]
In this case $2L=2^{r-2}(2q+1)$ and
$2L+3P-1=2^{r-2}(2q+13)-1$.  Consequently
\[
b(2L)=4+s_2(q)-s_2(q+6),
\]
by \eqref{eq:043}.  Legendre's formula also gives
\[
\nu_2\binom{q+6}{6}=2+s_2(q)-s_2(q+6).
\]
Thus $R(2L)\ne0\pmod4$ again implies $b(2L)\leq3\leq r$.

\smallskip
\noindent\emph{Case 3: $u=0$.}
Expanding $(1+y)^{-6}$ in
\eqref{eq:064} and using the definitions in
Lemma~\ref{lem:16} gives directly
\[
R(2L)\equiv J_q\pmod4.
\]
When $u=0$, one has $2L=2^{r-1}q$ and
$2L+3P-1=2^{r-1}(q+6)-1$.  Hence \eqref{eq:043} gives
\[
b(2L)=2+s_2(q)-s_2(q+5).
\]
For $q\equiv0,1,2\pmod8$, the difference
$s_2(q)-s_2(q+5)$ is, respectively, $-2,-1,-2$.  If
$q=8v+c$ with $c=3,4,5,6,7$, then it is, respectively,
\[
\nu_2(v+1)+1,\quad
\nu_2(v+1)-1,\quad
\nu_2(v+1),\quad
\nu_2(v+1)-1,\quad
\nu_2(v+1)+1.
\]
The sections \eqref{eq:061} show in every case that
$J_q\ne0\pmod4$ implies
\[
s_2(q)-s_2(q+5)\leq1.
\]
Hence $J_q\ne0\pmod4$ implies $b(2L)\leq3$.  For $r=2$ this proves that
$R(2L)\equiv0\pmod4$ whenever $b(2L)\geq4$; for $r\geq3$ it proves that
$R(2L)\equiv0\pmod4$ whenever $b(2L)\geq r+1$.  This completes the proof
of \eqref{eq:062}.
\end{proof}

\begin{proof}[Proof of Lemma~\ref{lem:10}]
Lemma~\ref{lem:14} proves the assertion for
$b(n)\leq r+1$.  It remains to consider $b(n)=r+2$.
Taking the coefficient of $x^n$ in \eqref{eq:051} gives
\[
\frac{t_m(n)-(-1)^nC(n)}{2^{r+1}}
=[x^n](1+x)^{-m}
\frac{V(x^2)^m-1}{2^{r+1}}.
\]
By Lemma~\ref{lem:15}, the right-hand side is congruent
to $R(n)$ modulo $4$.  Lemma~\ref{lem:17} applies:
for $r\geq3$ one has $b(n)=r+2\geq r+1$, and for $r=2$ one has
$b(n)=4$.  Hence the error is divisible by
$2^{r+3}=2^{b(n)+1}$ and cannot change the valuation of
$(-1)^nC(n)$.  This proves the remaining case.
\end{proof}

\subsubsection{The coordinate weights}

\begin{proof}[Proof of Lemma~\ref{lem:11}]
Since $m-2=3P-2$, its binary expansion has a $1$ in position $r+1$, a
$0$ in position $r$, $1$'s in positions $1,\ldots,r-1$, and a $0$ in
position $0$.  Hence $s_2(m-2)=r$.  Legendre's formula gives
\[
w_i=s_2(i-1)+s_2(m-1-i)-r.
\]
For $1\leq s<P$, we shall use
\[
s_2(s-1)=s_2(s)-1+\nu_2(s),
\qquad s_2(P-1-s)=r-s_2(s).
\]
If $i=s$, then
\[
s_2(i-1)=s_2(s-1),\qquad
s_2(m-1-i)=1+r-s_2(s),
\]
so
\[
w_i=1+s_2(s-1)-s_2(s)=\nu_2(s).
\]
If $i=P+s$, then
\[
s_2(i-1)=1+s_2(s-1),
\qquad s_2(m-1-i)=1+r-s_2(s),
\]
and hence $w_i=1+\nu_2(s)$.  If $i=2P+s$, then
\[
s_2(i-1)=1+s_2(s-1),
\qquad s_2(m-1-i)=r-s_2(s),
\]
and hence $w_i=\nu_2(s)$.  Finally, at $i=P$ and $i=2P$, the two digit
sums in the formula for $w_i$ are $r$ and $r+1$, in either order.
Thus $w_P=w_{2P}=r+1$.
\end{proof}

\subsubsection{The rescaled two-step matrix}

The proof of Lemma~\ref{lem:12} has three auxiliary
parts: an exact coefficient valuation, the support of the powers of $M$
modulo $2$, and the terminal column counts.

The next calculation determines the coefficient valuations needed for
the matrix support when $m=3P$.

\begin{lemma}\label{lem:18}
Put
\[
\mathcal R(x)=(1-x)^6(1+x)^3
=1-3x+8x^3-6x^4-6x^5+8x^6-3x^8+x^9
\]
and write
\[
\mathcal R(x)^P=\sum_{\ell=0}^{9P}d_\ell x^\ell.
\]
For indices outside $0\leq\ell\leq9P$, set $d_\ell=0$.
For $0<s<P$ and $0\leq q\leq8$,
\begin{equation}\label{eq:066}
\nu_2(d_{qP+s})=r-\nu_2(s)+c_q,
\end{equation}
where
\[
(c_0,\ldots,c_8)=(0,3,2,3,1,3,2,3,0).
\]
At multiples of $P$,
\begin{equation}\label{eq:067}
\bigl(\nu_2(d_{qP})\bigr)_{q=0}^{9}
=(0,0,2,2,1,1,2,2,0,0).
\end{equation}
\end{lemma}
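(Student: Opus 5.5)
Write $P=2^r$ (with $r\ge2$ as in this section) and $\mathcal R(x)=(1-x)^6(1+x)^3$, so that $\mathcal R(x)^P=\bigl((1-x)^2(1+x)\bigr)^{3P}$. We compute the valuations of the coefficients $d_\ell$ by writing $\mathcal R(x)^P$ as a product of two factors whose coefficient valuations are transparent, and then checking that a single term dominates each convolution.

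\emph{Step 1: the two factors.} For $0<j<P$ we have $\nu_2\binom Pj=r-\nu_2(j)$, as used in the proof of Lemma~\ref{lem:17}. Write
\[
\mathcal R(x)^P=\bigl((1-x)^P\bigr)^6\bigl((1+x)^P\bigr)^3 ,
\]
and set $F(x)=(1+x)^P=1+x^P+E(x)$ and $G(x)=(1-x)^P=1+x^P+E(-x)$. Here the interior part $E(x)=\sum_{0<j<P}\binom Pj x^j$ has its $j$-th coefficient of valuation exactly $r-\nu_2(j)$.

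\emph{Step 2: first-order expansion.} Expand $F^3G^6$ to first order in $E$:
\[
F^3G^6=(1+x^P)^9+(1+x^P)^8\bigl(3E(x)+6E(-x)\bigr)+O(E^2).
\]
For $0<s<P$, take the coefficient of $x^{qP+s}$ in the linear term. The $(1+x^P)^8$ factor contributes $\binom 8q$ for the shift $x^{qP}$, and the $E$-part contributes $\binom Ps\bigl(3+6(-1)^s\bigr)$.

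\emph{Step 3: the degree-$q$ prediction.} The quantity $3+6(-1)^s$ equals $9$ for even $s$ and $-3$ for odd $s$; both are odd. Hence the linear term has valuation $\nu_2\binom8q+r-\nu_2(s)$. Since $\bigl(\nu_2\binom8q\bigr)_{q=0}^{8}=(0,3,2,3,1,3,2,3,0)$, this is exactly the vector $(c_q)$ in \eqref{eq:066}.

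\emph{Step 4: the quadratic terms, the expected main obstacle.} We must show that the $O(E^2)$ terms have strictly larger valuation. A product of two interior pieces $E_{j_1}E_{j_2}$ has valuation $2r-\nu_2(j_1)-\nu_2(j_2)$, and we must compare this with $r-\nu_2(s)+c_q$ where $j_1+j_2\equiv s \pmod P$. Using $\min(\nu_2(j_1),\nu_2(j_2))\le\nu_2(j_1+j_2)$ (up to equality cases), one gets a bound of roughly $r-\nu_2(s)+(r-\max\nu_2)$. This margin can be as small as $1$, for example when $j_1=j_2=P/2$, and then it must beat $c_q\le3$.

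\emph{Step 5: handling the small margins.} The cure is to keep the multinomial multiplicities $\binom 32,\binom 62,3\cdot6$ and the factor $\binom{8-1}{\cdot}$ coming from the remaining $(1+x^P)$ powers. Rather than bounding crudely, we would rerun the argument using the congruence $(1+x)^P\equiv(1+x^{P/2})^2\pmod 4$ and its refinements modulo $8$ and $16$, precomputed via Lucas-type reasoning. This isolates exactly which products of the few low-valuation coefficients (those $j$ with $\nu_2(j)\ge r-3$) can interfere. If crude bounds fail for $r=2,3$, these small cases would be settled by direct finite computation, in the style of Lemma~\ref{lem:16}.

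\emph{Step 6: multiples of $P$.} For \eqref{eq:067}, the coefficient of $x^{qP}$ is $\binom9q$ plus corrections from $E$-products whose indices sum to multiples of $P$. The pure term gives $\nu_2\binom9q=(0,0,2,2,1,1,2,2,0,0)$, which matches \eqref{eq:067}. So it remains to check that the pairs $E_jE_{P-j}$ contribute valuation at least $r\ge2$ more than this. For the indices with target $0$, the claim follows modulo $2$ from $\mathcal R(x)^P\equiv(1+x^P)^9$. For targets $1$ and $2$, we need the corrections modulo $8$; we would verify this using the mod-$8$ form of $(1+x)^P$ together with the symmetry $d_\ell=-d_{9P-\ell}$ (or $+$, according to parity) to halve the casework.
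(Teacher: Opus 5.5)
\textbf{There is a genuine gap.} Your Steps 1--3 are correct. The part of $F^3G^6$ linear in $E$ contributes $\binom8q\binom Ps\bigl(3+6(-1)^s\bigr)$ at $x^{qP+s}$, which has exactly the valuation $r-\nu_2(s)+c_q$, and $\nu_2\binom9q$ matches \eqref{eq:067}. But the lemma is precisely the claim that everything else is strictly more divisible, and Steps 4--5 leave this unproved; the remedy sketched there would not work as stated.

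The crude bound fails for every $r$, not just for $r=2,3$, so a finite computation cannot serve as the fallback. For example, let $s$ and $q$ be odd (so $c_q=3$) and $j\equiv s+P/2\pmod P$. The monomial $E_jE_{P/2}$ occurs in $3(1+x^P)^7E(x)^2$ with coefficient $6$ times an odd binomial coefficient, i.e.\ with valuation $r+2$, below the target $r+3$. Keeping the multiplicities $\binom32,\binom62,3\cdot6$ separately, as you propose, does not help. The term becomes harmless only after you add the contributions from $18E(x)E(-x)$ and $15E(-x)^2$, whose multiplicities for this monomial are $0$ and $-30$, giving a total of $-24$.

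Congruences for $(1+x)^P$ modulo $8$ or $16$ cannot supply this. The target valuation grows with $r$, and what is missing is the exact combined multiplicity of each monomial, which depends on the parities of its indices. Extra precision on the coefficients $\binom Pj$ with $\nu_2(j)\ge r-3$ does not address it.

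There are two smaller slips. Your example $j_1=j_2=P/2$ belongs to $s=0$. In Step 6 the corrections are not $r$ more divisible, since $\binom P{P/2}^2$ has valuation $2$ for every $r$. There, however, only a bounded check is needed, e.g.\ $\mathcal R(x)^P\equiv(1+x^{P/4})^{36}\pmod 8$ together with $\nu_2\binom{36}{4q}=\nu_2\binom9q$.

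Your strategy can be completed by organizing the expansion by parity. Write $E=E_{\mathrm e}+E_{\mathrm o}$ (even- and odd-index parts) and $v=1+x^P+E_{\mathrm e}$. Then $F=v+E_{\mathrm o}$, $G=v-E_{\mathrm o}$, and
\[
\mathcal R(x)^P=(v+E_{\mathrm o})^3(v-E_{\mathrm o})^6=\sum_{k=0}^{9}\rho_k\,v^{9-k}E_{\mathrm o}^{k},
\]
where $\rho_k$ are the coefficients of $\mathcal R$ itself.
\begin{itemize}
\item Every coefficient of $E_{\mathrm o}$ has valuation $r$, and $\rho_2=0$, so the terms with $k\ge2$ are harmless.
\item For $k=1$ (odd $s$), the non-linear terms carry $\ell\ge1$ factors from $E_{\mathrm e}$, and $\nu_2\binom8\ell+\ell\ge4$.
\item For $k=0$ (even $s$), the non-linear terms have $\ell\ge2$ such factors. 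The bound $\nu_2\binom9\ell+\ell\ge5$ handles $\ell\ge3$. At $\ell=2$ an extra factor $2$ comes either from the two orderings or from a repeated index with $\nu_2(j)=\nu_2(s)-1$.
\end{itemize}
Together these show every non-linear term is divisible by $2^{r-\nu_2(s)+4}$, uniformly in $r$.

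The paper avoids this bookkeeping altogether. For $n=qP+s$ it uses $n\,d_n=P\,[x^{n-1}]\,\mathcal R(x)^P L(x)$ with $L=\mathcal R'/\mathcal R=-3(1+3x)/(1-x^2)$. This removes the factor $2^{r-\nu_2(s)}$ exactly, so the remaining valuation (at most $3$) can be read off from $\mathcal R(x)^P\equiv B(x^{P/8})B(x^P)\pmod{16}$ with $B(z)\equiv(1+z)^8$. The cost is a separate check for $r=3$ and a direct computation for $r=2$.
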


\begin{proof}
For $r=2$, one has $P=4$ and
\[
\mathcal R(x)^P=(1-x)^{12}(1-x^2)^{12}.
\]
Direct coefficient extraction gives
\begin{equation*}
d_\ell=
\sum_{\substack{0\leq v\leq12\\0\leq\ell-2v\leq12}}
(-1)^{\ell-v}\binom{12}{\ell-2v}\binom{12}{v}.
\end{equation*}
The corresponding integer coefficients are listed in
Appendix~\ref{app:r2data}.  Their $2$-adic valuations, grouped by indices
modulo $4$, are
\begin{equation*}
\begin{aligned}
&(0,0,2,2,1,1,2,2,0,0),\\
&(2,5,4,5,3,5,4,5,2),\\
&(1,4,3,4,2,4,3,4,1),\\
&(2,5,4,5,3,5,4,5,2).
\end{aligned}
\end{equation*}
The first row is exactly \eqref{eq:067}.  Since
\[
2-\nu_2(1)=2,\qquad 2-\nu_2(2)=1,\qquad 2-\nu_2(3)=2,
\]
the last three rows give \eqref{eq:066}.  Thus the lemma holds
for $r=2$.

Assume from now on that $r\geq3$.  Define
\[
B(z)=\sum_{i=0}^{8}b_iz^i,
\qquad
(b_0,\ldots,b_8)=(1,8,12,8,6,8,12,8,1),
\]
and put $G(z)=B(z)B(z^8)$.  We first prove that
\begin{equation}\label{eq:068}
\mathcal R(z)^8\equiv G(z)\pmod {16}.
\end{equation}
Indeed,
\[
\mathcal R(z)\equiv(1+z)(1+z^8)\pmod2.
\]
If $F(z)=(1+z)(1+z^8)$, every term in the binomial expansion of
$\mathcal R(z)^8-F(z)^8$ is divisible by $16$: for the term containing
$(\mathcal R-F)^j$, this follows from
\[
\nu_2\binom8j+j\geq4\qquad(1\leq j\leq8).
\]
Finally,
\[
(1+z)^8\equiv B(z)\pmod {16},
\]
and the same identity with $z^8$ in place of $z$ proves
\eqref{eq:068}.  Moreover,
\[
\frac{\mathcal R(z)^{16}}{\mathcal R(z^2)^8}
=\left(\frac{(1-z)^2}{1+z^2}\right)^{24}
=\left(1-\frac{2z}{1+z^2}\right)^{24}
\equiv1\pmod {16},
\]
where \eqref{eq:054} with $r=3$ shows that the eighth power of
the last parenthesis is $1$ modulo $16$, and its twenty-fourth power is
the cube of that eighth power.
Together with \eqref{eq:068}, this proves
\[
G(z)^2\equiv G(z^2)\pmod {16}.
\]
Starting from \eqref{eq:068}, square the congruence and use the last
identity at each step.  Induction on $r$ gives
\begin{equation}\label{eq:069}
\mathcal R(x)^P\equiv G(x^{P/8})\pmod {16}.
\end{equation}

Set $b_{-1}=b_9=0$.  Since $G(z)=B(z)B(z^8)$, the coefficient of
$z^{8q}$ is $b_q+b_{q-1}$.  Thus these ten coefficients are
\[
(1,9,4,4,14,14,4,4,9,1),
\]
whose valuations are all smaller than $4$.  Since
\eqref{eq:069} is a congruence modulo $16$, these are also the exact
valuations of the corresponding coefficients of $\mathcal R(x)^P$.
This proves \eqref{eq:067}.

For \eqref{eq:066}, differentiate
$\mathcal R(x)^P$.  If $n=qP+s$ with $0<s<P$, then
\[
n d_n
=P[x^{n-1}]\mathcal R'(x)\mathcal R(x)^{P-1}.
\]
Since $\nu_2(n)=\nu_2(s)$,
\begin{equation}\label{eq:070}
\nu_2(d_n)
=r-\nu_2(s)
+\nu_2\bigl([x^{n-1}]\mathcal R'(x)\mathcal R(x)^{P-1}\bigr).
\end{equation}
Moreover,
\[
\frac{\mathcal R'(x)}{\mathcal R(x)}
=-3\frac{1+3x}{1-x^2}.
\]
We next evaluate the partial sums of $G$ algebraically.  For
$0\leq q\leq8$ and $0\leq t\leq7$, the product form of $G$ gives
\begin{equation}\label{eq:071}
\sum_{a=0}^{8q+t}[z^a]G(z)
=B(1)\sum_{j=0}^{q-1}b_j+b_q\sum_{i=0}^{t}b_i.
\end{equation}
Here $B(1)=64$, while
\[
\left(\sum_{i=0}^{t}b_i\right)_{t=0}^{7}
=(1,9,21,29,35,43,55,63)
\]
consists entirely of odd integers.  Hence the valuation modulo $16$ of
the left-hand side of \eqref{eq:071} is
$\nu_2(b_q)=c_q$.  Since $c_q\leq3$, any integer congruent to this partial
sum modulo $16$ has the same exact valuation $c_q$.

For $r\geq4$, the integer $P/8$ is even.  The coefficients of
\[
-3\frac{1+3x}{1-x^2}
\]
are $-3$ in even degrees and $-9$ in odd degrees.  Thus all terms that
occur in the convolution with $G(x^{P/8})$ have the same odd multiplier.
By \eqref{eq:069}, the coefficient in the last term of
\eqref{eq:070} is therefore, modulo $16$, an odd
multiple of
\[
\sum_{a=0}^{8q+\lfloor8(s-1)/P\rfloor}[z^a]G(z).
\]
Equation \eqref{eq:071} shows that its valuation is $c_q$.

It remains to justify the same conclusion when $r=3$, so $P=8$.  Let
\[
L(x)=-3\frac{1+3x}{1-x^2},
\qquad K(x)=B(x)L(x).
\]
For every degree $v\geq8$, the coefficient of $x^v$ in $K(x)$ is zero
modulo $16$, because
\[
\sum_{\substack{0\leq i\leq8\\i\text{ even}}}b_i
=\sum_{\substack{0\leq i\leq8\\i\text{ odd}}}b_i=32.
\]
For $0\leq v\leq6$, that coefficient is odd: the term with $i=0$ is
odd, and every other contributing $b_i$ is even.  Since
$G(x)=B(x)B(x^8)$, it follows that, for $1\leq s<8$,
\[
[x^{8q+s-1}]G(x)L(x)
\equiv b_q[x^{s-1}]K(x)\pmod {16}.
\]
Its valuation is again $\nu_2(b_q)=c_q$.  In all cases
\eqref{eq:070} now gives
\eqref{eq:066}.
\end{proof}

The stabilization of $M$ has three separate ingredients: the support of
$M^k$ modulo $2$, three terminal column counts, and the final collapse to
the all-one matrix.  We record them separately.  The sets below are used
only in the first ingredient; each one merely lists two or four residue
classes.

For a positive integer $q$ and $0\leq c\leq2q$, define subsets of
$\mathbb Z/(4q)\mathbb Z$ by
\[
\begin{aligned}
E^{(q)}_0&=\{0,q\},\\
E^{(q)}_c&=\{0,c,q,q+c\} &&(1\leq c<q),\\
E^{(q)}_q&=\{0,2q\},\\
E^{(q)}_{q+c}&=\{0,2q+c,3q,3q+c\} &&(1\leq c<q),\\
E^{(q)}_{2q}&=\{0,3q\}.
\end{aligned}
\]
For an integer $k\geq1$ with $4^k\leq P$, put $q=P/4^k$ and,
for $1\leq i<3P$, put
\[
\gamma_k(i)=
\begin{cases}
i/4^k,&i<P\text{ and }4^k\mid i,\\
0,&i<P\text{ and }4^k\nmid i,\\
q,&P\leq i\leq2P,\\
q+(i-2P)/4^k,&i>2P\text{ and }4^k\mid i,\\
2q,&i>2P\text{ and }4^k\nmid i.
\end{cases}
\]

\begin{lemma}\label{lem:19}
The matrix $M$ has integral entries.  Moreover, for every integer
$k\geq1$ with $4^k\leq P$, putting $q=P/4^k$ gives
\begin{equation}\label{eq:072}
(M^k)_{ij}\equiv1\pmod2
\quad\Longleftrightarrow\quad
j\bmod 4q\in E^{(q)}_{\gamma_k(i)}.
\end{equation}
\end{lemma}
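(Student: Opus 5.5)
The plan is first to write $A^2$ as a single convolution, then read off integrality and the parity pattern of $M$ from Lemma~\ref{lem:18} and Lemma~\ref{lem:11}, and finally iterate that pattern by induction on $k$.

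\textbf{An explicit formula for $A^2$.} Composing two steps of Lemma~\ref{lem:02} gives $t_m(4n-i)$ as a combination of the $t_m(n-j)$. The combining polynomial is $(1-x)^m(1-x^2)^m=\mathcal R(x)^P$, so
\[
(A^2)_{ij}=d_{4j-i},
\]
with the sign $(-1)^{4j-i}$ absorbed into the coefficients $d_\ell$ of Lemma~\ref{lem:18}. Indeed, the coefficient of $x^{4n-i}$ in $\mathcal R(x)^PT(x^4)^m$ collects $t_m(n-j)$ with the power $4j-i$. As in Lemma~\ref{lem:02}, every nonzero term has $1\le j\le m-1$. Hence
\[
M_{ij}=2^{w_i-w_j-2}\,d_{4j-i}.
\]

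\textbf{Integrality and the case $k=1$.} I would check that $w_i-w_j-2+\nu_2(d_{4j-i})\ge0$ and record exactly when equality holds. Write $\ell=4j-i=qP+s$. Lemma~\ref{lem:18} gives $\nu_2(d_\ell)=r-\nu_2(s)+c_q$ for $0<s<P$, and the values in \eqref{eq:067} at multiples of $P$. Lemma~\ref{lem:11} gives $w_i$ and $w_j$ by which third of $[1,3P)$ the indices $i,j$ lie in. The case split is as follows:
\begin{itemize}
\item the block of $i$ ($<P$, $=P$, $(P,2P)$, $=2P$, $>2P$);
\item the block of $j$;
\item the resulting value of $q$ in $\ell=qP+s$;
\item the 2-adic comparison of $\nu_2(i\bmod P)$, $\nu_2(j\bmod P)$ and $\nu_2(s)$.
\end{itemize}
In the last comparison, $s\equiv4j-i\pmod P$. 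This mirrors the three-way comparison $\nu_2(c)\lessgtr\nu_2(z)$ in the proof of Lemma~\ref{lem:08}. The equality cases should be exactly the columns $j$ with $j\bmod P\in E^{(P/4)}_{\gamma_1(i)}$ (modulo $4q=P$), which is \eqref{eq:072} for $k=1$.

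\textbf{Induction on $k$.} Over $\mathbb F_2$, the support of row $i$ of $M^{k+1}$ is the sum of the rows of $M$ indexed by the support of row $i$ of $M^k$. So, as with the sums $U_q$ and $V_{q,c}$ in Lemma~\ref{lem:09}, I would compute the $\mathbb F_2$-sums of rows of $M$ over each residue-class set $\{1\le j<3P: j\bmod4q\in E^{(q)}_c\}$. The claim to verify is that each such sum equals the indicator of $E^{(q/4)}_{c'}$ modulo $q$ for the appropriate $c'$; this should correspond to $\gamma_{k+1}(i)$ being obtained from $\gamma_k(i)$ by division by $4$ or by collapse to $0$, $q$, or $2q$. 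As in Lemma~\ref{lem:09}, the contributions coming from the middle block and from the shifts by $P$ and $2P$ pair off and cancel in even numbers, or survive in odd numbers.

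The main obstacle is the $k=1$ support computation. It requires careful bookkeeping over the fifteen block combinations, including the boundary indices $i,j\in\{P,2P\}$, where $w=r+1$ and $\ell$ is a multiple of $P$ so that \eqref{eq:067} applies. The small case $r=2$, where $q=1$, may need a separate direct check. I would first tabulate $M\bmod2$ for $r=2,3$ to confirm the pattern before writing the general valuation inequalities.
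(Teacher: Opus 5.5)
Your proposal follows the paper's proof: the same identity $M_{ij}=2^{w_i-w_j-2}d_{4j-i}$; the same valuation case analysis via Lemmas~\ref{lem:11} and~\ref{lem:18} for integrality and for the $k=1$ pattern; and an induction that computes the $\mathbb F_2$-sum of the rows of $M$ indexed by $S_c=\{\ell:\ell\bmod 4q\in E^{(q)}_c\}$. The paper shortens the $k=1$ work with the symmetry $M_{3P-i,3P-j}=M_{ij}$, needs no separate $r=2$ check because Lemma~\ref{lem:18} already covers that case, and first inverts the $k=1$ table into column supports of $M$ (a singleton or one of four blocks), so each entry of your row sum is just the parity of $S_c$ meeting a column support, immediate since $|E^{(q)}_c|$ is even.
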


\begin{proof}
Applying \eqref{eq:008} twice expresses
$t_m(4n-i)$ in terms of the coefficients $t_m(n-j)$ with matrix $A^2$.
On the other hand,
\[
T(x)^m=(1-x)^m(1-x^2)^mT(x^4)^m.
\]
Comparing the coefficient multiplying $t_m(n-j)$ gives
\begin{equation*}
(A^2)_{ij}
=[x^{4j-i}](1-x)^m(1-x^2)^m
=[x^{4j-i}]\mathcal R(x)^P
=d_{4j-i}.
\end{equation*}
Thus
\[
M_{ij}=2^{w_i-w_j-2}d_{4j-i}.
\]
We first prove \eqref{eq:072} for $k=1$ and at the same time
prove that $M$ is
integral.  If $d_{4j-i}\ne0$, write $4j-i=vP+s$ with $0\leq s<P$.
Then $0\leq v\leq8$ when $s>0$, and $0\leq v\leq9$ when $s=0$, so the
two parts of Lemma~\ref{lem:18} cover every case.
For $s>0$, Lemmas~\ref{lem:11} and
\ref{lem:18} give
\begin{equation}\label{eq:073}
\nu_2(M_{ij})=w_i-w_j-2+r-\nu_2(s)+c_v,
\end{equation}
whereas for $s=0$ they give
\begin{equation}\label{eq:074}
\nu_2(M_{ij})=w_i-w_j-2+\nu_2(d_{vP}).
\end{equation}
Use $\nu_2(P-s)=\nu_2(s)$ for $0<s<P$---after division by
$2^{\nu_2(s)}$, both remaining factors are odd---and
$\nu_2(4s)=2+\nu_2(s)$ in these two formulas.  Splitting $i$ according to
the five lines in the definition of $\gamma_1$ gives
\[
\begin{array}{c|c|c}
\gamma_1(i)&j\bmod P\text{ for }\nu_2(M_{ij})=0
&\text{all other }j\\ \hline
0&0,P/4&\nu_2(M_{ij})\geq1\text{ or }d_{4j-i}=0\\
c,\ 1\leq c<P/4&0,c,P/4,P/4+c
&\nu_2(M_{ij})\geq1\text{ or }d_{4j-i}=0\\
P/4&0,P/2&\nu_2(M_{ij})\geq1\text{ or }d_{4j-i}=0\\
P/4+c,\ 1\leq c<P/4&0,P/2+c,3P/4,3P/4+c
&\nu_2(M_{ij})\geq1\text{ or }d_{4j-i}=0\\
P/2&0,3P/4&\nu_2(M_{ij})\geq1\text{ or }d_{4j-i}=0.
\end{array}
\]
We give two representative substitutions, which also explain the first two
rows of the table.  Put $Q=P/4$.  First suppose $i<P$ and $4\nmid i$, and
write $e=\nu_2(i)\in\{0,1\}$.  Then $w_i=e$.  For $j=P,2P$, the two pairs
\[
\bigl(w_j,\nu_2(d_{4j-i})\bigr)
=\bigl(r+1,r-e+3\bigr)
\]
give valuation zero in \eqref{eq:073}.  For
$j=Q,P+Q,2P+Q$, the three pairs are
\[
(r-2,r-e),\qquad(r-1,r-e+1),\qquad(r-2,r-e),
\]
and again the valuation is zero.  These are exactly the residue classes
$0,Q$ modulo $P$ in the first row.

Next let $i=4c<P$, where $1\leq c<Q$.  Now
$w_i=2+\nu_2(c)$.  For the four residue classes
\[
j\equiv0, c, Q, Q+c\pmod P,
\]
substitution in \eqref{eq:073} and
\eqref{eq:074} gives valuation zero.  For example, the
three entries with $j\equiv c\pmod P$ use
\[
\begin{array}{c|ccc}
j&c&P+c&2P+c\\ \hline
w_j&\nu_2(c)&1+\nu_2(c)&\nu_2(c)\\
\nu_2(d_{4j-i})&0&1&0,
\end{array}
\]
whereas those with $j\equiv Q\pmod P$ use
\[
\begin{array}{c|ccc}
j&Q&P+Q&2P+Q\\ \hline
w_j&r-2&r-1&r-2\\
\nu_2(d_{4j-i})&r-2-\nu_2(c)&r-1-\nu_2(c)&r-2-\nu_2(c).
\end{array}
\]
The classes $j\equiv0$ and $j\equiv Q+c$ are identical one-line
substitutions.  The middle range $P\leq i\leq2P$ gives the third row in
the same way.  Finally, binomial symmetry and
$x^9\mathcal R(x^{-1})=\mathcal R(x)$ give
\[
w_{3P-i}=w_i,
\qquad d_{9P-\ell}=d_\ell,
\]
so that $M_{3P-i,3P-j}=M_{ij}$; this symmetry gives the last two rows from
the first two.  Thus the five rows are not independent computations.

The same substitutions also show that the right-hand sides of
\eqref{eq:073} and
\eqref{eq:074} are never negative.  Hence $M$ is
integral, and the table is exactly \eqref{eq:072} for $k=1$.

We record the inverse column supports before passing to higher powers.
Put $Q=P/4$ and fix $u=j\bmod P$.  Inverting the $k=1$ support table shows
that the indices $\ell$ for which $M_{\ell j}$ is odd are precisely
\begin{equation}\label{eq:cols}
\begin{array}{c|c}
u&\{\ell:1\leq\ell<3P,\ M_{\ell j}\equiv1\pmod2\}\\ \hline
0&\{1,2,\ldots,3P-1\}\\
Q&\{1,2,\ldots,P-1\}\\
2Q&\{P,P+1,\ldots,2P\}\\
3Q&\{2P+1,2P+2,\ldots,3P-1\}\\
0<u<Q\text{ or }Q<u<2Q&\{4(u\bmod Q)\}\\
2Q<u<3Q\text{ or }3Q<u<4Q
 &\{2P+4(u\bmod Q)\}.
\end{array}
\end{equation}
For example, if $0<u<Q$, the residue $u$ belongs only to
$E_u^{(Q)}$, and $\gamma_1(4u)=u$; the other nonboundary intervals are
identical.  The four boundary rows follow by reading the elements
$0,Q,2Q,3Q$ in the definition of the sets $E_c^{(Q)}$.

The induction step replaces a row support at scale $q$ by one at scale
$q/4$.  The inverse column table \eqref{eq:cols} reduces the matrix product
to the parity of the corresponding support intersections.

Assume \eqref{eq:072} for $k$, and suppose
$4^{k+1}\leq P$.  Write $q=4q'$.  Matrix multiplication is performed in
$\mathbb F_2$.  For $0\leq c\leq2q$, define, modulo $2$,
\[
N_c(j)=\sum_{\ell=1}^{3P-1}
\mathbf 1_{\ell\bmod4q\in E_c^{(q)}}
\mathbf 1_{j\bmod P\in E_{\gamma_1(\ell)}^{(P/4)}}.
\]
Here $\mathbf 1_{\mathcal E}$ is $1$ when the statement $\mathcal E$ is true and is
$0$ otherwise.
The first indicator is the support of a row of $M^k$, and the second is
the support of a row of $M$.  We calculate the parity of their
intersection using \eqref{eq:cols}.  Set
\[
S_c=\{\ell:1\leq\ell<3P,\ \ell\bmod4q\in E_c^{(q)}\}.
\]
Since $k\geq1$, the integer $4q$ divides $P$.  Each complete block of
$4q$ consecutive indices contains $|E_c^{(q)}|$ members of $S_c$.
The set $E_c^{(q)}$ contains $0$ and has even cardinality, either $2$ or
$4$.  For the four boundary columns $u=0,Q,2Q,3Q$ in
\eqref{eq:cols}, the intersection counts are, respectively,
\[
\frac{3P}{4q}|E_c^{(q)}|-1,\qquad
\frac{P}{4q}|E_c^{(q)}|-1,\qquad
\frac{P}{4q}|E_c^{(q)}|+1,\qquad
\frac{P}{4q}|E_c^{(q)}|-1.
\]
The signs account for the excluded endpoint $0$ or the two included
endpoints $P,2P$.  All four counts are odd.

For every other column, \eqref{eq:cols} gives a singleton,
$4(j\bmod Q)$ or $2P+4(j\bmod Q)$.  Because $q\mid Q$ and
$4q\mid2P$, this singleton lies in $S_c$ exactly when
$4j\bmod4q\in E_c^{(q)}$.  Equivalently,
\[
 j\bmod q\in
 \{a/4: a\in E_c^{(q)},\ 4\mid a\},
\]
where each element of $E_c^{(q)}$ is represented in
$\{0,1,\ldots,4q-1\}$.  This condition also holds for the four boundary
columns: there $j\bmod q=0$, and the set on the right contains $0$.
Thus it describes $N_c(j)$ for every column.

Recall that $q=4q'$.  Intersecting the five defining types of
$E_c^{(q)}$ with $4\mathbb Z$ and dividing by $4$ now gives the following
six cases; the middle column defines $\delta(c)$:
\[
\begin{array}{c|c|c}
c&\delta(c)&N_c(j)=1\text{ precisely when}\\ \hline
0\text{ or }0<c<q,\ 4\nmid c&0
&j\bmod4q'\in E_0^{(q')}\\
c=4d,\ 1\leq d<q'&d
&j\bmod4q'\in E_d^{(q')}\\
c=q&q'&j\bmod4q'\in E_{q'}^{(q')}\\
c=q+e,\ 1\leq e<q,\ 4\nmid e&2q'
&j\bmod4q'\in E_{2q'}^{(q')}\\
c=q+4d,\ 1\leq d<q'&q'+d
&j\bmod4q'\in E_{q'+d}^{(q')}\\
c=2q&2q'&j\bmod4q'\in E_{2q'}^{(q')}.
\end{array}
\]
For example, when $c=4d$ the residues $0,4d,q,q+4d$ become
$0,d,q',q'+d$, which form $E_d^{(q')}$.  When $0<c<q$ and
$4\nmid c$, only $0,q$ survive the intersection with $4\mathbb Z$,
giving $E_0^{(q')}$.  The middle and upper boundary sets become
$\{0,2q'\}$ and $\{0,3q'\}$, respectively.  These are exactly the
sets in the table.  The definition of
$\gamma_{k+1}$ says exactly that
$\delta(\gamma_k(i))=\gamma_{k+1}(i)$.  Since
\[
(M^{k+1})_{ij}=\sum_{\ell=1}^{3P-1}(M^k)_{i\ell}M_{\ell j},
\]
the table proves \eqref{eq:072} for $k+1$.
\end{proof}

\begin{lemma}\label{lem:20}
For every column $j$ of $M$ one has
\begin{align}
\sum_{\substack{1\leq\ell<3P\\
\ell\equiv0\text{ or }e\pmod4}}M_{\ell j}
&\equiv1\pmod2 &&(e=1,2,3),
\label{eq:075}\\
\sum_{\ell=1}^{3P-1}M_{\ell j}
&\equiv1\pmod2,
&
\sum_{\substack{1\leq\ell<3P\\2\mid\ell}}M_{\ell j}
&\equiv1\pmod2.
\label{eq:076}
\end{align}
\end{lemma}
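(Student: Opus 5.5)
The plan is to read every column sum directly off the inverse column-support table \eqref{eq:cols} from the proof of Lemma~\ref{lem:19}, which records, for each column $j$, the exact set of rows $\ell$ with $M_{\ell j}$ odd. Write $u=j\bmod P$ and $Q=P/4$. Each sum in \eqref{eq:075}--\eqref{eq:076} is, modulo $2$, the number of elements of that support set lying in the prescribed residue classes of $\ell$ modulo $4$. The proof therefore splits into the six rows of \eqref{eq:cols}, and in each row we count.

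For the nonboundary columns, \eqref{eq:cols} gives a single row $\ell=4(u\bmod Q)$ or $\ell=2P+4(u\bmod Q)$. Since $4\mid P$, in either case $\ell\equiv0\pmod4$. This singleton is therefore counted in every sum in \eqref{eq:075}, in the full sum, and in the even sum. Each sum is $1$.

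For the boundary columns we count residues in intervals. Let $S$ be a subset of $\{0,1,2,3\}$ containing $0$, namely $\{0,e\}$, $\{0,1,2,3\}$, or $\{0,2\}$. The interval $\{1,\dots,3P-1\}$ equals $\{0,\dots,3P-1\}$ with $0$ removed. The larger interval splits into complete blocks of $4$ and so contains a multiple of $|S|$ elements whose residue lies in $S$; this count is even because $|S|\in\{2,4\}$. Removing $0$, whose residue lies in $S$, makes the count odd. The same argument covers $\{1,\dots,P-1\}$ and $\{2P+1,\dots,3P-1\}$, which is $\{2P,\dots,3P-1\}$ with $2P\equiv0$ removed. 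For $\{P,\dots,2P\}$, the range $\{P,\dots,2P-1\}$ gives an even count, and adding $2P\equiv0$ makes it odd. Hence every required sum is odd in all four boundary rows, with $u=0,Q,2Q,3Q$.

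The only real obstacle is making sure \eqref{eq:cols} is exactly the mod-$2$ support of the columns. In particular, when $r=2$ we have $Q=1$, so the nonboundary rows are empty and the boundary classes $0,Q,2Q,3Q$ exhaust all residues modulo $P$. I would check this against the $k=1$ table, which the proof of Lemma~\ref{lem:19} establishes for all $r\ge2$. Once that is confirmed, the lemma follows from the parity counts above without further computation.
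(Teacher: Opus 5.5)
Your proposal is correct and follows essentially the same route as the paper. Both arguments read the odd entries of each column off \eqref{eq:cols} and note that the singleton supports are multiples of $4$. They then check that each of the four interval supports contains an odd number of indices in every prescribed set of residue classes. The paper states these counts explicitly as $3P/2-1$, $P/2-1$, $P/2+1$, $P/2-1$ for the residue-class sums and $3P-1$, $P-1$, $P+1$, $P-1$ for the full sums. Your complete-blocks-of-four argument, adjusting for the endpoints $0$, $2P$ or $P,2P$, reproduces exactly these parities.
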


\begin{proof}
By \eqref{eq:cols}, the odd entries in each column have one of the
six supports displayed there.
Every singleton in the last two rows is divisible by $4$.  In each of the
first four rows, the number of indices congruent to $0$ or to a fixed
$e\in\{1,2,3\}$ modulo $4$ is, respectively,
\[
3P/2-1,\qquad P/2-1,\qquad P/2+1,\qquad P/2-1,
\]
and the number of even indices is given by the same four odd integers.
All these numbers are odd because $4\mid P$.  The total number of indices in
the first four rows is, respectively,
$3P-1,P-1,P+1,P-1$, again always odd.  We have therefore proved, for every
column $j$, all three asserted identities.
\end{proof}

\begin{proof}[Proof of Lemma~\ref{lem:12}]
We combine the shrinking support pattern of
Lemma~\ref{lem:19} with the terminal column counts of
Lemma~\ref{lem:20}.

If $r=2a$ is even, take $k=a$ in
\eqref{eq:072}.  Then $q=1$: the lower, middle, and upper rows
are supported, respectively, on the residue pairs
\[
\{0,1\},\qquad\{0,2\},\qquad\{0,3\}\pmod4.
\]
Combining these three row supports with
\eqref{eq:075} shows that every entry of $M^{a+1}$ is
odd.

If $r=2a+1$ is odd, take $k=a$ in
\eqref{eq:072}, so that $q=2$.  One further multiplication gives
\[
(M^{a+1})_{ij}\equiv
\begin{cases}
1,&P\leq i\leq2P,\\
1,&i<P\text{ or }i>2P,\quad j\text{ even},\\
0,&i<P\text{ or }i>2P,\quad j\text{ odd}.
\end{cases}
\pmod2.
\]
This is obtained by inserting the five sets
$E_0^{(2)},\ldots,E_4^{(2)}$ in the matrix product; explicitly, the middle
set $E_2^{(2)}=\{0,4\}$ gives every column, whereas the two lower and two
upper sets give exactly the even columns.  Therefore the middle rows in
the last display, whose support contains all
$\ell$, and the remaining rows, whose support contains exactly the even
$\ell$, all become all-one rows after one more multiplication by $M$.
Thus $M^{a+2}\equiv\mathbf J\pmod2$.

Finally, once $M^k\equiv\mathbf J\pmod2$, the first identity in
\eqref{eq:076} gives
\[
M^{k+1}\equiv\mathbf JM\equiv\mathbf J\pmod2.
\]
Hence the congruence persists for every larger exponent, proving
\eqref{eq:045}.
\end{proof}

\subsubsection{The boundary sums}

\begin{proof}[Proof of Lemma~\ref{lem:13}]
Let
\[
\mathcal W(x)=\sum_{j=1}^{m-1}2^{w_j}x^j.
\]
By the Cauchy product, that is, the usual coefficient convolution,
$\mathcal S(n)$ is the coefficient of $x^n$ in
$\mathcal W(x)T(x)^m$.
By Lemma~\ref{lem:11}, modulo $2$,
\[
\mathcal W(x)
\equiv
\bigl(x+x^3+\cdots+x^{P-1}\bigr)(1+x^{2P}).
\]
Also
\[
T(x)^m\equiv(1+x)^{-m}=(1+x^P)^{-3}\pmod2.
\]
Since
\[
x+x^3+\cdots+x^{P-1}
=\frac{x(1+x^P)}{1+x^2}
\quad\text{in }\mathbb F_2[[x]],
\]
we obtain
\[
\mathcal W(x)T(x)^m
\equiv
\frac{x(1+x^P)(1+x^{2P})}
     {(1+x^2)(1+x^P)^3}
=\frac{x}{1+x^2}
\pmod2.
\]
The coefficient of every positive odd power is therefore $1$, proving the
first congruence.

For the second congruence, keep $\mathcal W$ modulo $4$.  Put
\[
O(x)=x+x^3+\cdots+x^{P-1},\qquad
E(x)=x^2+x^6+\cdots+x^{P-2}.
\]
The weight table gives
\[
\mathcal W(x)
\equiv
\bigl(O(x)+2E(x)\bigr)(1+x^{2P})
+2x^PO(x)
\pmod4.
\]
By \eqref{eq:053},
\[
T(x)^m\equiv(1+x)^{-m}\pmod4.
\]
Furthermore, \eqref{eq:063} gives
\[
(1+x)^{-m}\equiv(1+x^{P/2})^{-6}\pmod4,
\]
which contains only even powers.  Hence the terms involving $O(x)$ do not
contribute to an even coefficient, and
\[
\mathcal S(2q)
\equiv
2[x^{2q}]E(x)(1+x^{2P})(1+x)^{-m}
\pmod4.
\]
Modulo $2$,
\[
E(x)=\frac{x^2(1+x^P)}{1+x^4},
\qquad
(1+x)^{-m}=(1+x^P)^{-3}.
\]
Since $1+x^{2P}=(1+x^P)^2$ over $\mathbb F_2$, the expression after the
factor $2$ reduces to $x^2/(1+x^4)$.  Its coefficient at $x^{2q}$ is $1$
when $q$ is odd.  Thus $\mathcal S(2q)\equiv2\pmod4$.
\end{proof}

\subsection{Automaticity of the odd parts}
\label{sec:06}

We now combine Theorem~\ref{thm:val3} with the common
automaticity tools from Section~\ref{sec:01}.

\begin{retheorem}{thm:odd3}
Let $r\ge2$, put $m=3\cdot2^r$, and let $s\ge1$.  Then the sequence
\[
\bigl(\operatorname{odd}(t_m(n))\bmod2^s\bigr)_{n\ge0}
\]
is $2$-automatic.
\end{retheorem}

\begin{proof}[Proof of Theorem~\ref{thm:odd3}]
Retain the notation
\[
P=2^r,\qquad m=3P,\qquad
b(n)=\nu_2\binom{n+m-1}{m-1}
\]
from Section~\ref{sec:05}.  By
Theorem~\ref{thm:val3},
\begin{equation}\label{eq:077}
\nu_2(t_m(n))=b(n).
\end{equation}

We first dispose of the small-valuation range $b(n)\leq r+2$.  For each
$0\leq e\leq r+2$, the set
\[
E_e=\{n\geq0:\nu_2(t_m(n))=e\}
\]
is $2$-automatic, since membership is determined by
$t_m(n)\bmod2^{e+1}$ and Lemma~\ref{lem:04}.
The residue $t_m(n)\bmod2^{e+s}$ is also automatic.  On $E_e$, division
of this residue by $2^e$ is a well-defined map to $\mathbb Z/2^s\mathbb Z$.
Thus the odd part is automatic on every $E_e$, and
the sets $E_0,\ldots,E_{r+2}$ together with their complement form an
automatic partition.  Applying
Lemma~\ref{lem:06}(2), with an arbitrary constant
output on the complement, gives an automatic sequence agreeing with the
odd parts throughout the small-valuation range.

Suppose from now on that $b(n)\geq r+3$.  Recall the integer matrix
\[
M=\frac14DA^2D^{-1}
\]
from \eqref{eq:044}, and put
\[
Y_v=D\mathcal V_v.
\]
The exact identity \eqref{eq:050} says
\[
DA^{2k}=2^{2k}M^kD
\qquad(k\geq0).
\]

Write $n=Pq+\rho$, where $0\leq\rho<P$.  The construction in the proof of
Theorem~\ref{thm:val3} gives $h,\alpha,u$ such that
\[
n+h=2^\alpha u,\qquad u\text{ is odd},\qquad
b(n)=\alpha-w_h.
\]
If $\rho=0$, write $q=2a$ or $q=2a+1$.  The corresponding choices are
\[
(h,\alpha,u)=
\begin{cases}
(2P,r+1+\nu_2(a+1),\operatorname{odd}(a+1)),&q=2a,\\
(P,r+1+\nu_2(a+1),\operatorname{odd}(a+1)),&q=2a+1.
\end{cases}
\]
If $0<\rho<P$, write $q=4a+c$, where $1\leq c\leq3$. Then
\[
(h,\alpha,u)=
\bigl((4-c)P-\rho,r+2+\nu_2(a+1),\operatorname{odd}(a+1)\bigr).
\]
The case $0<\rho<P$ and $q\equiv0\pmod4$ cannot occur in the present
large-valuation range, by \eqref{eq:048}.

If $\alpha=2k$, then \eqref{eq:050}, applied to the
$h$-th coordinate, gives the exact identity
\begin{equation}\label{eq:079}
\frac{t_m(n)}{2^{b(n)}}=(M^kY_u)_h.
\end{equation}
If $\alpha=2k+1$, apply the same identity to $\mathcal V_{2u}$.  We obtain
\begin{equation}\label{eq:080}
\frac{t_m(n)}{2^{b(n)}}=\frac{(M^kY_{2u})_h}{2}.
\end{equation}
The right-hand side is an odd integer: this follows either from
\eqref{eq:077}, or directly from the final parity
calculation in the proof of Theorem~\ref{thm:val3}.

\smallskip
\noindent\emph{On each residue class.}
Fix $L=s+1$.  Lemma~\ref{lem:04} shows that
$(Y_v\bmod2^L)_{v\geq0}$ is $2$-automatic.  On each residue class displayed
above, the coordinate $h$ is fixed and
\[
\alpha=c_0+\nu_2(a+1),
\qquad
k=\left\lfloor\frac{c_0+\nu_2(a+1)}2\right\rfloor
\]
for a fixed integer $c_0$.  For this fixed $c_0$, the matrix sequence
\[
C_\ell=M^{\lfloor(c_0+\ell)/2\rfloor}\bmod2^L
\qquad(\ell\geq0)
\]
is eventually periodic, because the powers of $M$ modulo $2^L$ are
eventually periodic in a finite matrix ring.  Lemma~\ref{lem:05} shows that
$C_{\nu_2(a+1)}$, $Y_{\operatorname{odd}(a+1)}$, and
$Y_{2\operatorname{odd}(a+1)}$ are $2$-automatic.  Finite direct products
and matrix multiplication therefore show that both vector sequences
appearing on the right of \eqref{eq:079} and \eqref{eq:080} are
$2$-automatic modulo $2^L$.  The
choice between them is automatic as well, because
$c_0+\nu_2(a+1)\pmod2$ is obtained from an eventually periodic sequence by
Lemma~\ref{lem:05}.  Lemma
\ref{lem:06}(2) therefore joins the even- and
odd-$\alpha$ cases.  Extracting the fixed $h$-th coordinate and, in the odd
case, dividing an even residue modulo $2^{s+1}$ by $2$ are maps between
finite sets.  Hence each residue class carries an automatic sequence that
agrees with the odd parts at all large-valuation indices.

\smallskip
\noindent\emph{Combining the residue classes.}
The cases above are residue classes modulo $2P$ or $4P$.  After refining
the classes modulo $2P$ to classes modulo $4P$, Lemma
\ref{lem:06}(1) interleaves the finitely many automatic subsequences.  On
classes containing no large-valuation indices we use an arbitrary constant
output.  The large-valuation set is the complement of
$E_0\cup\cdots\cup E_{r+2}$ and is automatic.  Lemma~\ref{lem:06}(2)
then joins the small- and large-valuation ranges and proves the theorem.
\end{proof}

\section{The case \texorpdfstring{$m=6$}{m=6}}
\label{sec:04}

The same two-step matrix recurrence also treats $m=6$.  Here its first
and last coordinates acquire one additional factor of $2$, which gives
the correction in the valuation formula.

\begin{retheorem}{thm:six}
For every $n\ge0$,
\[
\nu_2(t_6(n))=\nu_2\binom{n+5}{5}
+\mathbf1_{n\equiv3\!\!\pmod4}.
\]
Moreover, for every $s\ge1$, the sequence
$\bigl(\operatorname{odd}(t_6(n))\bmod2^s\bigr)_{n\ge0}$ is $2$-automatic.
\end{retheorem}

\begin{proof}[Proof of Theorem~\ref{thm:six}]
We use $\mathcal V_n$ and $A$ from Lemma~\ref{lem:02}, with $m=6$.
Put
\[
(w_1,w_2,w_3,w_4,w_5)=(0,2,1,2,0),\qquad
D=\operatorname{diag}(1,4,2,4,1),
\]
and write
\[
Y_v=D\mathcal V_v=
\begin{pmatrix}
t_6(v-1)\\4t_6(v-2)\\2t_6(v-3)\\4t_6(v-4)\\t_6(v-5)
\end{pmatrix}.
\]
Direct multiplication gives $4KD=DA^2$, where
\[
K=
\begin{pmatrix}
4&-9&-18&1&0\\
9&29&-33&-15&1\\
-3&3&55&3&-3\\
1&-15&-33&29&9\\
0&1&-18&-9&4
\end{pmatrix}.
\]
Thus Lemma~\ref{lem:03} gives
\begin{equation}\label{eq:m6rec}
2^{2j}K^jY_v=Y_{2^{2j}v}\qquad(j,v\geq0).
\end{equation}
The matrix
\[
H=
\begin{pmatrix}
2&2&2&2&2\\
1&3&3&3&1\\
1&3&3&3&1\\
1&3&3&3&1\\
2&2&2&2&2
\end{pmatrix}
\]
satisfies $K^2\equiv H$ and $HK\equiv H\pmod4$.  Consequently,
\begin{equation}\label{eq:m6stab}
K^j\equiv H\pmod4\qquad(j\geq2).
\end{equation}

We need only one parity property of the vectors $Y_v$.  Modulo $2$,
\[
T(x)^6\equiv\frac1{(1+x^2)(1+x^4)}
\equiv\frac{1+x^2}{1+x^8}\pmod2.
\]
It follows that, for every positive odd integer $u$,
\[
\sum_{i=1}^5(Y_u)_i
\equiv t_6(u-1)+t_6(u-5)\equiv1\pmod2.
\]

Modulo $4$ is enough for the vectors $Y_u$; the vectors $Y_{2u}$
require one additional binary digit, so we use modulo $8$ below.
For any integral vector with odd coordinate sum, its product by $H$ has
odd middle three coordinates and first and last coordinates congruent
to $2$ modulo $4$.  Thus \eqref{eq:m6stab} determines the coordinate
valuations of $K^jY_u$ for $j\geq2$.  To treat $K^jY_{2u}$, note that
$DAD^{-1}$ is integral and that
\[
K^2DAD^{-1}\equiv KDAD^{-1}\pmod8.
\]
Let $e_h$ denote the row vector with its $h$-th entry equal to $1$
and all other entries equal to $0$.  Multiplying this congruence
repeatedly by $K$ and computing its first instance gives
\[
e_hK^jDAD^{-1}\equiv
\begin{cases}
4(1,1,1,1,1),&h=1,5,\\
2(3,1,1,1,3),&h=2,3,4
\end{cases}
\pmod8\qquad(j\geq1).
\]
Since $Y_{2u}=DAD^{-1}Y_u$ and the coordinate sum of $Y_u$ is odd,
these row congruences give valuation $2$ at the first and last
coordinates and valuation $1$ at the middle three coordinates.
Combining both calculations, we have
\begin{equation}\label{eq:m6coord}
\nu_2\bigl((K^jY_{2^\varepsilon u})_h\bigr)
=\varepsilon+\mathbf1_{h\in\{1,5\}}
\end{equation}
for $j\geq1$, $\varepsilon\in\{0,1\}$ and positive odd $u$, except
possibly when $j=1$, $\varepsilon=0$ and $h=1,5$.  Indeed, the
remaining case $j=1$, $\varepsilon=0$, $h=2,3,4$ follows because the
middle three rows of $K$ consist of odd integers.  The two excluded
coordinates will not be needed.

We now locate each coefficient in \eqref{eq:m6rec}.  Define
\[
b_6(n)=\nu_2\binom{n+5}{5}+\mathbf1_{n\equiv3\!\!\pmod4}.
\]
For $q\geq0$, put $a=\nu_2(q+1)$ and
$u=\operatorname{odd}(q+1)$.  The identity
$\binom{n+5}{5}=(n+1)\cdots(n+5)/120$ gives the following table:
\[
\begin{array}{c|c|c|c}
n&h&\alpha&b_6(n)\\ \hline
4q&4&2+a&a\\
4q+1&3&2+a&a+1\\
4q+2&2&2+a&a\\
8q+3&5&3+a&a+4\\
8q+7&1&3+a&a+4
\end{array}
\]
For example, in the row $n=8q+3$, the even factors in the numerator
have valuations $2,1,3+a$, while $\nu_2(120)=3$; the correction
adds $1$.  The other rows follow by the same factorization.  In each
row,
\[
n+h=2^\alpha u,\qquad
b_6(n)=\alpha-w_h+\mathbf1_{h\in\{1,5\}}.
\]
Write $\alpha=2j+\varepsilon$, where $\varepsilon\in\{0,1\}$.
Here $j\geq1$, and when $j=1$, $\varepsilon=0$, only $h=2,3,4$
occur.  Thus \eqref{eq:m6coord} applies in every case.  Taking the
$h$-th coordinate in \eqref{eq:m6rec} yields
\[
t_6(n)=2^{2j-w_h}(K^jY_{2^\varepsilon u})_h.
\]
Its valuation is $b_6(n)$, which proves the first assertion.  We have
also obtained the exact formula
\begin{equation}\label{eq:m6odd}
\operatorname{odd}(t_6(n))
=\frac{(K^jY_{2^\varepsilon u})_h}
{2^{\varepsilon+\mathbf1_{h\in\{1,5\}}}}.
\end{equation}
The divisor on the right is at most $4$.

Fix $s\geq1$.  By Lemma~\ref{lem:04} and closure under fixed shifts
and finite direct products, $(Y_v\bmod2^{s+2})_{v\geq0}$ is
$2$-automatic.  On each row of the table, $\alpha=c+\nu_2(q+1)$
with fixed $c\in\{2,3\}$.  The sequence
\[
\left(K^{\lfloor(c+\ell)/2\rfloor}\bmod2^{s+2},\,
(c+\ell)\bmod2\right)_{\ell\geq0}
\]
is eventually periodic: the powers of $K$ modulo $2^{s+2}$ take
values in a finite matrix ring.  Lemma~\ref{lem:05} therefore shows
that the matrix $K^j$ and the choice of $\varepsilon$, as functions
of $q$, are automatic.  The same lemma applies to
$Y_{\operatorname{odd}(q+1)}$ and $Y_{2\operatorname{odd}(q+1)}$.
Multiplication and selection of the $h$-th coordinate are maps between
finite sets.  Finally, division by the indicated factor of at most $4$
in \eqref{eq:m6odd} determines the result modulo $2^s$ from the
numerator modulo $2^{s+2}$.  Thus each row gives a $2$-automatic
sequence of odd parts.  Refining the first three rows into residue
classes modulo $8$ and applying Lemma~\ref{lem:06} proves the second
assertion.
\end{proof}

\section{Conjectures}
\label{sec:07}

The two conjectures in this section were formulated from the proved cases
and AI-assisted numerical experiments.  ChatGPT (OpenAI) was used in these
computations, and DeepSeek also contributed to the early numerical
exploration.  The computations served to guide the conjectures and are not
used in the proofs of the theorems.

\subsection{Automatic odd parts}
\label{sec:08}

The automaticity results above suggest the following extension to all
positive exponents.  We retain the convention $\operatorname{odd}(0)=0$
from the Introduction.

\begin{samepage}
\begin{conjecture}\label{conj:01}
For every $m\geq1$ and $s\geq1$, the sequence
\[
\bigl(\operatorname{odd}(t_m(n))\bmod2^s\bigr)_{n\geq0}
\]
is $2$-automatic.
\end{conjecture}
\end{samepage}

For $m=3$, some coefficients vanish. Their zero set is described in
\cite[Theorem~3.7]{GawronMiskaUlas} and is $2$-automatic by the recurrences in
\cite[Lemma~3.5 and Proposition~3.6]{GawronMiskaUlas}. Thus the zero
coefficients themselves cause no additional difficulty in Conjecture~\ref{conj:01}.

Lemma~\ref{lem:04} alone does not prove the
conjecture, because removing the full power of $2$ may require knowledge modulo
$2^{\nu_2(t_m(n))+s}$ and the required precision is not bounded in $n$.
Moreover, Appendix~\ref{app:02} shows that the analogous
statement fails for a general automatic coefficient sequence.  Thus the
conjecture depends on the special product structure of the Thue--Morse
series.

\subsection{A criterion for exact binomial valuation}

Let $m$ be a positive even integer, and write
\[
m=2^r u,
\qquad u\text{ odd},
\qquad r=\nu_2(m)\geq1.
\]
For a nonnegative integer $N$, let $\ell(N)$ be the maximum length of a block
of consecutive $1$'s in the binary expansion of $N$, with $\ell(0)=0$.

The valuation theorems suggest that the admissible exponents are
controlled by the binary expansion of the odd factor $u$.  More
precisely, we propose the following necessary and sufficient condition.

\begin{samepage}
\begin{conjecture}
\label{conj:02}
Let $m=2^r u$ be a positive even integer, where $u$ is odd and $r\geq1$.
Then
\[
\nu_2(t_m(n))=\nu_2\binom{n+m-1}{m-1}
\qquad\text{for every }n\geq0
\]
if and only if
\[
r\geq1+\ell\!\left(\frac{u-1}{2}\right).
\]
\end{conjecture}
\end{samepage}

For $u=1$, the criterion is $r\geq1$, in agreement with
Theorem~\ref{thm:pow2}.  For $u=3$, it is $r\geq2$; the positive cases
are Theorem~\ref{thm:val3}, and Theorem~\ref{thm:six} gives the failure
at $r=1$.  In general, the conjecture says that $2^r u$ has the exact
binomial valuation for every $n$ precisely when
\[
r\geq1+\ell\!\left(\frac{u-1}{2}\right).
\]

\section{Conclusion}

We proved exact $2$-adic valuation formulas and automaticity of the odd parts
for two infinite families of exponents and for $m=6$.  The two conjectures
ask whether odd-part automaticity holds for every exponent and which even
exponents satisfy the binomial valuation at every index.  The condition involving the longest block of consecutive $1$'s in the binary
expansion of $(u-1)/2$ may be approachable through the carry description in
\cite{GawronMiskaUlas}.  Appendix~\ref{app:02} shows that the first question
cannot follow from general closure properties of automatic or regular
sequences alone.

\appendix

\section{Odd parts need not be automatic}
\label{app:02}

The author anticipated that a counterexample should exist.  ChatGPT
(OpenAI) found the following example and made the principal contribution to
its proof.  The author checked the proof.

\begin{samepage}
\begin{theorem}\label{thm:09}
For $n\geq0$, let $a_n=-1$ when $n=2^j$ for some $j\geq0$,
and let $a_n=1$ otherwise.  Write
\[
F(x)=\sum_{n\ge0}a_nx^n,
\qquad F(x)^4=\sum_{n\ge0}c(n)x^n.
\]
Then $(a_n)$ is $2$-automatic, every $c(n)$ is nonzero, and
\[
\omega(n)=\operatorname{odd}(c(n))\pmod4
\]
is not $2$-automatic.
\end{theorem}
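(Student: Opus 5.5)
The plan is to write $F$ in closed form and expand the fourth power. Put $G(x)=(1-x)^{-1}$ and $\Pi(x)=\sum_{j\ge0}x^{2^j}$, so that $F=G-2\Pi$ and
\[
F^4=G^4-8G^3\Pi+24G^2\Pi^2-32G\Pi^3+16\Pi^4 .
\]
Hence $c(n)=\binom{n+3}{3}-8\sum_{2^j\le n}\binom{n-2^j+2}{2}+24(\cdots)-\cdots$, where each later term is an explicit sum over tuples of powers of $2$ not exceeding $n$.

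\emph{Automaticity of $(a_n)$.} This is immediate. The set of powers of $2$ is recognised by the automaton that accepts binary words of the form $10^*$, so $(a_n)$ is a map of a $2$-automatic characteristic sequence.

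\emph{Nonvanishing of $c(n)$.} I would argue by size rather than arithmetic. The main term is $\binom{n+3}{3}\sim n^3/6$. Each correction term is $O(n^{3-i}(\log n)^i)$, because the $i$-th term involves $i$ powers of $2$ below $n$ and a coefficient of $G^{4-i}$. So $c(n)>0$ for all $n$ beyond an explicit bound, and the finitely many remaining $n$ are checked directly.

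\emph{Non-automaticity of $\omega$.} I would argue by contradiction. If $\omega$ were $2$-automatic, then for fixed $x,y$ and the family $n_k=2^kx+y$, the sequence $k\mapsto\omega(n_k)$ would be eventually periodic. This follows from the kernel characterisation \cite[Theorem~6.6.2]{AllSh1}, or simply because reading $x0^{k'}y$ through an automaton is eventually periodic in $k$. I would choose a family on which $\binom{n_k+3}{3}$ has valuation growing with $k$. The first candidates are $n_k=2^k-1$, giving $\binom{2^k+2}{3}=2^k(2^k+1)(2^k+2)/6$ of valuation $k$, and $n_k=2^k-3$. On such a family the main term is invisible modulo $2^{v+2}$, where $v$ is the valuation of $c(n_k)$. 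The odd part modulo $4$ is then governed by the correction terms. For $n=2^k-1$ these are explicit expressions in $k$ and $2^k$: for example,
\[
\sum_{j<k}\binom{2^k-2^j+1}{2}
\]
is a combination of geometric sums in $2^j$ and $4^j$ together with a linear term in $k$. Summing all correction terms should give
\[
c(n_k)=P_1(2^k)+k\,P_2(2^k)+k^2P_3(2^k)+\cdots
\]
with rational polynomials $P_i$ having bounded denominators. From this I would extract $\nu_2(c(n_k))$ and the odd part. The expectation is that, once the $2^k$-contributions are pushed to high $2$-adic order, the relevant quantity becomes something like $\operatorname{odd}(\lambda k+\mu)\bmod 4$ for fixed integers $\lambda,\mu$. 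Such a quantity is not eventually periodic in $k$: it depends on $\operatorname{odd}(k+\text{const})\bmod4$, which is $2$-automatic in $k$ but not periodic. This would give the contradiction. If $n_k=2^k-1$ turns out to make the polynomial part dominate, I would move to a two-parameter family such as $n=2^k+2^l-1$ and use the same pumping argument in $k$ with $l$ fixed.

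\emph{Main obstacle.} The hard step is this last explicit computation. One must evaluate $c(n_k)$ modulo $2^{\nu_2(c(n_k))+2}$ exactly, which means tracking every term of the expansion, including the $24G^2\Pi^2$ and $-32G\Pi^3$ sums, to sufficient $2$-adic precision. One must also choose the family so that the $k$-dependence surviving in the odd part is demonstrably non-periodic. I would first compute $c(2^k-1)$ and $c(2^k-3)$ symbolically and check numerically for small $k$ which family exhibits non-periodic behaviour of $\operatorname{odd}(c(n_k))\bmod4$, before fixing the argument.
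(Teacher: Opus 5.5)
Your arguments for the automaticity of $(a_n)$ and for $c(n)\neq0$ match the paper's. The paper likewise expands $(G-2\Pi)^4$, bounds the two negative terms by $O(n^2\log n+\log^3 n)$, and checks the remaining $n$ directly. The gap is in the non-automaticity of $\omega$, which is the real content of the theorem. You fix no family, compute nothing, and the mechanism you expect is not what actually happens. In particular, your first candidate fails. For $n=2^k-1$ and $k\ge5$ one has $[x^n]\Pi^4=0$, $[x^n]\Pi^3/(1-x)=k^3-3k-1$ and $[x^n]\Pi^2/(1-x)^2=k^22^k-2k(2^k-1)$. This gives
\[
3c(2^k-1)=2^kX_k-8\,(12k^3-54k-11)\qquad(X_k\in\mathbb Z).
\]
The cubic $12k^3-54k-11$ is always odd. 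Hence $\nu_2(c(2^k-1))=3$ and $\omega(2^k-1)\equiv2k+1\pmod4$ for all $k\ge5$, which is periodic. So the obstruction is not that ``the polynomial part dominates''. Rather, the polynomial in $k$ that survives modulo $2^{k-O(1)}$ has no $2$-adic zero, so its valuation is bounded and its odd part modulo $4$ is periodic in $k$.

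In general the surviving quantity is a cubic $Q(k)$, not $\lambda k+\mu$. The reason is that the ordered-triple count in $[x^n]\Pi^3/(1-x)$ contributes $k^3$. The cubic coefficient is even, but it matters as soon as $\nu_2(Q(k))$ is large. Your claim that $\operatorname{odd}(\lambda k+\mu)\bmod4$ is not eventually periodic is also false as stated: take $\lambda=2$, $\mu=1$. It needs a $2$-adic zero.

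Two things are therefore missing. First, you need a family with an exact formula $3c(n_K)=2^{K-C}(\text{integer})-2^{e}Q(K)$ in which $Q$ has a simple zero in $\mathbb Z_2$. This must come with a precision condition guaranteeing that the $2^{K-C}$-term does not affect the odd part modulo $4$. Second, you need a Hensel argument for non-periodicity. The paper takes $n=2^K+7$ and obtains $Q(X)=6X^3-36X^2+27X+29$, which satisfies $Q\equiv X+1\pmod 2$ and has odd derivative. It lifts the odd root $a$ modulo $2^{S+2}$ and chooses $K_y\equiv a+2^Sy\pmod{2^{S+2}}$ for $y=1,3$. Then $\nu_2(Q(K_y))=S$, with the distinct odd residues $yQ'(a)\bmod 4$. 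By contrast, $\nu_2(Q(K_y+p))=\nu_2(p)<S$, and the odd parts there are congruent modulo $4$ for both values of $y$. This contradicts the existence of a period $p$.

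Your second candidate can in fact be completed in exactly this way. The same computation gives $3c(2^k-3)=2^kY_k-8\,(12k^3+18k^2-51k-35)$ with $Y_k\in\mathbb Z$ for $k\ge6$. This cubic is again $\equiv X+1\pmod2$ with odd derivative. But this computation and the Hensel step are precisely what your proposal leaves undone.
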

\end{samepage}

\begin{proof}
The powers of $2$ have binary expansions consisting of a single $1$
followed by zeros.  A finite automaton recognizes these positions, so
$(a_n)$ is $2$-automatic. Put $P(x)=\sum_{j\ge0}x^{2^j}$. Since
$F(x)=(1-x)^{-1}-2P(x)$, we have
\begin{equation}\label{eq:081}
F(x)^4=\frac1{(1-x)^4}-\frac{8P(x)}{(1-x)^3}
 +\frac{24P(x)^2}{(1-x)^2}-\frac{32P(x)^3}{1-x}+16P(x)^4.
\end{equation}
For $n\ge256$, let $L=\lfloor\log_2n\rfloor+1$. The two negative terms
in \eqref{eq:081} give the bound
\[
c(n)\ge\frac{n^3}{6}-4Ln(n+1)-32L^3.
\]
Indeed, $P$ has $L$ terms of degree at most $n$, each contributing at most
$\binom{n+1}{2}$ to $[x^n]P(x)/(1-x)^3$, and at most $L^3$ ordered triples
of these terms contribute to $[x^n]P(x)^3/(1-x)$.
For $L\ge9$, the inequality $2^{L-1}\ge28L$ follows by induction from
$L=9$. Thus $L\le n/28$, while $n+1\le9n/8$, and hence
\[
c(n)\ge n^3\left(\frac1{168}-\frac1{686}\right)>0.
\]
For $0\le n<256$, exact multiplication of
\[
\left(\sum_{j=0}^{255}x^j-2\sum_{j=0}^{7}x^{2^j}\right)^4
\]
gives
\[
\min_{0\leq n<256}|c(n)|=1.
\]
This finite calculation proves that the remaining coefficients are nonzero.

For $K\ge4$, set $M=2^K$ and
$Q(X)=6X^3-36X^2+27X+29$. Evaluating \eqref{eq:081} gives
\begin{equation}\label{eq:082}
3c(M+7)=\frac M2\bigl(M^2+(67-24K)M+144K^2-696K+602\bigr)-16Q(K).
\end{equation}
For the terms containing $P$ and $P^2$, sum the corresponding binomial
coefficients over powers of $2$.  The two remaining coefficients are
\[
[x^{M+7}]\frac{P(x)^3}{1-x}=K^3-3K+35,
\qquad [x^{M+7}]P(x)^4=24.
\]
Indeed, among the $K^3$ ordered triples of powers smaller than $M$,
the triples whose sum exceeds $M+7$ are the triple with all entries
$M/2$ and the $3(K-4)$ triples with two entries $M/2$ and the other
entry in $\{8,16,\ldots,M/4\}$.  The triples containing $M$ contribute
$3\cdot8=24$, because the ordered pairs of powers of $2$ with sum at
most $7$ number $8$.  This gives $K^3-1-3(K-4)+24=K^3-3K+35$.
Finally, $M+7$ has four nonzero binary digits, so a representation as a
sum of four powers of $2$ can have no carries.  Its summands are
$M,4,2,1$, giving $4!=24$.  These coefficient extractions yield
\eqref{eq:082}.

Since $Q(K)\equiv2\pmod3$, it is nonzero. If
$s=\nu_2(Q(K))\le K-7$, division of \eqref{eq:082} by $2^{s+4}$ shows that
$\nu_2(c(2^K+7))=s+4$ and
\begin{equation}\label{eq:083}
\omega(2^K+7)\equiv-3\operatorname{odd}(Q(K))\pmod4.
\end{equation}
Here the term containing $M/2$ is divisible by $4$ after division.

Suppose that $\omega$ is $2$-automatic. The binary expansion
$2^K+7=(10^{K-3}111)_2$ shows that $(\omega(2^K+7))_{K\ge4}$ is eventually
periodic: repeated input $0$ eventually cycles among finitely many states.
Let $p$ be a period valid for $K\ge N$, and put $t=\nu_2(p)$.
Choose $S\ge\max\{t+3,3\}$.

There is a unique odd root $a$ of $Q$ modulo $2^{S+2}$. Indeed,
$Q(X)\equiv X+1\pmod2$ and $Q'(x)$ is odd for every integer $x$.
For each root modulo $2^j$, exactly one of $a$ and $a+2^j$ is a root modulo
$2^{j+1}$, by Taylor expansion. This proves the assertion by induction.
For $y=1,3$, choose an integer $K_y\ge\max\{N,S+7\}$ satisfying
\[
K_y\equiv a+2^Sy\pmod{2^{S+2}}.
\]
Taylor expansion now gives
\[
Q(K_y)\equiv2^SyQ'(a)\pmod{2^{S+2}}.
\]
Thus $\nu_2(Q(K_y))=S$, and the two residues
$\operatorname{odd}(Q(K_y))\equiv yQ'(a)\pmod4$ are distinct.

On the other hand,
\[
Q(a+p)-Q(a)=p\bigl(Q'(a)+18(a-2)p+6p^2\bigr),
\]
whose factor in parentheses is odd. Consequently
$\nu_2(Q(K_y+p))=t$. The two arguments $K_y+p$ are congruent modulo
$2^{t+2}$, so their values under $Q$, divided by $2^t$, are congruent
modulo $4$. Therefore $\operatorname{odd}(Q(K_y+p))\pmod4$ is independent
of $y$.
The precision condition in \eqref{eq:083} holds at both $K_y$ and $K_y+p$.
The assumed period would therefore identify the two distinct residues at
$K_y$ with the same residue at $K_y+p$, a contradiction.
\end{proof}

\section{Initial coefficient data for the case $r=2$}
\label{app:r2data}

For the initial case in Lemma~\ref{lem:18}, direct evaluation of the finite
coefficient sum gives
\[
\begin{aligned}
(d_{4q})_{q=0}^{9}
  &=(1,-231,8052,22276,-95634,-95634,22276,8052,-231,1),\\
(d_{4q+1})_{q=0}^{8}
  &=(-12,1056,-528,-69024,-42504,89056,25200,-3168,-76),\\
(d_{4q+2})_{q=0}^{8}
  &=(54,-880,-21912,20976,134596,20976,-21912,-880,54),\\
(d_{4q+3})_{q=0}^{8}
  &=(-76,-3168,25200,89056,-42504,-69024,-528,1056,-12).
\end{aligned}
\]
These values are included only to make the finite initial calculation
checkable; the proof of Lemma~\ref{lem:18} uses their $2$-adic valuations.

\section*{Declaration of generative AI and AI-assisted technologies
in the manuscript preparation process}

The author used ChatGPT (OpenAI) for mathematical exploration, numerical
computation, and assistance with the writing and revision of this paper.
DeepSeek was also used in the early stages of numerical exploration.  The
author provided the core ideas for the main results.  For
Appendix~\ref{app:02}, the author anticipated the existence of a
counterexample; ChatGPT found the counterexample and made the principal
contribution to the proof of the theorem in that appendix.  The conjectures
in Section~\ref{sec:07} were informed by AI-assisted numerical
computations.  These exploratory computations are not used in the proofs of
the theorems.  The author checked all proofs, including the proof in
Appendix~\ref{app:02}, reviewed and revised the AI-assisted content, and
takes full responsibility for the content of this paper.

\end{document}